\documentclass[a4paper,reqno]{amsart}
\usepackage{amsmath}
\usepackage{amsthm}
\usepackage{amssymb}
\usepackage{mathtools}
\usepackage{multirow}
\usepackage{fullpage}
\usepackage{enumitem}
\usepackage{multicol}
\usepackage{longtable}
\usepackage{csquotes}
\newcommand{\II}{I\! I}
\newcommand{\legendre}[2]{\left(\frac{#1}{#2}\right)}
\newcommand{\Z}{\mathbb{Z}}
\newcommand{\Q}{\mathbb{Q}}
\newcommand{\C}{\mathbb{C}}
\newcommand{\R}{\mathbb{R}}
\renewcommand{\P}{\mathbb{P}}

\renewcommand{\H}{\mathbb{H}}
\newcommand{\e}{\mathfrak{e}}
\renewcommand{\Im}{\operatorname{Im}}

\DeclareMathOperator{\sign}{sign}

\DeclareMathOperator{\Aut}{Aut}
\DeclareMathOperator{\SL}{SL}

\DeclareMathOperator{\oddity}{oddity}
\DeclareMathOperator{\Id}{Id}
\DeclareMathOperator{\Co}{Co}

\newtheorem{lemma}{Lemma}[section]
\newtheorem{proposition}[lemma]{Proposition}

\newtheorem{theorem}[lemma]{Theorem}
\newtheorem*{theorem*}{Theorem}

\numberwithin{equation}{section}

\begin{document}
\title{Reflective automorphic forms on lattices of squarefree level}
\author{Moritz Dittmann}
\date{}
\begin{abstract}
We show that there are only finitely many nonconstant reflective automorphic forms $\Psi$ on even lattices of squarefree level splitting two hyperbolic planes and give a complete classification in the case where the zeros of $\Psi$ are simple and $\Psi$ has singular weight.
\end{abstract}
\maketitle
\section{Introduction}
Let $L$ be an even lattice of signature $(n,2)$ with dual lattice $L'$. The Hermitian symmetric domain $\mathcal{H}_L$ associated to $L$ is one of the two connected components of 
\[
\{[Z]\in \P(L\otimes\C) : (Z,Z)=0, (Z,\overline{Z})<0\}.
\]
Let $O(L)^+$ be the subgroup of $O(L)$ that preserves $\mathcal{H}_L$.
Automorphic forms are homogeneous functions on the affine cone over $\mathcal{H}_L$ that are invariant under a finite index subgroup of the group $O(L)^+$ of isometries fixing $\mathcal{H}_L$. One way to obtain such functions is to apply Borcherds' singular theta correspondence (see \cite{BorcherdsGrass}) to a suitable vector-valued modular form $F$. The resulting function $\Psi$ is an automorphic form for a group containing the kernel $\Gamma(L)$ of $O(L)^+\to \Aut(L'/L)$ and has product expansions at the cusps which is why $\Psi$ is also called an automorphic product. The divisor of $\Psi$ is a linear combination of rational quadratic divisors with multiplicities determined by the principal part of $F$. Conversely, Bruinier has shown in \cite{BruinierConverse} that if $L$ splits two hyperbolic planes, then every automorphic form $\Psi$ for $\Gamma(L)$ whose divisor is a linear combination of rational quadratic divisors is an automorphic product.

An automorphic form $\Psi$ is called reflective if its zeros are orthogonal to roots of $L$ and strongly reflective if all these zeros are simple. Such functions play an important part in finding interesting generalized Kac-Moody algebras, as their denominator identities should be reflective automorphic forms (see e.g. \cite{GritsenkoKacMoody}), and in algebraic geometry, as the existence of a reflective modular form of large weight (when compared to the vanishing orders of its zeros) implies that the corresponding modular variety is uniruled (see \cite{GritsenkoUniruled}). For $n\geq 4$ Ma has proven that up to scaling only finitely many lattices  carry a reflective automorphic form with bounded slope (see \cite{Ma}) and that there are only finitely many lattices carrying a $2$-reflective automorphic form (i.e. a reflective automorphic form whose zeros correspond to roots of norm $2$) (see \cite{Ma2}) and Scheithauer obtained partial classification results for strongly reflective automorphic products of singular weight on lattices of squarefree level (see \cite{ScheithauerClass} and \cite{ScheithauerSingular}). 

In the present paper we restrict to the case where the even lattice $L$ has squarefree level $N$ and additionaly assume that $L$ splits $\II_{1,1}\oplus \II_{1,1}(N)$. For such lattices we prove the following result.
\begin{theorem}\label{thm1}
There are only finitely many even lattices $L$ of signature $(n,2), n\geq 4$ and squarefree level $N$ that split $\II_{1,1}\oplus \II_{1,1}(N)$ and carry a nonconstant reflective automorphic form.
\end{theorem}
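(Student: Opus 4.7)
The plan is to realize $\Psi$ as a Borcherds product, translate reflectivity into a finite list of constraints on the principal part of the associated input form, and then exploit the obstruction theorem to bound the lattice invariants $(n,N)$.

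First, since $L$ splits a hyperbolic plane, the converse theorem of Bruinier quoted in the introduction gives $\Psi = \Psi_F$ for a weakly holomorphic vector-valued modular form
$F = \sum_{\gamma \in L'/L}\sum_{m\in\Q} c(\gamma,m)\,q^m\,e_\gamma$ of weight $1-n/2$ for the Weil representation of $L$, with divisor $\sum_{\gamma,\,m<0} c(\gamma,m)\,H(\gamma,m)$ on the standard rational quadratic divisors $H(\gamma,m)$. Reflectivity forces each $(\gamma,m)$ in the support of the principal part to satisfy $\gamma \equiv \alpha \bmod L$ for a root $\alpha$ of $L$ with $\alpha^2 = -2m$. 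Since $L$ has squarefree level $N$, primitive roots have squared norm $2/d$ for some $d\mid N$, so the support of the negative part of $F$ is confined to finitely many $(\gamma,m)$ whose cardinality depends only on the primes dividing $N$.

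Second, by Borcherds' obstruction theorem the principal part of $F$ must be orthogonal to every cusp form of the dual weight $1+n/2$. Pairing instead with the holomorphic Eisenstein series $E$ of weight $1+n/2$ (nonzero because $n\geq 4$), normalized to have constant term $1$ in its zeroth component, yields the identity
\[
\mathrm{wt}(\Psi)=\tfrac{1}{2}c(0,0)=-\tfrac{1}{2}\sum_{\gamma,\,m<0} c(\gamma,m)\,b_E(\gamma,-m),
\]
where $b_E$ denotes the Fourier coefficients of $E$. For lattices of squarefree level these coefficients admit an explicit product expansion over primes of $2N$, the local factor at $p$ being of order $p^{n/2-1}$. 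Because $\Psi$ is nonconstant the right-hand side is nontrivial and each $c(\gamma,m)$ with $m<0$ is a positive integer (being a vanishing order along an irreducible component of the divisor), so the identity becomes a nontrivial lower bound on $\mathrm{wt}(\Psi)$ in terms of the Eisenstein coefficients at reflective vectors.

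Third, I would combine this lower bound with an upper bound on $\mathrm{wt}(\Psi)$ obtained by quasi-pullback. Since $L$ splits $\II_{1,1}\oplus \II_{1,1}(N)$, one may pass to a sublattice of signature $(n-1,2)$ still splitting a hyperbolic plane; the quasi-pullback of $\Psi$ is a nonzero automorphic form whose weight is $\mathrm{wt}(\Psi)$ plus the number of root divisors through the chosen isotropic vector, and Koecher's principle forces the total to stay at most of order $n$. Matching this with the $p^{n/2-1}$-growth of the Eisenstein contribution on the other side shows that the weight identity can only hold for finitely many pairs $(n,N)$. Since in each fixed signature and squarefree level, the genus of an even lattice containing $\II_{1,1}\oplus\II_{1,1}(N)$ leaves only finitely many isomorphism classes, the theorem follows.

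The main technical obstacle will be making the Eisenstein lower bound effective and uniform in both $n$ and $N$ at once. The local densities $b_{E,p}$ depend on the Jordan blocks of the discriminant form $L'/L$ at $p$ and on characters/oddities, and they can oscillate in sign, so one has to isolate a guaranteed positive contribution for each reflective term and propagate it uniformly across the primes $p\mid N$. The squarefree-level assumption is essential here, as it keeps the local structure of $L'/L$ transparent and permits an inductive attack prime by prime, matching each root norm $2/d$ with the corresponding local Eisenstein factor.
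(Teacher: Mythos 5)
Your first step matches the paper, but the core of your argument has a genuine gap at step three: the claimed upper bound on $\mathrm{wt}(\Psi)$ does not exist in the form you invoke. Koecher's principle only asserts holomorphic extension over the boundary; it gives no bound on the weight of an automorphic form. Likewise, the quasi-pullback to a sublattice of signature $(n-1,2)$ \emph{increases} the weight by half the number of roots orthogonal to the chosen vector and produces a (cusp) form of larger weight --- it does not cap the total ``at order $n$''. The finiteness arguments in the literature that do run along your lines (Ma's) need either a bounded-slope hypothesis or Hirzebruch--Mumford volume asymptotics to close this loop, and neither is supplied here. Since your Eisenstein identity only yields a \emph{lower} bound for $\mathrm{wt}(\Psi)=\tfrac12 c(0,0)$ (and even that requires the sign-definiteness of the coefficients $b_E(\gamma,-m)$, which you flag but do not resolve), nothing in the proposal forces $(n,N)$ into a finite set. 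A small additional slip: roots of a lattice of squarefree level $N$ have $\alpha^2=2d$ for $d\mid N$, not $2/d$; the associated principal-part exponents are $m=-1/d$.

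For contrast, the paper avoids Eisenstein series and quasi-pullbacks entirely. It symmetrizes $F$ under $\Aut(L'/L)$ and, using that $N$ is squarefree, writes the symmetrization as the lift $F_{\Gamma_0(N),f,0}$ of a scalar-valued weakly holomorphic form $f$ of weight $k=1-n/2$ on $\Gamma_0(N)$. The hypothesis that $L$ splits $\II_{1,1}(N)$ (which your argument never really uses) guarantees that $L'/L$ contains elements of every order $d\mid N$ and every norm, so every pole of $f$ at every cusp is detected by some component $F_\gamma$; reflectivity then forces $f|_kM_c=O(q^{-1/c'})$ at each cusp $1/c$ of width $c'=N/c$. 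The valence formula for $\Gamma_0(N)$ applied to the nonzero form $f$ of negative weight then gives $\prod_{p\mid N}(p+1)\le -2^{\omega(N)}\cdot 12/k$, which immediately bounds $n\le 26$, $\omega(N)\le 3$, and $N$ explicitly. If you want to repair your approach you would need to import Ma's volume estimates; otherwise the scalar reduction plus valence formula is the efficient route.
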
 
Our result is effective, i.e. we obtain bounds on the level $N$ and the rank $n+2$ of $L$, thus obtaining a finite list of possible candidates for $L$. The idea of the proof is as follows. Suppose $\Psi$ is a nonconstant reflective automorphic form on $L$. By Bruinier's converse theorem above we can assume that $\Psi$ is an automorphic product corresponding to some vector-valued modular form $F$.  Since $L$ splits $\II_{1,1}$, the reflectivity of $\Psi$ can be restated as a condition on the principal part of $F$. For squarefree level $N$ every symmetric weakly holomorphic modular form for the Weil representation and in particular the symmetrization of $F$ is the lift of some scalar-valued weakly holomorphic modular form $f$ for the group $\Gamma_0(N)$. Then the condition on the principal part of $F$ can be restated as conditions on the principal parts of $f$ at all cusps (this uses that $L$ splits $\II_{1,1}(N)$). These conditions are so restrictive that there are only finitely many solutions for the level $N$ and the rank $n+2$.

One way to construct reflective automorphic forms is described in \cite{ScheithauerClass}: The automorphism group of the Leech lattice $\Lambda$ is Conway's group $\Co_0$. Let $g\in \Aut(\Lambda)$ have order $n$. Then the characteristic polynomial of $g$ can be written as $\prod_{k\mid n}(x^k-1)^{b_k}$. We associate to $g$ the eta product
\[
\eta_g(\tau)=\prod_{k\mid n}\eta(k\tau)^{b_k}
\] 
and assume that its level is squarefree. Moreover we assume that the fixpoint lattice $\Lambda^g$ of $g$ is nontrivial. The function $f_g=1/\eta_g$ can be lifted to a vector-valued modular form $F_g=F_{\Gamma_0(N),f_g,0}$ for the Weil representation on the lattice $L=\Lambda^g\oplus \II_{1,1}\oplus \II_{1,1}(N)$. The function $F_g$ can then be lifted to an automorphic product $\Psi_g$ with Borcherds' lift. This automorphic product turns out to be strongly reflective and to have singular weight. Further possible inputs can be obtained by applying Aktin-Lehner involutions to $f_g$ and $L$. With this method one obtains strongly reflective automorphic products of singular weight on the following lattices by lifting the function in the second column first to a vector-valued modular form $F_{\Gamma_0(N),f,0}$ and then to an automorphic product.

\renewcommand{\arraystretch}{1.2}
\begin{table}[h!]
\caption{}
\label{tabIntro}
\begin{tabular}{ccccc}
$L$ & $f$ & \hspace{2cm} & $L$ & $f$ \\ \cline{1-2} \cline{4-5}
$\II_{4,2}(2_{\II}^{+2}3^{+3})$ & $\eta_{1^{-1}2^23^36^{-6}}$ & & $\II_{6,2}(2_{\II}^{-6}5^{-5})$ & $\eta_{1^{2}2^{-3}5^{-2}10^{-1}}$ \\
$\II_{4,2}(2_{\II}^{+2}3^{+3})$ & $-3\eta_{1^32^{-6}3^{-1}6^{2}}$ & &  $\II_{6,2}(2_{\II}^{+2}5^{+5})$ & $\eta_{1^{-3}2^{2}5^{-1}10^{-2}}$  \\
$\II_{4,2}(2_{\II}^{-4}3^{-3})$ & $-2\eta_{1^22^{-1}3^{-6}6^{3}}$ & & $\II_{6,2}(2_{\II}^{+2}5^{+3})$ & $\eta_{1^{-1}2^{-2}5^{-3}10^{2}}$  \\
$\II_{4,2}(2_{\II}^{-4}3^{-3})$ & $6\eta_{1^{-6}2^{3}3^{2}6^{-1}}$ & & $\II_{6,2}(11^{-4})$ & $\eta_{1^{-2}11^{-2}}$ \\
$\II_{4,2}(2_{\II}^{+4}7^{-3})$ & $\eta_{1^{1}2^{-2}7^{1}14^{-2}}$ & & $\II_{6,2}(2_{\II}^{+4}7^{-4})$ & $\eta_{1^{-1}2^{-1}7^{-1}14^{-1}}$ \\
$\II_{4,2}(2_{\II}^{+2}7^{-3})$ & $2\eta_{1^{-2}2^{1}7^{-2}14^{1}}$ & & $\II_{6,2}(3^{+4}5^{-4})$ & $\eta_{1^{-1}3^{-1}5^{-1}15^{-1}}$ \\ \cline{4-5}
$\II_{4,2}(3^{+3}5^{-3})$ & $\eta_{1^{-2}3^{1}5^{1}15^{-2}}$ & & $\II_{8,2}(3^{-7})$ & $\eta_{1^{3}3^{-9}}$ \\
$\II_{4,2}(3^{-3}5^{+3})$ & $-\eta_{1^{1}3^{-2}5^{-2}15^{1}}$ & & $\II_{8,2}(3^{-3})$ & $9\eta_{1^{-9}3^{3}}$ \\
$\II_{4,2}(23^{-3})$ & $\eta_{1^{-1}23^{-1}}$ & & $\II_{8,2}(2_{\II}^{-8}3^{+3})$ & $\eta_{1^{-4}2^{-1}3^46^{-5}}$ \\
$\II_{4,2}(2_{\II}^{+4}3^{-3}5^{+3})$ & $\eta_{1^{-1}3^15^{1}6^{-1}10^{-1}15^{-1}}$ & & $\II_{8,2}(2_{\II}^{-8}3^{+7})$ & $\eta_{1^{4}2^{-5}3^{-4}6^{-1}}$ \\
$\II_{4,2}(2_{\II}^{+4}3^{+3}5^{-3})$ & $\eta_{1^{1}2^{-1}3^{-1}5^{-1}15^{1}30^{-1}}$ & & $\II_{8,2}(2_{\II}^{+2}3^{-7})$ & $\eta_{1^{-5}2^{4}3^{-1}6^{-4}}$ \\
$\II_{4,2}(2_{\II}^{+2}3^{+3}5^{-3})$ & $\eta_{2^{-1}3^{-1}5^{-1}6^{1}10^{1}30^{-1}}$ & & $\II_{8,2}(2_{\II}^{+2}3^{-3})$ & $\eta_{1^{-1}2^{-4}3^{-5}6^{4}}$ \\
$\II_{4,2}(2_{\II}^{+2}3^{-3}5^{+3})$ & $\eta_{1^{-1}2^{1}6^{-1}10^{-1}15^{-1}30^{1}}$ & & $\II_{8,2}(7^{-5})$ & $\eta_{1^{-3}7^{-3}}$ \\ \cline{1-2} \cline{4-5}
$\II_{6,2}(5^{+5})$ & $\eta_{1^{1}5^{-5}}$ & & $\II_{10,2}(2_{\II}^{+10})$ & $\eta_{1^{8}2^{-16}}$ \\
$\II_{6,2}(5^{+3})$ & $5\eta_{1^{-5}5^{1}}$ & & $\II_{10,2}(2_{\II}^{+2})$ & $16\eta_{1^{-16}2^{8}}$ \\
$\II_{6,2}(2_{\II}^{+6}3^{-4})$ & $\eta_{1^{2}2^{-4}3^26^{-4}}$ & & $\II_{10,2}(5^{+6})$ & $5\eta_{1^{-4}5^{-4}}$ \\
$\II_{6,2}(2_{\II}^{+2}3^{-4})$ & $4\eta_{1^{-4}2^{2}3^{-4}6^{2}}$ & & $\II_{10,2}(2_{\II}^{+6}3^{-6})$ & $\eta_{1^{-2}2^{-2}3^{-2}6^{-2}}$  \\ \cline{4-5}
$\II_{6,2}(2_{\II}^{-4}3^{-6})$ & $\eta_{1^{1}2^{1}3^{-3}6^{-3}}$ & & $\II_{14,2}(3^{-8})$ & $\eta_{1^{-6}3^{-6}}$ \\ \cline{4-5}
$\II_{6,2}(2_{\II}^{-4}3^{-2})$ & $3\eta_{1^{-3}2^{-3}3^{1}6^{1}}$   & & $\II_{18,2}(2_{\II}^{+10})$ & $\eta_{1^{-8}2^{-8}}$ \\ \cline{4-5}
$\II_{6,2}(2_{\II}^{-6}5^{-3})$ & $\eta_{1^{-2}2^{-1}5^{2}10^{-3}}$  & & $\II_{26,2}$ & $\eta_{1^{-24}}$
\end{tabular}
\end{table}
By working out the obstruction space for each of the finitely many lattices from Theorem \ref{thm1} we can now prove the following.
\begin{theorem}\label{thm2}
Let $L$ be an even lattice of signature $(n,2)$, $n\geq 4$ and squarefree level $N$ that splits $\II_{1,1}\oplus \II_{1,1}(N)$ and $\Psi$ a strongly reflective automorphic form of singular weight on the corresponding hermitian symmetric domain. Then $\Psi$ is the theta lift of $F_{\Gamma_0(N),f,0}$ with $f$ one of the functions from the table above. In particular, all $\Psi$ can be realized as the theta lift of a symmetric form $F$.
\end{theorem}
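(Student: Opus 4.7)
The plan is to combine Theorem \ref{thm1} with a case-by-case analysis of the explicit finite list of candidate lattices produced by its effective bounds.

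By Theorem \ref{thm1} only finitely many lattices $L$ of squarefree level $N$ splitting $\II_{1,1}\oplus\II_{1,1}(N)$ can carry a nonconstant reflective form, and the proof of Theorem \ref{thm1} comes with explicit bounds on $N$ and $n$ that make this list effective. For such $L$, Bruinier's converse theorem lets me write any $\Psi$ as the Borcherds lift of a weakly holomorphic vector-valued modular form $F$ of weight $(2-n)/2$ for the Weil representation $\rho_L$. Strong reflectivity forces the principal part of $F$ to be supported on root classes with coefficient $1$ per orbit, and the singular weight hypothesis pins the constant term $c(0,0)=n-2$. Hence once $L$ is fixed, only a short explicit list of admissible principal parts $P$ remains.

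Next I would exploit that $N$ is squarefree: the symmetrization of $F$ (which has the same divisor as $F$, so produces the same $\Psi$ up to a character) is of the form $F_{\Gamma_0(N),f,0}$ for a scalar-valued $f\in M^!_{(2-n)/2}(\Gamma_0(N))$. Because $L$ splits $\II_{1,1}(N)$, the reflectivity conditions on $P$ descend to an explicit prescription for the principal parts of $f$ at \emph{all} cusps of $\Gamma_0(N)$, cusp by cusp. The existence and uniqueness of such an $f$ is controlled by the obstruction space: the space of holomorphic cusp forms of weight $(n+2)/2$ for $\Gamma_0(N)$ pairs with principal parts via the constant-term pairing, and $f$ with the prescribed principal parts exists iff the prescription is orthogonal to this cusp form space, and is unique iff there are no nonzero holomorphic forms of weight $(2-n)/2$ on $\Gamma_0(N)$ (which holds automatically in our weights once $n\geq 4$ and the constant term is fixed).

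I would then run through the finite list: for each $L$ compute its discriminant form and root orbits, translate strong reflectivity and singular weight into a prescription of principal parts at the cusps of $\Gamma_0(N)$, compute a basis of the obstruction cusp form space of weight $(n+2)/2$ on $\Gamma_0(N)$, and pair. Either the prescription fails the obstruction and the lattice is eliminated, or $f$ exists and is uniquely determined; in the latter case one then recognizes $f$ as (a scalar multiple of, resp.\ an Atkin--Lehner transform of) one of the eta products from Table \ref{tabIntro}. The main obstacle is the bookkeeping rather than any single conceptual difficulty: enumerating the root orbits in each discriminant group, correctly transporting the principal-part datum through the splitting $L\cong L_0\oplus\II_{1,1}\oplus\II_{1,1}(N)$ to each cusp, and evaluating the pairing against an explicit basis of cusp forms at each level $N$. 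The closing statement that $\Psi$ always arises from a symmetric $F$ is then automatic, since $F_{\Gamma_0(N),f,0}$ is symmetric by construction.
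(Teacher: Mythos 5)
Your outline of the symmetric case matches the paper: restrict to the finite list from Theorem \ref{thm1}, write the symmetric $F$ as $F_{\Gamma_0(N),f,0}$, and constrain the principal part of $f$ at every cusp by pairing against the obstruction space of weight $1+n/2$ (the paper's conditions B and C, the latter being the Eisenstein/constant-term condition that uses $[F_0](0)=n-2$). That part is sound.

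The genuine gap is your reduction to the symmetric case. You assert that the symmetrization $\frac{1}{|\Aut(D)|}\sum_{\sigma}\sigma(F)$ \enquote{has the same divisor as $F$, so produces the same $\Psi$ up to a character.} This is false. The automorphisms of $D$ do not in general lift to isometries of $L$, so $\sigma(F)$ and $F$ have genuinely different Borcherds divisors; the lift of the symmetrization has as divisor the $\Aut(D)$-average of the divisors, with multiplicities $\tilde c_d=c_d/N(D_d,1)$ that are typically not integers. In the paper's language the symmetrization of a strongly reflective $F$ is only \emph{semi-reflective}, and its lift is not $\Psi$. Consequently your argument only classifies the symmetric inputs and silently discards the possibility that $\Psi$ arises from a non-symmetric $F$; your closing remark that symmetry is \enquote{automatic} is circular, since it presupposes the reduction you have not justified.

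Handling the non-symmetric case is where most of the work in the paper lies. One first uses conditions A, B, C on the symmetrization to pin down the possible multiplicity vectors $(c_d)$, but this is not enough: one must then pair the \emph{individual components} $F_\gamma$ (which are modular for $\Gamma_1(N)$ with character $\chi_\gamma$) against explicit eta-quotient cusp forms for $\Gamma_1(N)$, extracting finer invariants such as $a_\gamma^d=|M_d\cap\gamma^\perp|$ and the projection counts $N_\gamma^{d,e}$. These either produce contradictions (non-integral or negative counts) or force enough rigidity to exhibit an isotropic subgroup $H\subset D$ with $F$ the lift of a symmetric strongly reflective form on $H^\perp/H$; only then does one conclude via the identification of the two theta lifts under $N=M\oplus\II_{1,1}(m)\hookrightarrow L$. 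Your proposal contains no mechanism that sees beyond the $\Aut(D)$-orbit averages, so it cannot rule out non-symmetric $F$ nor show that the surviving ones are lifts of symmetric forms.
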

We remark that with the methods used in the present paper one could also obtain a complete list of reflective automorphic forms without the assumptions on its weight and on the order of its zeros, however, due to the number of cases it does not seem feasible to do this without a computer.

The paper is structured as follows: In Sections 2 and 3 we recall the necessary material on lattices, discriminant forms and the Weil representation. In Section 4 we recall Borcherds' singular theta correspondence, define reflective automorphic forms and prove Theorem \ref{thm1}. In Sections 5 and 6 we summarize how one can compute the Fourier expansions of certain cusp forms at various cusps. This is done for newforms for $\Gamma_0(N)$ and images of these in higher levels in Section 5 and for eta quotients for $\Gamma_1(N)$ in Section 6. We then use these expansions to prove Theorem \ref{thm2} in Section 7.

I thank Nils Scheithauer for helpful discussions on the content of this paper.

\section{Lattices and discriminant forms}
A finite abelian group $D$ with a $\Q/\Z$-valued non-degenerate quadratic form $D\to \Q/\Z, \gamma\mapsto \gamma^2/2$ is called a discriminant form. Every discriminant form decomposes into a sum of Jordan components (not uniquely if $p=2$) and every Jordan component is a sum of indecomposable Jordan components. The possible non-trivial Jordan components are as follows (for details consult \cite{ConwaySloane} or \cite{Nikulin}). 

Let $q>1$ be a power of an odd prime $p$. The non-trivial $p$-adic Jordan components of exponent $q$ are $q^{\pm n}$ for $n\geq 1$. The indecomposable Jordan components are $q^{\pm 1}$, generated by an element $\gamma$ with $q\gamma=0$ and $\gamma^2/2=a/q\mod 1$ where $a$ is an integer with $\legendre{2a}{q}=\pm 1$. The component $q^{\pm n}$ is a sum of $n$ copies of $q^{+1}$ and $q^{-1}$ where the number of copies of $q^{-1}$ is even if $\pm n = +n$ and odd if $\pm n = -n$. 

If $q>1$ is a power of $2$, then the even $2$-adic Jordan components of exponent $q$ are $q_{\II}^{\pm 2n}$ for $n\geq 1$. The indecomposable Jordan components are $q_{\II}^{\pm 2}$, which are generated by two elements $\gamma,\delta$ with $q\gamma=q\delta=0$, $\gamma\delta=1/q\mod 1$ and $\gamma^2/2=\delta^2/2=0\mod 1$ if $\pm=+$ and  $\gamma^2/2=\delta^2/2=1/q\mod 1$ if $\pm=-$. 
 
If $q>1$ is a power of $2$, then the odd $2$-adic Jordan components of exponent $q$ are $q_t^{\pm n}$ for $n\geq 1$ and $t\in\Z/8\Z$ with $\legendre{t}{2}=\pm 1$ if $n=1$, $t=-2,0,2$ if $n=2$ and $\pm= +$, $t=-2,2,4$ if $n=2$ and $\pm = -$, and $t=n\mod 2$ for any $n$. The indecomposable Jordan components are $q^{\pm 1}_t$ where $\legendre{t}{2}=\pm 1$, generated by an element $\gamma$ with $q\gamma=0$ and $\gamma^2/2=t/2q\mod 1$ (with some of them being isomorphic).  

The sum of two Jordan components with the same prime power $q$ can be determined by adding the ranks, multiplying the signs in the exponent and adding the subscripts $t$ if there are any.

The level of a discriminant form $D$ is the smallest positive integer $N$ with $N\gamma^2/2\in \Z$ for all $\gamma\in D$ and we define the signature $\sign(D)\in\Z/8\Z$ of $D$ by
\[
\sum_{\gamma\in D}e(\gamma^2/2)=\sqrt{|D|}e(\sign(D)/8),
\] 
where $e(z)=\exp(2\pi i z)$.

The signature of $q^{\pm n}$ for odd $q$ is given by $-n(q-1)+4k$ where $k=1$ if $q$ is not a square and $\pm n=-n$, otherwise $k=0$. For even $q$, the signature of $q_{\II}^{\pm 2n}$ is $4k$, whereas for $q_t^{\pm n}$ it is $t+4k$, where in both cases $k$ is as before.

For a discriminant form $D$ we define $D_c$ to be the elements $\gamma\in D$ with $c\gamma=0$, i.e. the elements of order dividing $c$. We define $D^c$ to be $\{\gamma\in D : \gamma=c\delta \text{ for some } \delta\in D\}$, i.e. the set of $c$-th powers in $D$. Then
\[
0\rightarrow D_c\rightarrow D\rightarrow D^c\rightarrow 0
\]
is exact and $D_c$ is the orthogonal complement of $D^c$. 

The number of elements of a given norm in the Jordan components of prime order have been determined by Scheithauer (assuming that $p^{\epsilon n}$ is even $2$-adic if $p=2$).

\begin{proposition}[\cite{ScheithauerClass}, Proposition 3.1]\label{prop:number2}
The number of elements in $2_{\II}^{\epsilon n}$ of norm $j/2\mod 1$ is
\[
N(2_{\II}^{\epsilon n},j)=\begin{cases} 2^{n-1}-\epsilon 2^{(n-2)/2} &\text{ if } j\neq 0, \\
2^{n-1}+\epsilon 2^{(n-2)/2} &\text{ if } j=0.\end{cases}
\]
\end{proposition}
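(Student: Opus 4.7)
The plan is to use the signature formula directly. Every element $\gamma$ in the even $2$-adic Jordan component $2_{\II}^{\epsilon n}$ satisfies $2\gamma=0$, so $\gamma^2/2 \in \{0,1/2\}\bmod 1$. Write $a=N(2_{\II}^{\epsilon n},0)$ and $b=N(2_{\II}^{\epsilon n},1)$. Then I have two linear relations: the obvious
\[
a+b = |2_{\II}^{\epsilon n}| = 2^n,
\]
and, since $e(0)=1$ and $e(1/2)=-1$,
\[
a-b = \sum_{\gamma\in 2_{\II}^{\epsilon n}} e(\gamma^2/2).
\]

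Next I would evaluate the right-hand side with the signature identity from the excerpt, $\sum_{\gamma} e(\gamma^2/2) = \sqrt{|D|}\,e(\sign(D)/8)$. According to the signature rules recalled above, $\sign(2_{\II}^{+2n})=0$ and $\sign(2_{\II}^{-2n})=4\bmod 8$ (since $q=2$ is not a square), so $e(\sign(2_{\II}^{\epsilon n})/8)=\epsilon$ (viewing $\epsilon\in\{+1,-1\}$). Hence
\[
a-b = \epsilon\, 2^{n/2}.
\]
(As a sanity check, the explicit generators of $2_{\II}^{\pm 2}$ given in the excerpt yield $(a,b)=(3,1)$ and $(1,3)$ respectively, in agreement with the formula for $n=2$.)

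Solving the two linear equations gives $a = 2^{n-1}+\epsilon 2^{(n-2)/2}$ and $b = 2^{n-1}-\epsilon 2^{(n-2)/2}$, which is exactly the claim. The only conceptual step is the correct determination of $\sign(2_{\II}^{\epsilon n})$; the rest is a two-by-two linear system. Since the signature rules have already been stated in the text, there is no real obstacle, and the argument is essentially a one-line application of the Gauss sum evaluation built into the definition of $\sign(D)$.
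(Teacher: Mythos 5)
Your argument is correct: the two linear relations $a+b=2^n$ and $a-b=\sqrt{|D|}\,e(\sign(D)/8)=\epsilon\,2^{n/2}$ (using the stated signature values $\sign(2_{\II}^{+n})=0$, $\sign(2_{\II}^{-n})=4$) immediately give the claimed counts, and your $n=2$ sanity check is right. The paper itself only cites this result from \cite{ScheithauerClass} without reproducing a proof, and the Gauss-sum argument you give is the standard one used there, so there is nothing further to compare.
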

\begin{proposition}[\cite{ScheithauerClass}, Proposition 3.2]\label{prop:numberp}
Let $p$ be an odd prime. Then the number of elements in $p^{\epsilon n}$ of norm $j/p\mod 1$ is given by
\[
N(p^{\epsilon n},j)=\begin{cases}p^{n-1}-\epsilon\legendre{-1}{p}^{n/2} p^{(n-2)/2} &\text{ if $n$ is even and } j\neq 0, \\
p^{n-1}+\epsilon\legendre{-1}{p}^{n/2}\left(p^{n/2}-p^{(n-2)/2}\right) &\text{ if $n$ is even and } j= 0, \\
p^{n-1}+\epsilon\legendre{-1}{p}^{(n-1)/2}\legendre{2}{p}\legendre{j}{p} p^{(n-1)/2} &\text{ if $n$ is odd and } j\neq 0, \\
p^{n-1} &\text{ if $n$ is odd and } j= 0.
\end{cases}
\]
\end{proposition}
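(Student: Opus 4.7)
The plan is to reduce the count to a standard Gauss-sum evaluation. First I would fix a coordinate description of $p^{\epsilon n}$: by the preceding discussion it decomposes as an orthogonal sum of $n$ indecomposable components $p^{\pm 1}$ with generators $e_i$ satisfying $e_i^2/2 \equiv a_i/p \bmod 1$, and the overall sign is $\epsilon = \prod_i \legendre{2a_i}{p}$. Writing a general element as $x = \sum_i x_i e_i$ with $x_i \in \Z/p\Z$, the condition $x^2/2 \equiv j/p \bmod 1$ becomes $\sum_i a_i x_i^2 \equiv j \pmod{p}$.

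Next, I would detect this congruence with the additive character $\chi(t) = e(t/p)$ on $\Z/p\Z$:
\[
N(p^{\epsilon n}, j) = \frac{1}{p}\sum_{t \in \Z/p\Z} \chi(-tj) \prod_{i=1}^{n} \sum_{x_i \in \Z/p\Z} \chi(t a_i x_i^2).
\]
The contribution of $t=0$ is $p^{n-1}$. For $t \neq 0$ each inner sum is the classical quadratic Gauss sum $\sum_x \chi(ta_ix^2) = \legendre{ta_i}{p} G_p$, where $G_p = \sum_x \chi(x^2)$ satisfies $G_p^2 = \legendre{-1}{p} p$. Multiplying across $i$ collects a factor $\legendre{t}{p}^n \legendre{2}{p}^n\,\epsilon\, G_p^n$, and everything reduces to evaluating $\sum_{t \neq 0} \chi(-tj) \legendre{t}{p}^n$ together with the known value of $G_p^n$.

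Then I would split into cases by the parity of $n$. For $n$ even, $\legendre{t}{p}^n$ and $\legendre{2}{p}^n$ are trivial, $G_p^n = \legendre{-1}{p}^{n/2} p^{n/2}$, and $\sum_{t \neq 0}\chi(-tj)$ equals $p-1$ for $j=0$ and $-1$ otherwise; this immediately gives the first two formulas. For $n$ odd and $j=0$, $\sum_{t \neq 0}\legendre{t}{p} = 0$ kills the contribution and only the $p^{n-1}$ term survives. For $n$ odd and $j \neq 0$, the substitution $s=-tj$ yields $\sum_{t \neq 0}\chi(-tj)\legendre{t}{p} = \legendre{-j}{p}G_p$, which pairs with the remaining $G_p$ inside $G_p^n$ to replace one more $G_p^2$ by $\legendre{-1}{p}p$; collecting the resulting Legendre symbols (using $\legendre{-1}{p}^2 = 1$ and $\legendre{-j}{p}=\legendre{-1}{p}\legendre{j}{p}$) delivers the stated formula.

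The only genuine difficulty is bookkeeping. One must track the sign $\epsilon$ carefully, since it arises from $\prod_i \legendre{2a_i}{p}$ rather than $\prod_i \legendre{a_i}{p}$, and one must correctly accumulate the factors of $\legendre{-1}{p}$ and $\legendre{2}{p}$ contributed by the successive powers of $G_p$. Beyond this, each case is a routine character-sum manipulation.
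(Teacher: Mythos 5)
The paper does not prove this proposition; it is quoted verbatim from \cite{ScheithauerClass} (Proposition 3.2), so there is no internal proof to compare against. Your Gauss-sum argument is correct and complete: the reduction of $\epsilon$ to $\prod_i\legendre{2a_i}{p}$, hence $\prod_i\legendre{a_i}{p}=\legendre{2}{p}^n\epsilon$, is right, and in each of the four cases the character-sum evaluation reproduces the stated formula (in the odd-$n$, $j\neq 0$ case your bookkeeping yields the exponent $(n+3)/2$ on $\legendre{-1}{p}$, which agrees with the stated $(n-1)/2$ since they differ by $2$). This is the standard way to prove such counting formulas and there is nothing to object to.
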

Now suppose that $D$ has squarefree level $N$. Then the Jordan components of $D$ are of the form $p^{\epsilon_p n_p}$ for primes $p\mid N$ and there can be no odd $2$-adic Jordan components. Using the last two propositions one can then determine the numbers of elements of any given norm and order in $D$ as follows.
\begin{proposition}[\cite{ScheithauerClass}, Proposition 3.3]
Let $D$ be a discriminant form of squarefree level. Let $c\mid N$. Then the number of elements in $D_c$ of norm $j/c\mod 1$ is given by
\[
N(D_c,j)=\prod_{p\mid c}N(p^{\epsilon_p n_p},cj/p).
\]
\end{proposition}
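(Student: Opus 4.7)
The plan is to reduce to the individual Jordan components via the Chinese Remainder Theorem. Since $D$ has squarefree level $N$, each Jordan component has prime exponent and there are no odd $2$-adic components, so $D \cong \bigoplus_{p \mid N} p^{\epsilon_p n_p}$. Writing $x \in D$ as $(x_p)_{p \mid N}$ in this decomposition, the equation $cx = 0$ amounts to $cx_p = 0$ for every $p$; for $p \nmid c$ this forces $x_p = 0$ (since $\gcd(c,p)=1$ and $x_p$ has order dividing $p$), while for $p \mid c$ it is automatic because $p^{\epsilon_p n_p}$ has exponent $p$. Hence $D_c \cong \bigoplus_{p \mid c} p^{\epsilon_p n_p}$ as quadratic modules, and the counting problem splits.

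Next I would run CRT on the norm. The quadratic form decomposes orthogonally along the direct sum, so for $x = (x_p)_{p \mid c}$ the norm is $x^2/2 \equiv \sum_{p \mid c} x_p^2/2 \pmod 1$, with each summand in $\frac{1}{p}\Z/\Z$. Setting $x_p^2/2 \equiv a_p/p \pmod 1$, the condition $x^2/2 \equiv j/c \pmod 1$ becomes $\sum_{p \mid c} a_p(c/p) \equiv j \pmod c$. By the Chinese Remainder Theorem this is equivalent to the system $a_p(c/p) \equiv j \pmod p$ for each $p \mid c$, i.e. $a_p \equiv (c/p)^{-1} j \pmod p$. Since the contributions to the count over different primes are independent and $N(p^{\epsilon_p n_p}, a_p)$ counts the elements $x_p$ with $x_p^2/2 \equiv a_p/p \pmod 1$, one obtains $N(D_c,j) = \prod_{p \mid c} N(p^{\epsilon_p n_p}, a_p)$ with $a_p$ as just described.

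It remains to reconcile the natural residue $a_p = (c/p)^{-1} j$ with the expression $cj/p = (c/p)\,j$ appearing in the statement. These two integers mod $p$ differ by the factor $(c/p)^{-2}$, which is a nonzero square modulo $p$. Inspection of Propositions \ref{prop:number2} and \ref{prop:numberp} shows that $N(p^{\epsilon n}, k)$ depends on $k$ only through whether $k \equiv 0 \pmod p$ and, in the case of odd $p$ with $n$ odd, through the Legendre symbol $\legendre{k}{p}$; both invariants are preserved under multiplication by a nonzero square. Therefore $N(p^{\epsilon_p n_p}, a_p) = N(p^{\epsilon_p n_p}, cj/p)$ and the asserted product formula follows.

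I anticipate no real obstacle in this argument: the Jordan splitting and the CRT step are purely structural, and the only step that is not pure bookkeeping is the square-invariance of the counting function in the final paragraph, which is a one-line inspection of the two earlier propositions.
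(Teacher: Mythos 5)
Your argument is correct: the splitting $D_c \cong \bigoplus_{p\mid c} p^{\epsilon_p n_p}$ coming from the squarefree level, the CRT reduction of the norm condition to $a_p \equiv (c/p)^{-1}j \pmod p$, and the observation that $N(p^{\epsilon_p n_p},\cdot)$ from Propositions \ref{prop:number2} and \ref{prop:numberp} is invariant under multiplication by nonzero squares modulo $p$ together yield exactly the stated product formula. The paper imports this proposition from \cite{ScheithauerClass} without reproducing a proof, and your Jordan-decomposition/CRT computation is essentially the argument given there.
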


If $L$ is an even lattice of signature $(b^+,b^-)$ with dual lattice $L'$, then $L'/L$ is a discriminant form with the quadratic form given by reducing the quadratic form on $L'$ modulo $1$ and conversely every discriminant form can be obtained in this way. The signature of $L'/L$ is $b^+-b^-\mod 8$ by Milgram's formula and we define the level of $L$ to be the level of $L'/L$.

For a positive integer $a$ we let $\II_{1,1}(a)$ be the lattice $\Z^2$ with Gram matrix $\begin{psmallmatrix} 0 & a \\ a & 0\end{psmallmatrix}$. If $a=1$, then we just write $\II_{1,1}$. Later we will only consider even lattices $L$ of squarefree level $N$ of the form $L=K\oplus\II_{1,1}\oplus \II_{1,1}(N)$ (i.e. $L$ splits $\II_{1,1}\oplus \II_{1,1}(N)$). We now give conditions on $D=L'/L$ for this to be satisfied.  
\begin{proposition}
Let $N$ be a squarefree positive integer, $L=\II_{1,1}(N)$ and $D=L'/L$. Then $D=\prod_{p\mid N} p^{\epsilon_pn_p}$ with $n_p=2$ and $\epsilon_p=\legendre{-1}{p}$ for all $p$.
\end{proposition}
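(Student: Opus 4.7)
The plan is to compute $D=L'/L$ directly from a Gram matrix for $L=\II_{1,1}(N)$, decompose $D$ via the Chinese Remainder Theorem into its $p$-parts for $p\mid N$, and then identify each $p$-part as a standard Jordan component.

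In the basis $e_1,e_2$ with $(e_i,e_i)=0$ and $(e_1,e_2)=N$, the dual lattice is $L'=\tfrac{1}{N}\Z e_1\oplus\tfrac{1}{N}\Z e_2$, so $D\cong(\Z/N\Z)^2$ with quadratic form $(a,b)\mapsto ab/N\bmod 1$. Since $N$ is squarefree, the CRT decomposition yields an orthogonal decomposition $D=\bigoplus_{p\mid N}D_p$, where each $D_p$ is a copy of $(\Z/p\Z)^2$ with quadratic form $(a,b)\mapsto ab/p\bmod 1$.

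It then remains to identify each $D_p$ as a Jordan component $p^{\epsilon_p n_p}$. For $p=2$ the generators $\gamma_1=(1,0)$ and $\gamma_2=(0,1)$ satisfy $\gamma_i^2/2=0$ and $\gamma_1\gamma_2=1/2\bmod 1$, which by the definition of the even $2$-adic components is exactly $2_{\II}^{+2}$. For odd $p$ the change of basis $\alpha=\gamma_1+\gamma_2$, $\beta=\gamma_1-\gamma_2$ (invertible mod $p$) diagonalizes the form, giving $\alpha^2/2=1/p$ and $\beta^2/2=-1/p$. By the criterion recalled in the excerpt, $\langle\alpha\rangle$ is indecomposable of sign $\legendre{2}{p}$ and $\langle\beta\rangle$ is indecomposable of sign $\legendre{-2}{p}$. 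Taking the orthogonal sum gives $D_p=p^{\epsilon_p\cdot 2}$ with
\[
\epsilon_p=\legendre{2}{p}\legendre{-2}{p}=\legendre{-4}{p}=\legendre{-1}{p}.
\]

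Reassembling the $p$-parts yields the claimed decomposition with $n_p=2$ and $\epsilon_p=\legendre{-1}{p}$ for all $p\mid N$ (at $p=2$ one reads $\legendre{-1}{2}=1$ in the Kronecker sense, consistent with the direct identification $D_2\cong 2_{\II}^{+2}$). There is no serious obstacle in this argument; the only point requiring attention is the sign convention for indecomposable Jordan components, which can be cross-checked against Milgram's formula, since $\II_{1,1}(N)$ has signature $0$ and a direct computation using the signature formulas recalled earlier shows $\prod_{p\mid N}p^{\legendre{-1}{p}\cdot 2}$ also has $\sign\equiv 0\pmod 8$.
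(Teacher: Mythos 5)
Your proof is correct, but it takes a different route from the paper's. The paper also begins by observing $L'=(\tfrac1N\Z)^2$, so $D\cong(\Z/N\Z)^2$ and $n_p=2$; but it then determines $\epsilon_p$ indirectly: the element $(1/p,0)+L$ is a nontrivial isotropic element of order $p$, and by the counting formulas of Propositions 3.1--3.2 of \cite{ScheithauerClass} (restated as Propositions 2.1 and 2.2 here) the component $p^{\epsilon_p 2}$ contains a nontrivial isotropic element only when $\epsilon_p=\legendre{-1}{p}$. You instead identify the Jordan components explicitly: for $p=2$ by matching the generators against the defining relations of $2_{\II}^{+2}$, and for odd $p$ by the orthogonal diagonalization $\alpha=\gamma_1+\gamma_2$, $\beta=\gamma_1-\gamma_2$ (valid since $2$ is invertible mod $p$), giving $\legendre{2}{p}\legendre{-2}{p}=\legendre{-1}{p}$. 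Your computation checks out, including the sign conventions for the indecomposable components $q^{\pm1}_{}$ and the orthogonality $\alpha\beta=\gamma_1^2-\gamma_2^2=0$. What each approach buys: yours is self-contained and pins down the isomorphism type constructively without invoking the element-counting propositions; the paper's is shorter in context because those propositions are already on the table and are used repeatedly elsewhere, and it avoids any basis manipulation. Your closing Milgram cross-check is a legitimate consistency check but, as you implicitly acknowledge, could not by itself determine the individual $\epsilon_p$, only their combined contribution to the signature.
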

\begin{proof}
Note that $L'=\left(\frac{1}{N}\Z\right)^2$. Therefore $L'/L=\left(\Z/N\Z\right)^2$ which proves that $n_p=2$ for all $p\mid N$. We fix a prime $p\mid N$. Then $\gamma=(1/p,0)+L\in D$ is an isotropic element of order $p$. In particular $D_p=p^{\epsilon_p2}$ has nontrivial isotropic elements. By Propositions \ref{prop:number2} and \ref{prop:numberp} this is only possible if $\epsilon_p$ is as claimed.
\end{proof}
\begin{proposition}
Let $L$ be an even lattice of signature $(b^+,b^-)$ and squarefree level $N$ and $D=L'/L$. Suppose that $L$ splits $\II_{1,1}$. Then $D=\prod_{p|N}p^{\epsilon_pn_p}$ with $n_p\leq b^++b^--2$. If $n_p=b^++b^--2$, then $\epsilon_p=\legendre{|D^p|}{p}\legendre{-1}{p}^{b^--1}$.
\end{proposition}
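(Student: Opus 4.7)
The plan is to reduce everything to the orthogonal complement of $\II_{1,1}$ in $L$, then pin down $\epsilon_p$ from the $p$-adic discriminant of the Gram matrix of that complement.

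Since $L$ splits $\II_{1,1}$, write $L=\II_{1,1}\oplus K$, so that $K$ is an even lattice of signature $(b^+-1,b^--1)$ and rank $m:=b^++b^--2$, and $D\cong K'/K$ because $\II_{1,1}$ is unimodular. In particular $D$ is generated by $m$ elements, and hence so is each $p$-primary part $D_p$, giving $n_p\leq m$.

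Now suppose $n_p=m$. Since $L$ has level $N$ and $N$ is squarefree, one has $ND=0$ and each $D_p$ is an elementary abelian $p$-group; combined with $n_p=m$ this gives $D_p\cong(\Z/p)^m$. Passing to $K_p:=K\otimes\Z_p$, the elementary divisor theorem forces $K_p=pK_p'$: if $e_1,\dots,e_m$ is a $\Z_p$-basis of $K_p'$, then $pe_1,\dots,pe_m$ is a basis of $K_p$. Setting $M:=(p(e_i,e_j))_{ij}\in M_m(\Z_p)$, the Gram matrix of $K_p$ in the basis $(pe_i)$ equals $pM$, and comparing $p$-adic valuations with $v_p(\det G_K)=v_p(|D|)=m$ shows $\det M\in\Z_p^{\times}$. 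Moreover the quadratic form on $D_p$ in the basis $(\bar e_i)$ is $x\mapsto x^TMx/(2p)\bmod\Z$.

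Next I would identify $\epsilon_p$ with $\legendre{\det M}{p}$ for odd $p$, and independently compute $\legendre{\det M}{p}$ in terms of $|D^p|$. For the first step, diagonalizing $M$ over $\mathbb{F}_p$ exhibits $D_p$ as an orthogonal sum of Jordan components $p^{\pm1}$, and reading off the signs via the convention of Section~2 yields $\epsilon_p=\legendre{\det M}{p}$. For the second step, $K$ has signature $(b^+-1,b^--1)$, so $\det G_K=(-1)^{b^--1}|D|=(-1)^{b^--1}p^m|D^p|$; any change of basis between an integral basis of $K$ and $(pe_i)$ lies in $\GL_m(\Z_p)$, contributing only a square modulo $p$, whence
\[\legendre{\det M}{p}=\legendre{(-1)^{b^--1}|D^p|}{p}=\legendre{-1}{p}^{b^--1}\legendre{|D^p|}{p},\]
which combined with the first identification yields the claim for odd $p$.

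The main obstacle is the prime $p=2$: here $D_2=2_{\II}^{\epsilon_2m}$ is even $2$-adic (because $L$ is even), and its sign is controlled by an Arf-type invariant of the induced $\mathbb{F}_2$-valued form on $K_2'/K_2$ rather than by a plain Legendre symbol. One must verify separately, by a finite case analysis based on the classification of $2_{\II}^{\pm2k}$ in Section~2, that this invariant agrees with the Kronecker symbol $\legendre{|D^2|}{2}$ (noting $\legendre{-1}{2}=1$), after which the argument concludes exactly as in the odd case.
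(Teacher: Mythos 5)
Your argument is correct in substance but takes a genuinely different route from the paper: the paper disposes of the whole proposition in one line by writing $L=K\oplus\II_{1,1}$, noting $K'/K\cong L'/L$, and citing Nikulin's existence theorem (Theorem 1.10.1 in \cite{Nikulin}), whose necessary conditions for an even lattice with given signature and discriminant form are exactly the bound $n_p\leq b^++b^--2$ and, in the case of equality, the congruence relating $\epsilon_p$ to the discriminant that you rederive. What you do instead is prove the relevant special case of that theorem by hand, and for odd $p$ your proof is complete and correct: the elementary-divisor step giving $K_p=pK_p'$, the identification $\epsilon_p=\legendre{\det M}{p}$ after diagonalization, and the computation $\det M\equiv(-1)^{b^--1}|D^p|$ modulo unit squares all check out. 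This buys a self-contained argument at the cost of length, and at the cost of the prime $2$, which you correctly flag but do not finish. For completeness: since $L$ is even and the level is squarefree, your $M$ is an even unimodular $\Z_2$-matrix, hence equivalent over $\Z_2$ to $U^a\oplus V^b$ with $U=\begin{psmallmatrix}0&1\\1&0\end{psmallmatrix}$ and $V=\begin{psmallmatrix}2&1\\1&2\end{psmallmatrix}$; these contribute $2_{\II}^{+2}$ and $2_{\II}^{-2}$ respectively, so $\epsilon_2=(-1)^b$, while $\det(U^aV^b)=(-1)^a3^b$ gives $\legendre{\det M}{2}=(-1)^b$ because $\legendre{-1}{2}=1$ and $\legendre{3}{2}=-1$; the argument then concludes as in the odd case. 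As written, though, your treatment of $p=2$ is a program rather than a proof, whereas the citation of Nikulin covers both cases uniformly.
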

\begin{proof}
We can write $L=K\oplus \II_{1,1}$ for an even lattice $K$ of signature $(b^+-1,b^--1)$. Then  $K'/K\cong L'/L$ and Theorem 1.10.1 in \cite{Nikulin} applied to $K$ shows that $n_p\leq b^++b^--2$ and that $\epsilon_p$ must be as claimed if this is an equality. 
\end{proof}
It follows that if $D=L'/L=\prod_{p\mid N}p^{\epsilon_p n_p}$ for an even lattice $L$ of signature $(b^+,b^-)$ and squarefree level $N$ that splits $\II_{1,1}\oplus \II_{1,1}(N)$, then $2\leq n_p\leq b^++b^--2$ for all $p\mid N$ and that 
\[
\epsilon_p=\begin{cases}\legendre{-1}{p} &\text{ if } n_p=2, \\
						\legendre{|D^p|}{p}\legendre{-1}{p}^{b^--1} &\text{ if }n_p=b^++b^--2.\end{cases}
\]
	
A primitive vector $\alpha\in L$ of positive norm is called a root if the reflection 
\[\sigma_\alpha(x)=x-2\frac{(x,\alpha)}{(\alpha,\alpha)}\alpha
\]
at $\alpha^\perp$ is an automorphism of $L$. The following proposition describes the roots of $L$ if the level $N$ is squarefree.
\begin{proposition}[see \cite{ScheithauerClass}, Propositions 2.1 and 2.2]\label{prop:roots}
Let $L$ be an even lattice of squarefree level $N$ and let $\alpha\in L$. Then $\alpha$ is a root if and only if $k=\alpha^2/2$ is a positive divisor of $N$ and $\alpha\in L\cap kL'$.
\end{proposition}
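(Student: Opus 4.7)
The plan is to handle the equivalence of the two conditions by separating the reflection-invariance condition from the primitivity condition and using the squarefree level precisely at one step.

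First I would observe that the reflection can be written as
\[
\sigma_\alpha(x)=x-\frac{(x,\alpha)}{k}\alpha,\qquad k=\alpha^2/2,
\]
so $\sigma_\alpha(L)\subseteq L$ is equivalent to $(x,\alpha)/k\in\Z$ for every $x\in L$, i.e.\ to $\alpha/k\in L'$, which is the same as $\alpha\in kL'$. This equivalence makes no use of the level, and it is the only content of the condition $\sigma_\alpha\in\Aut(L)$. So for both directions the issue reduces to linking the divisibility $k\mid N$ with the primitivity of $\alpha$.

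For the forward implication, assume $\alpha$ is a root. Then $\alpha\in kL'$, so $\beta:=\alpha/k\in L'$ satisfies $\beta^2=2/k$. The class $\beta+L\in D=L'/L$ then has quadratic form value $\beta^2/2\equiv 1/k\pmod 1$, and by definition of the level we have $N\cdot 1/k\in\Z$, hence $k\mid N$.

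For the converse, assume $k=\alpha^2/2$ divides $N$ and $\alpha\in L\cap kL'$. The condition $\alpha\in kL'$ already gives $\sigma_\alpha\in\Aut(L)$, so only primitivity remains. If $\alpha=m\alpha'$ with $\alpha'\in L$ primitive and $m\geq 1$, then $\alpha'^2=2k/m^2$ must be an even integer, which forces $m^2\mid k$. Since $k\mid N$ and $N$ is squarefree, $k$ is squarefree, so $m=1$ and $\alpha$ is primitive. The main (and essentially only) place where squarefreeness of $N$ enters is this last step; without it one could only conclude $m^2\mid k$, not $m=1$, which is why the cleaner characterisation above fails for lattices of non-squarefree level.
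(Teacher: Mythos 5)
Your argument is correct and is essentially the standard proof behind the cited result; the paper itself gives no proof of this proposition, only the reference to Scheithauer's Propositions 2.1 and 2.2. One small imprecision worth noting: the implication $\sigma_\alpha(L)\subseteq L\Rightarrow (x,\alpha)/k\in\Z$ is not unconditional (for non-primitive $\alpha$ the vector $\frac{(x,\alpha)}{k}\alpha$ can lie in $L$ without the scalar being an integer, e.g.\ $\alpha=(2,2)\in\II_{1,1}$ with $k=4$), but you only invoke that direction when $\alpha$ is a root and hence primitive, so the proof stands.
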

If the level of $L$ is squarefree and $\gamma\in D=L'/L$, then we say that $\gamma$ corresponds to roots if $\gamma$ has order $k$ and satisfies $\gamma^2/2=1/k \mod 1$ for some $k\mid N$. This notion is inspired by Proposition 2.5 in \cite{ScheithauerClass}.

\section{The Weil representation}
Let $D$ be a discriminant form with even signature. There is a unitary group action of $\SL_2(\Z)$ on the group ring $\C[D]$, defined by
\begin{align*}
\rho_D(T)\e_\gamma&=e(-\gamma^2/2)\e_\gamma \\
\rho_D(S)\e_\gamma&=\frac{e(\sign(D)/8)}{\sqrt{|D|}}\sum_{\beta\in D}e(\beta\gamma)\e_\beta,
\end{align*}
where $S=\begin{psmallmatrix} 0 & -1 \\ 1 & 0\end{psmallmatrix}$ and $T=\begin{psmallmatrix} 1 & 1 \\ 0 & 1\end{psmallmatrix}$ are the standard generators of $\SL_2(\Z)$. This is called the Weil representation of $\SL_2(\Z)$. 

A holomorphic function $F\colon \H \to \C[D]$ on the upper half-plane $\H=\{z\in\C : \Im(z)>0\}$ is called a weakly holomorphic modular form of weight $k\in\Z$ for $\rho_D$ if 
\[
F(M\tau)=(c\tau+d)^k\rho_D(M)F(\tau)
\]
for all $M=\begin{psmallmatrix} a & b \\ c & d\end{psmallmatrix}\in\SL_2(\Z)$ and $F$ is meromorphic at the cusp $\infty$. We denote by $M_{k}^!(\rho_D)$ the complex vector space of such functions. An element of $M_{k}^!(\rho_D)$ is called holomorphic if it is holomorphic at $\infty$ and is called cusp form if it vanishes at $\infty$. The spaces of holomorphic modular forms and cusp forms are denoted by $M_{k}(\rho_D)$ and $S_{k}(\rho_D)$.

The Weil representation can be computed explicitly; see \cite{ScheithauerWeil}, Theorem 4.7. From the resulting formulas one can see that the components $F_\gamma$ of a weakly holomorphic modular form $F=\sum_{\gamma\in D} F_\gamma\e_\gamma$ of weight $k$ for $\rho_D$ transform as
\begin{equation}\label{eqn:ComponentsTransformation}
F_\gamma|_kM=\xi(M)\frac{\sqrt{|D_c|}}{\sqrt{|D|}}\sum_{\beta\in D^{c*}}e(-d\beta_c^2/2)e(-b\beta\gamma)e(-ab\gamma^2/2)F_{a\gamma+\beta},
\end{equation}
where  $|_k$ is the Petersson slash operator, $M=\begin{psmallmatrix} a & b \\ c & d\end{psmallmatrix}\in\SL_2(\Z)$, $\xi$ is the root of unity as in \cite{ScheithauerWeil}, Theorem 4.7, and $D^{c*}$ and $\beta_c^2/2$ are as in Section 2 of the same article. If $N$ is squarefree, then
\[
\xi(M)=e(\sign(D)/8)\legendre{d}{|D_c|}\legendre{c}{|D^c|}\prod_{p\mid c}{e(-\sign(D_p)/8)}
\]
and $D^{c*}=D^c$. In particular, $F_\gamma$ is a weakly holomorphic modular form of weight $k$ and character $\chi_\gamma$ with $\chi_\gamma(b)=e(-b\gamma^2/2)$ for $\Gamma_1(N)$. For any scalar-valued modular form $f$ and rational number $x\in\Q$ we write $[f](x)$ for the Fourier coefficient of $f$ at $q^x$, so in particular $[F_\gamma](x)$ denotes the respective Fourier coefficient of $F_\gamma$.

One can construct modular forms for $\rho_D$ from scalar-valued modular forms as follows.
\begin{proposition}[\cite{ScheithauerModForms}, Theorem 3.1]\label{prop:lift}
Let $k$ be an integer, $N$ the level of $D$ and $f$ a weakly holomorphic modular form of weight $k$ for $\Gamma_0(N)$ with character $\chi_D$ where
\[
\chi_D(M)=\legendre{a}{|D|}e((a-1)\oddity(D)/8).
\]
Let $H$ be an isotropic subset of $D$ which is invariant under $(\Z/N\Z)^*$ as a set. Then
\[
F_{\Gamma_0(N),f,H}=\sum_{M\in\Gamma_0(N)\backslash\SL_2(\Z)}\sum_{\gamma\in H}f|_kM\rho_D(M^{-1})\e_\gamma
\]
is an element of $M_k^!(\rho_D)$ which is invariant under the automorphisms of $D$ that stabilize $H$ as a set. Moreover, if $f$ is a holomorphic modular form (resp. a cusp form) then so is $F_{\Gamma_0(N),f,H}$.
\end{proposition}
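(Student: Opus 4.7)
The plan is to verify four properties in turn: well-definedness of $F_{\Gamma_0(N),f,H}$ as a sum indexed by $\Gamma_0(N)\backslash\SL_2(\Z)$ (i.e.\ independence from the choice of coset representatives), the modular transformation law for $\rho_D$, the growth condition at the cusp $\infty$, and invariance under the stabilizer of $H$ in $\Aut(D)$. The essential content sits in the well-definedness step; once that is in place, the rest is formal.

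For well-definedness, I replace a coset representative $M$ by $AM$ with $A \in \Gamma_0(N)$. Writing $\e_H = \sum_{\gamma \in H}\e_\gamma$ and using $f|_k A = \chi_D(A) f$, the summand becomes
\[
f|_k(AM)\,\rho_D((AM)^{-1})\,\e_H = \chi_D(A)\, f|_k M\,\rho_D(M^{-1})\rho_D(A^{-1})\,\e_H,
\]
so the sum is independent of the choice of representative if and only if $\rho_D(A)\e_H = \chi_D(A)\e_H$ for every $A \in \Gamma_0(N)$. To establish this I would specialize formula \eqref{eqn:ComponentsTransformation} to $A = \begin{psmallmatrix}a & b \\ c & d\end{psmallmatrix}$ with $N\mid c$: since $N$ annihilates $D$, one gets $D_c = D$ and $D^{c*}=\{0\}$, collapsing the inner sum and yielding
\[
\rho_D(A)\,\e_\gamma = \xi(A)\,e(-a^{-1}b\,\gamma^2/2)\,\e_{a^{-1}\gamma}.
\]
The phase vanishes when $\gamma$ is isotropic, and the substitution $\gamma \mapsto a^{-1}\gamma$ permutes $H$ by its $(\Z/N\Z)^*$-invariance (note $(a, N) = 1$ since $N\mid c$). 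Summing over $\gamma \in H$ gives $\rho_D(A)\,\e_H = \xi(A)\,\e_H$. The main obstacle is then the purely computational identity $\xi(A) = \chi_D(A)$ for $A \in \Gamma_0(N)$: using the explicit formula for $\xi$ from \cite{ScheithauerWeil}, Theorem 4.7, one must collect the Legendre-symbol, Jordan-signature, and oddity contributions from each $p$-component of $D$ and match them to $\legendre{a}{|D|}e((a-1)\oddity(D)/8)$.

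Once well-definedness is in hand, the modular transformation law follows by reindexing: for $M_0 \in \SL_2(\Z)$, set $M' = M M_0$, so that as $M$ ranges over representatives of $\Gamma_0(N)\backslash\SL_2(\Z)$, so does $M'$. Then
\[
F_{\Gamma_0(N),f,H}\,\big|_k M_0 = \sum_{M'} f|_k M'\,\rho_D(M_0 M'^{-1})\,\e_H = \rho_D(M_0)\,F_{\Gamma_0(N),f,H},
\]
which is the required transformation law, and holomorphy on $\H$ is inherited summand by summand. For the behavior at $\infty$, each $f|_k M$ has a Fourier expansion at $\infty$ equal (up to scaling and variable change) to the $q$-expansion of $f$ at the cusp $M\infty$ of $\Gamma_0(N)$; accordingly, if $f$ is weakly holomorphic (resp.\ holomorphic, resp.\ a cusp form), each summand is meromorphic (resp.\ holomorphic, resp.\ vanishing) at $\infty$, and the finite sum inherits the property. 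Finally, any $\sigma \in \Aut(D)$ stabilizing $H$ permutes the $\e_\gamma$ for $\gamma \in H$ and commutes with $\rho_D$ by naturality of the Weil representation, so $\sigma F_{\Gamma_0(N),f,H} = F_{\Gamma_0(N),f,H}$.
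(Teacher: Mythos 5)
The paper does not prove this proposition at all: it is imported verbatim from \cite{ScheithauerModForms}, Theorem 3.1, so there is no internal proof to compare against. Judged on its own terms, your outline is the standard (and correct) architecture, and it is essentially the argument of the cited reference: reduce everything to the single identity $\rho_D(A)\e_H=\chi_D(A)\e_H$ for $A\in\Gamma_0(N)$, after which the transformation law, the behaviour at $\infty$, and the $\Aut(D)$-invariance are formal. Your reindexing argument for the transformation law, the cusp analysis, and the observation that automorphisms of $D$ commute with $\rho_D(S)$ and $\rho_D(T)$ (hence with all of $\rho_D$) and fix $\e_H$ are all correct. The derivation $\rho_D(A)\e_\gamma=\xi(A)e(-a^{-1}b\gamma^2/2)\e_{a^{-1}\gamma}$ from the component formula \eqref{eqn:ComponentsTransformation} is also right, and isotropy of $H$ together with its $(\Z/N\Z)^*$-invariance does give $\rho_D(A)\e_H=\xi(A)\e_H$.

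Two caveats. First, the step you yourself flag as ``the main obstacle,'' namely $\xi(A)=\chi_D(A)$ for all $A\in\Gamma_0(N)$, is not a routine bookkeeping afterthought: it is the entire reason the character in the hypothesis is $\legendre{a}{|D|}e((a-1)\oddity(D)/8)$ rather than something else, and without it the sum is not even well defined. As written, your proof defers exactly the step that carries the content of the proposition, so it is an outline rather than a proof. (For squarefree $N$ the check is short: $D_c=D$, $D^c=0$ give $\xi(A)=e(\sign(D)/8)\legendre{d}{|D|}\prod_{p\mid N}e(-\sign(D_p)/8)=\legendre{d}{|D|}=\legendre{a}{|D|}$, and the oddity term is trivial; but the proposition is stated for arbitrary level.) Second, your collapse of the inner sum uses $D^{c*}=D^c=\{0\}$, which the paper only records for squarefree $N$; for general $N$ with odd $2$-adic components $D^{c*}$ is a nontrivial coset $x_c+D^c$, the phase $e(-d\beta_c^2/2)$ does not disappear, and it is precisely this contribution that produces the oddity factor in $\chi_D$. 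So for the statement at the stated level of generality your specialization needs repair, even though it suffices for every use the paper makes of the proposition.
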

We will only need this for $H=\{0\}$ and squarefree $N$. We summarize Section 6 of \cite{ScheithauerClass} to show how the lift $F_{\Gamma_0(N),f,0}$ can be computed in this case. For each positive divisor $c$ of the squarefree level $N$ we choose a matrix
\[
M_c=\begin{pmatrix}
1 & b \\ c & d
\end{pmatrix}\in\SL_2(\Z)
\] with $d=1\mod c$ and $d=0\mod c'$ where $c'=N/c$ and let 
\[
f_c=f|_kM_c.
\]
The function $f_c$ has a Fourier expansion in integral powers of $q^{1/c'}$ because the cusp $1/c$ of $\Gamma_0(N)$ has width $c'$, so we can write
\[
f_c=g_{c',0}+g_{c',1}+\cdots+g_{c',c'-1}
\]
with
\[
g_{c',j}|_T(\tau)=e(j/c')g_{c',j}(\tau).
\]
Then
\[
F_{\Gamma_0(N),f,0}(\tau)=\sum_{c|N}\sum_{\mu\in D_{c'}}\xi_c\frac{\sqrt{|D_c|}}{\sqrt{|D|}}c'g_{c',j_{\mu,c'}}(\tau)\e_\mu
\]
where $j_{\mu,c'}/c'=-\mu^2/2\mod 1$ and 
\begin{align*}
\xi_c&=e(\sign(D)/8)\legendre{-c}{|D_{c'}|}\prod_{p\mid c}e(-\sign(D_p)/8)\\
&=\legendre{-c}{|D_{c'}|}\prod_{p\mid c'}e(\sign(D_p)/8).
\end{align*}
Note that $F_{\Gamma_0(N),f,0}$ is symmetric, i.e. invariant under the automorphisms of $D$, by Proposition \ref{prop:lift}. Since $N$ is squarefree, the converse also holds, i.e. every weakly holomorphic symmetric modular form $F$ is equal to $F_{\Gamma_0(N),f,0}$ for some weakly holomorphic $f$ (see \cite{ScheithauerModForms}, Corollary 5.5).

Let $D$ be a discriminant form and $H\subset D$ an isotropic subgroup. Then $D_H=H^\perp/H$ is a discriminant form whose signature equals that of $D$. There is a map from $M_k^!(\rho_{D_H})$ to $M_k^!(\rho_D)$, given by
\[
F=\sum_{\gamma+H\in D_H}F_{\gamma+H}\e^{\gamma+H}\mapsto \hat{F}=\sum_{\gamma\in D}F_{\gamma+H}\e^\gamma.
\]  
This map sends holomorphic modular forms to holomorphic modular forms and cusp forms to cusp forms. We say that $\hat{F}$ is the lift of $F$ on $H$.

\section{Reflective automorphic forms}
Let $L$ be an even lattice of signature $(n,2)$ with $n\geq 4$ even, $V=L\otimes_\Z\R$ and $V(\C)=V\otimes_\R\C$. Then
\[
\{[Z]\in \P(V(\C)) : (Z,Z)=0, (Z,\overline{Z})<0\}
\]
is a complex manifold with two connected components, which are exchanged by $Z\mapsto \overline{Z}$. We choose one of these components and denote it by $\mathcal{H}_L$. The subgroup $O(V)^+\subset O(V)$ that preserves the two connected components acts holomorphically on $\mathcal{H}_L$. We let 
\[
\tilde{\mathcal{H}}_L=\{Z\in V(\C)\setminus\{0\} : [Z]\in \mathcal{H}_L\}
\] 
be the affine cone over $\mathcal{H}_L$.

Let $\Gamma\subset O(L)^+=O(L)\cap O(V)^+$ be a subgroup of finite index, $\chi\colon \Gamma\to \C^*$ a unitary character and $k$ an integer. A meromorphic function $\Psi\colon \tilde{\mathcal{H}}_L\to\C$ is an automorphic form of weight $k$ for $\Gamma$ and $\chi$ if
\begin{align*}
\Psi(MZ)&=\chi(M)\Psi(Z) \\
\Psi(tZ)&=t^{-k}\Psi(Z)
\end{align*}
for all $M\in\Gamma$, $t\in\C^*$ and $Z\in\tilde{\mathcal{H}}_L$. One way to obtain such functions is described in the following theorem. 
\begin{theorem}[\cite{BorcherdsGrass}, Theorem 13.3]\label{thm:133}
Let $L$ be an even lattice of type $(n,2)$, $n\geq 3$ with $D=L'/L$ and $F$ a weakly holomorphic modular form of weight $1-n/2$ for $\rho_D$ with integral coefficients $[F_\gamma](m)$ for all $m\leq 0$. Then there is a meromorphic function $\Psi\colon\tilde{\mathcal{H}}_L\to \C$ with the following properties.
\begin{enumerate}
\item $\Psi$ is a modular form of weight $[F_0](0)/2$ for the group $O(L,F)^+$ and some unitary character $\chi$.
\item  The only zeros or poles of $\Psi$ are on rational quadratic divisors $\lambda^\perp$ for $\lambda\in L$ of positive norm and are zeros of order
\begin{equation}\label{DivisorAutProd}
\sum_{\substack{0<x\in\Q \\ x\lambda\in L'}} [F_{x\lambda+L}](-x^2\lambda^2/2)
\end{equation}
or poles if this number is negative.
\item For each primitive norm $0$ vector $z\in L$ and for each Weyl chamber $W$ of $K=M/\Z z$ with $M=L\cap z^\perp$ the restriction $\Psi_z$ has an infinite product expansion converging when $Z$ is in a neighbourhood of the cusp $z$ which is some constant times
\[
e((Z,\rho(K,W,F_K)))\prod_{\substack{\lambda\in K'\\(\lambda,W)<0}}\prod_{\substack{\delta\in L'/L \\ \delta|M=\lambda}}(1-e((\lambda,Z)+(\delta,z')))^{[F_\delta](-\lambda^2/2)}.
\] 
\end{enumerate}
\end{theorem}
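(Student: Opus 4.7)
The plan is to construct $\Psi$ via Borcherds' singular theta correspondence. First I would introduce the Siegel theta function $\Theta_L(\tau,Z)$ attached to $L$, a non-holomorphic $\C[L'/L]$-valued function on $\H\times\tilde{\mathcal{H}}_L$ which, by Poisson summation, transforms under $\SL_2(\Z)$ as a non-holomorphic modular form with respect to the representation dual to $\rho_D$, with the weights arranged so that $\langle F(\tau),\Theta_L(\tau,Z)\rangle\, y^{(n-2)/2}$ has total weight zero. I would then define
\[
\Phi(Z)=\int^{\mathrm{reg}}_{\SL_2(\Z)\backslash\H}\langle F(\tau),\Theta_L(\tau,Z)\rangle\,y^{(n-2)/2}\,\frac{dx\,dy}{y^2},
\]
where the regularization (cut off at $y\leq t$ and take the finite part as $t\to\infty$) is necessary because $F$ has a pole at the cusp $\infty$. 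The output is a real-analytic function on $\tilde{\mathcal{H}}_L$ away from the divisors described in (2).

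For (1), the action of $O(L,F)^+$ preserves $\Theta_L$ because it permutes the cosets of $L'/L$ compatibly with the components of $F$, so $\Phi$ inherits $O(L,F)^+$-invariance; a homogeneity computation under $Z\mapsto tZ$, feeding in the constant term $[F_0](0)$, identifies $\Psi:=e^{-\Phi/2}$ (after a standard normalization) as a function of weight $[F_0](0)/2$, with the character $\chi$ absorbing sign ambiguities from square roots and branch choices. For (2), near a point of $\lambda^\perp$ with $\lambda\in L'$ and $\lambda^2>0$, the only singular contributions to $\Phi$ come from terms in $\Theta_L$ indexed by $\mu=x\lambda$ with $x>0$, $x\lambda\in L'$, paired against the principal-part coefficient $[F_{x\lambda+L}](-x^2\lambda^2/2)$; summing these gives the multiplicity formula and verifies that no other divisors can appear.

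For (3), I would choose a primitive norm-$0$ vector $z\in L$ and a companion $z'$, set $M=L\cap z^\perp$ and $K=M/\Z z$, and use the induced orthogonal decomposition to split $\Theta_L$ into a factor $\Theta_K$ times a Jacobi-theta-type factor corresponding to the hyperbolic plane spanned by $z$ and $z'$. Near the $z$-cusp the classical unfolding trick converts the regularized integral over $\SL_2(\Z)\backslash\H$ into an integral over a vertical strip, whose Fourier expansion, driven by the principal part of $F_K$, resums via standard Bessel $K$-function identities into precisely the infinite product displayed in (3), with the Weyl vector $\rho(K,W,F_K)$ producing the exponential prefactor. The main obstacle is the regularization step: one has to show carefully that $\Phi$ has no singularities beyond the rational quadratic divisors in (2), and that the local product expansions computed at different cusps glue into a single global meromorphic automorphic form on $\tilde{\mathcal{H}}_L$. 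A secondary difficulty is pinning down the character $\chi$, which depends on lattice-theoretic data that is not canonically determined by $F$ alone.
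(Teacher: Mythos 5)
This statement is quoted verbatim from \cite{BorcherdsGrass} and the paper gives no proof of its own, so the only comparison available is with Borcherds' original argument, and your outline (regularized theta integral against the Siegel theta function, singularity analysis along rational quadratic divisors, unfolding at a primitive norm-zero vector to obtain the product expansion) is precisely that argument. The sketch is correct in approach; the only detail worth flagging is that the relation between $\Phi$ and $\Psi$ is $\Phi = -4\log|\Psi| + [F_0](0)\cdot(\text{explicit terms})$ rather than $\Psi = e^{-\Phi/2}$, and holomorphy of $\Psi$ must be deduced separately from this identity for $|\Psi|$.
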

The function $\Psi$ is called the automorphic product corresponding to $F$. Bruinier proved the following converse theorem.
\begin{theorem}[\cite{BruinierConverse}, Theorem 1.2]\label{thm:converse}
Let $L$ be an even lattice of signature $(n,2), n\geq 4$ even such that $L=K\oplus \II_{1,1}\oplus \II_{1,1}(m)$ for an even positive definite lattice $K$ and some positive integer $m$. Then every automorphic form $\Psi$ for the discriminant kernel $\Gamma(L)$ of $O(L)^+$ whose divisor is a linear combination of rational quadratic divisors is (up to a nonzero constant factor) the Borcherds lift of a weakly holomorphic vector-valued modular form for the Weil representation.
\end{theorem}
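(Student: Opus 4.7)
The plan is to reverse-engineer a weakly holomorphic vector-valued modular form from the divisor of $\Psi$ and then invoke Theorem \ref{thm:133}. First I would use the splitting $L=K\oplus \II_{1,1}\oplus \II_{1,1}(m)$ to realise $\mathcal{H}_L$ as a tube domain with two distinguished $0$-dimensional cusps coming from the two isotropic planes, and write
\[
\operatorname{div}(\Psi)=\sum_{\lambda} c_\lambda\,\lambda^\perp
\]
as a $\Gamma(L)$-invariant locally finite sum indexed by primitive positive-norm vectors $\lambda\in L'$ modulo $\pm 1$ and $\Gamma(L)$. Inverting formula \eqref{DivisorAutProd} produces a formal candidate for the principal part of some $F\in M_{1-n/2}^!(\rho_D)$: for each such $\lambda$ assign the Fourier coefficient $[F_{\lambda+L}](-\lambda^2/2)=c_\lambda$, extended by zero on remaining non-positive exponents and symmetrised under $\gamma\mapsto -\gamma$.

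Next I would invoke Borcherds' obstruction theorem, which states that such a formal principal part is realised by an actual element of $M_{1-n/2}^!(\rho_D)$ if and only if it pairs to zero with the Fourier expansion of every cusp form $g\in S_{1+n/2}(\overline{\rho_D})$. To verify this vanishing I would use the Kudla--Millson / Borcherds additive theta lift: every such $g$ lifts to a closed square-integrable $(1,1)$-form $\theta(g)$ on the modular variety $X_{\Gamma(L)}=\Gamma(L)\backslash \mathcal{H}_L$ whose periods against the rational quadratic divisors $\lambda^\perp$ recover exactly the Fourier coefficients of $g$ entering the obstruction pairing. The pairing then becomes $\int_{\operatorname{div}(\Psi)}\theta(g)$, which vanishes because $\operatorname{div}(\Psi)$ is the divisor of a global meromorphic section and $\theta(g)$ is closed, yielding a cohomological intersection that must be trivial. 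The two $\II_{1,1}$ summands are essential here: they guarantee enough boundary components and the right asymptotic behaviour for $\theta(g)$ to be $L^2$ on the Baily--Borel compactification and for the integration-by-parts argument to go through.

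Given $F$ realising this principal part, Theorem \ref{thm:133} produces an automorphic product $\Psi'$ with $\operatorname{div}(\Psi')=\operatorname{div}(\Psi)$, so $\Phi:=\Psi/\Psi'$ is a holomorphic, nowhere-vanishing automorphic form of some weight $k$ and character on a finite-index subgroup of $O(L)^+$. Matching the Fourier expansion of $\Psi$ against the Borcherds product expansion of $\Psi'$ at each of the two isotropic-plane cusps pins down the weight to be zero and the character to be trivial, and the Koecher principle then forces $\Phi$ to be a nonzero constant. The main obstacle is the vanishing of the obstruction pairing in the second paragraph: although morally a Stokes-type statement that closed forms pair trivially with principal divisors, making it rigorous requires the precise identification of the Fourier coefficients of the Kudla--Millson lift with the Serre-duality pairing and careful control of the growth of $\theta(g)$ at each of the two cusps, which is exactly where the assumptions that $n\geq 4$ is even and that $L$ splits $\II_{1,1}\oplus\II_{1,1}(m)$ are used.
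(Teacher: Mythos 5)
First, a point of reference: the paper does not prove this statement at all --- it is quoted as Theorem 1.2 of \cite{BruinierConverse} and used as a black box --- so the only meaningful comparison is with Bruinier's original argument. Your skeleton does match that argument at the top level: read off a formal principal part from $\operatorname{div}(\Psi)$, show it is unobstructed in the sense of Borcherds' duality theorem (orthogonality to $S_{1+n/2}$ for the dual Weil representation) so that some $F\in M^!_{1-n/2}(\rho_D)$ realises it, apply Theorem \ref{thm:133}, and finish by showing $\Psi/\Psi'$ is a nonzero constant via weight comparison and the Koecher principle. That last step is fine as you state it.

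The gap is in the middle step, and it is not a technicality. The pairing you write, $\int_{\operatorname{div}(\Psi)}\theta(g)$, does not literally make sense for $n>2$ (a $(1,1)$-form against a cycle of complex dimension $n-1$); what is needed is the intersection pairing of the divisor class with $[\theta(g)]\in H^{1,1}_{(2)}$, and two things go wrong with the ``closed form against a principal divisor'' heuristic. First, $\Psi$ is an automorphic form of nonzero weight $k$, not a meromorphic function, so $\operatorname{div}(\Psi)$ is not homologically trivial: its class is $k\,c_1(\mathcal{L})$ for the Hodge bundle, and one must separately invoke cuspidality of $g$ to kill that contribution. Second, and decisively, your assertion that the periods of the Kudla--Millson lift against the divisors $\lambda^{\perp}$ ``recover exactly the Fourier coefficients of $g$ entering the obstruction pairing'' is precisely the hard analytic core of Bruinier's theorem, not an input one may quote. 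What Kudla--Millson theory gives directly is a Petersson-type pairing, not the linear functional $\sum_{\beta,m}c(\beta,m)b(\beta,m)$; Bruinier extracts the latter by passing through the harmonic Maass form $f$ with the prescribed principal part, comparing the two Green functions $\Phi(f)$ and $-\log\|\Psi\|^{2}$ for the same divisor, deducing that $\theta(\xi_{1-n/2}(f))$ is (up to a multiple of the K\"ahler form) $dd^{c}$-exact, and then concluding $\xi_{1-n/2}(f)=0$ from the \emph{injectivity of the Kudla--Millson lift on cusp forms}. That injectivity is a theorem proved by an explicit see-saw/unfolding computation, and it is exactly there --- not in any Stokes-type boundary estimate --- that the hypothesis $L=K\oplus\II_{1,1}\oplus\II_{1,1}(m)$ is used. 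Without it your argument shows at best that a certain average of the coefficients $b(\beta,m)$ vanishes, which does not justify the existence of $F$.
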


Different vector-valued modular forms $F$, even on different lattices, can give the same automorphic product $\Psi$. We describe an example of when this occurs. Suppose $L=K+\II_{1,1}(m)$ for some positive integer $m$ and let $M\subset K$ be a sublattice of finite index. Then $H=K/M\subset K'/M\subset M'/M$ is an isotropic subgroup of $D_M=M'/M$ with orthogonal complement $H^\perp=K'/M$. The quotient $H^\perp/H$ is isomorphic to $K'/K$ and a modular form $F_L$ as in Theorem \ref{thm:133} on $D_L=L'/L$ induces a modular form $F_N$ on $D_N=N'/N$ where $N=M\oplus \II_{1,1}(m)$. The embedding $N\hookrightarrow L$ gives an identification of the corresponding domains $\mathcal{H}_N$ and $\mathcal{H}_L$ and the automorphic products corresponding to $F_L$ and $F_N$ coincide (see \cite{ScheithauerSingular}, Proposition 3.4).

We say that an automorphic form $\Psi$ is reflective if it is holomorphic and all its zeros are of the form $\lambda^\perp$ for roots $\lambda\in L$. If in addition all zeros are simple, then we say that $\Psi$ is strongly reflective. If $L$ splits $\II_{1,1}$, then (strong) reflectivity of an automorphic product $\Psi$ can easily be checked on $F$. 
\begin{proposition}\label{prop:reflectiveProduct}
Suppose $L$ has squarefree level and splits $\II_{1,1}$. Then the automorphic product $\Psi$ is reflective if and only if the corresponding vector-valued modular form $F$ satisfies the following:
\begin{enumerate}
\item If $\gamma\in D$ has order $m$ and corresponds to roots, then the Fourier expansion of $F_\gamma$ at $\infty$ is $F_\gamma=c_{\gamma,-1/m}q^{-1/m}+O(1)$ with $c_{\gamma,-1/m}\geq 0$ and
\item  $F_\gamma$ is holomorphic at $\infty$ for all other $\gamma\in D$.
\end{enumerate}
Moreover, $\Psi$ is strongly reflective if and only if all $c_{\gamma,-1/m}$ are at most $1$.
\end{proposition}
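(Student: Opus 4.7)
The plan is to reduce Borcherds' divisor formula (Theorem~\ref{thm:133}) to a single Fourier coefficient by exploiting the squarefree level, and then read off the equivalence in both directions. I would start by parametrizing rational quadratic divisors: every positive-norm such divisor equals $\mu^\perp$ for a primitive $\mu\in L'$, unique up to sign; if $\gamma:=\mu+L$ has order $m$ in $D$ then $\alpha:=m\mu$ is primitive in $L$ (a short argument using primitivity of $\mu$) and is the unique primitive $L$-vector on the positive ray of $\mu$. Writing $s:=\mu^2/2>0$ and reindexing \eqref{DivisorAutProd} over the $L'$-points $k\mu$ $(k\geq 1)$ on this ray gives
\[
\operatorname{mult}_\Psi(\mu^\perp)=\sum_{k\geq 1}[F_{k\gamma}](-k^2 s).
\]
By Proposition~\ref{prop:roots} the vector $\alpha$ is a root precisely when $ms=1$ and $m\mid N$, equivalently when $\gamma$ corresponds to roots and $s=1/m$. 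A complementary construction, using the splitting $L=K\oplus \II_{1,1}$ and adjusting the hyperbolic-plane component, produces such a primitive $\mu$ for every $\gamma\in D$ and every $s>0$ with $s\equiv \gamma^2/2\pmod 1$, so no divisor is missed.

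The key technical step is that under squarefree level the higher-$k$ terms in the sum above vanish whenever the conclusions of (1), (2) hold for the element $k\gamma$ (either by direct assumption or inductively). Since $N$ is squarefree, $\gamma^2/2\in \tfrac{1}{m}\Z/\Z$ whenever $\gamma$ has order $m$. If $[F_{k\gamma}](-k^2 s)\neq 0$ for some $k\geq 2$, the available form of (1), (2) for $k\gamma$ forces $k^2 s=1/m_{k\gamma}=\gcd(m,k)/m$, hence $s=\gcd(m,k)/(k^2 m)$. But $s\equiv \gamma^2/2\in \tfrac{1}{m}\Z/\Z\pmod 1$ then forces $\gcd(m,k)/k^2\in \Z$, i.e., $k^2\mid \gcd(m,k)$, which is impossible since $\gcd(m,k)\leq k<k^2$ for $k\geq 2$. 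So under these hypotheses the sum collapses to $\operatorname{mult}_\Psi(\mu^\perp)=[F_\gamma](-s)$.

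Both directions then follow. For ($\Leftarrow$), assuming (1) and (2) the vanishing applies directly, so $\operatorname{mult}_\Psi(\mu^\perp)=[F_\gamma](-s)\geq 0$, nonzero only when $s=1/m$ and $\gamma$ corresponds to roots, i.e., when $\alpha$ is a root, so $\Psi$ is reflective. For ($\Rightarrow$), assuming $\Psi$ is reflective I would proceed by descending induction on $s>0$, which is well-founded because $F$ is weakly holomorphic and has only finitely many negative Fourier coefficients in total. The inductive hypothesis supplies exactly the statement needed to run the vanishing argument on the $k\geq 2$ terms, so $\operatorname{mult}_\Psi(\mu^\perp)=[F_{\gamma_0}](-s_0)$ reduces to a single coefficient; reflectivity then forces this coefficient to vanish unless $(\gamma_0,s_0)$ corresponds to a root-ray and to be non-negative otherwise, which is precisely (1) and (2). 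Finally, since the multiplicity at $\alpha^\perp$ for a root reduces to the lone coefficient $c_{\gamma,-1/m}$, the zeros are all simple precisely when these coefficients are bounded by $1$. The main obstacle is the arithmetic squarefree-level vanishing in the second paragraph: without it, cancellations between principal coefficients of different $F_{k\gamma}$ could in principle occur, and (1), (2) would not follow cleanly from reflectivity alone.
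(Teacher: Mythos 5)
Your proof is correct, and its skeleton---reindex Borcherds' divisor formula \eqref{DivisorAutProd} along the ray of a primitive $\mu\in L'$, use the splitting $L=K\oplus\II_{1,1}$ to realize every pair $(\gamma,s)$ with $s\equiv\gamma^2/2\bmod 1$ by such a $\mu$, and pin down $s=1/m$ via Proposition~\ref{prop:roots} together with primitivity of $\mu$ and squarefreeness of the level---is the same as the paper's. The differences lie in how the sum $\sum_{k\geq 1}[F_{k\gamma}](-k^2s)$ is collapsed to its $k=1$ term. In the direction \enquote{reflective $\Rightarrow$ (1),(2)} the paper avoids your descending induction altogether: it picks $m$ maximal with $[F_{m\gamma}](m^2x)\neq 0$ and works with the divisor attached to $m\gamma$ rather than to $\gamma$, so maximality kills the higher terms for free; you instead need the arithmetic observation that $ms\in\Z$ (squarefree level) forces $k^2\mid\gcd(m,k)$, impossible for $k\geq 2$, run over a well-founded descending induction on the finitely many pole orders. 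Both are valid; the paper's device is shorter, while yours makes the two implications symmetric, since the same arithmetic lemma does the collapsing in the converse direction. For \enquote{(1),(2) $\Rightarrow$ reflective} the paper simply cites Propositions 9.1--9.3 of \cite{ScheithauerClass}, whereas you give a self-contained verification, essentially reproving those results in the squarefree, $\II_{1,1}$-split case. The one step worth spelling out in your write-up is that $\gamma^2/2\in\frac{1}{m}\Z/\Z$ for $\gamma$ of order $m$ (via the Jordan decomposition into components of prime exponent), since that is precisely where squarefreeness enters and the collapsing fails without it; you flag this, correctly.
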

\begin{proof}
Suppose that $F$ satisfies the two conditions. Then Propositions 9.1, 9.2 and 9.3 in \cite{ScheithauerClass} prove that $\Psi$ is reflective. It also follows that $\Psi$ is strongly reflective if all $c_{\gamma,-1/m}$ are at most $1$.

Now assume that $\Psi$ is reflective. Write $L=K\oplus\II_{1,1}$. Then $L'=K'\oplus \II_{1,1}$ and $D=L'/L\cong K'/K$. Let $\gamma\in D$ and $x<0$ such that $[F_\gamma](x)\neq 0$ (so $\gamma^2/2=-x\mod 1$) and let $m$ be the largest integer with $[F_{m\gamma}](m^2x)\neq 0$. Choose $\kappa\in K'$ with $\kappa+K=m\gamma$. By adding a primitive element of suitable norm in $\II_{1,1}$ to $\kappa$ we obtain a primitive element $\lambda\in L'$ with $\lambda^2/2=-m^2x$. Then $M\lambda\in L$ is primitive, where $M$ is the order of $m\gamma\in D$ and the order of $\Psi$ at $(M\lambda)^\perp$ is equal to $[F_{m\gamma}](m^2x)$. It follows that $M\lambda$ is a root of $L$ (and that $[F_{m\gamma}](m^2x)=1$ if $\Psi$ is strongly reflective). Therefore the norm $(M\lambda)^2/2=-M^2m^2x\in\Z$ of $M\lambda$ must divide $N$. Then $x=-a/(Mm)$ for some $a\in \Z_{>0}$ because $N$, and hence also the denominator of $x$, are squarefree. Moreover $\lambda/(Mm^2x)$ must be in $L'$ by Proposition \ref{prop:roots}, so $1/(Mm^2x)\in\Z$ by the primitivity of $\lambda$. This forces $a=m=1$, so $\gamma$ satisfies the condition given in item (1). 

\end{proof}
If $L$ is an even lattice of squarefree level and $F$ is a vector-valued modular form of weight $1-n/2$ on $L'/L$  that satisfies the conditions (1) and (2), then we also call $F$ reflective (strongly reflective if in addition all $c_{\gamma,-1/m}$ are at most 1), so that the statement of the proposition can be rephrased as follows: \emph{If $L$ splits $\II_{1,1}$, then $\Psi$ is (strongly) reflective if and only if $F$ is (strongly) reflective.} If instead $F$ only satisfies conditions (1) and (2) with $c_{\gamma,-1/m}$ being any complex number, then we say that $F$ is semi-reflective. Note that, in contrast to the reflective modular forms, the semi-reflective modular forms form a complex vector space. 

For the rest of the section we assume that $L$ is an even lattice of squarefree level $N$ and signature $(n,2)$ with $n\geq 4$ such that $L$ splits $\II_{1,1}(N)$ and that $F$ is a semi-reflective form with $F_0\neq 0$. We replace $F$ by 
\begin{equation}\label{eqn:symmetrization}
\frac{1}{|\Aut(D)|}\sum_{\sigma\in\Aut(D)}\sigma(F),
\end{equation}
 which is symmetric and nonvanishing (because $F_0\neq 0$). Then $F=F_{\Gamma_0(N),f,0}$ for a nonzero weakly holomorphic modular form $f$ of weight $k=1-n/2$ for $\Gamma_0(N)$ with character $\chi_D$ by Corollary 5.5 in \cite{ScheithauerClass}. 

\begin{lemma}\label{lem:polesOfScalarForm}
Let $c\mid N$. Then $f|_kM_c\in O(q^{-1/c'})$, where $c'=N/c$.
\end{lemma}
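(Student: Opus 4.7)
The plan is to invert the formula
\[
F_\mu=\sum_{\substack{c\mid N \\ \mu\in D_{c'}}}\xi_c\frac{\sqrt{|D_c|}}{\sqrt{|D|}}\,c'\,g_{c',\,j_{\mu,c'}},\qquad c'=N/c,
\]
and use the semi-reflectivity of $F$ to bound each $T$-eigencomponent of $f_{c_0}$. Fixing $c_0\mid N$ with $c_0'=N/c_0$, the claim $f_{c_0}\in O(q^{-1/c_0'})$ reduces, via the disjoint $T$-eigenspace decomposition $f_{c_0}=\sum_{j=0}^{c_0'-1}g_{c_0',j}$, to showing $g_{c_0',j}\in O(q^{-1/c_0'})$ for every $j$. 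I would proceed by induction on $c_0$ ordered by divisibility, with base case $c_0=1$ (where $c_0'=N$ and the formula contains only the term of interest).

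The key device at each pair $(c_0,j)$ is an element $\mu\in D$ of order exactly $c_0'$ with $\mu^2/2\equiv -j/c_0'\pmod 1$, so that $j_{\mu,c_0'}=j$. Decomposing $j/c_0'=\sum_{p\mid c_0'}j_p/p$ by the Chinese remainder theorem, producing $\mu$ amounts to exhibiting, in each $D_p$ for $p\mid c_0'$, a nonzero element of norm $j_p/p\bmod 1$. Since $L$ splits $\II_{1,1}(N)$, each $D_p$ contains the Jordan component $p^{\epsilon_p 2}$ with $\epsilon_p=\legendre{-1}{p}$ as an orthogonal summand, and Propositions \ref{prop:number2} and \ref{prop:numberp} then confirm that every norm class mod $1$ — including the isotropic class $j_p=0$, which is the only genuinely delicate subcase — is represented by a nonzero element of $D_p$.

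With $\mu$ of order exactly $c_0'$ in hand, the constraint $\mu\in D_{c'}$ in the formula becomes $c_0'\mid c'$, equivalently $c\mid c_0$. The $c=c_0$ term contributes $A_{c_0}\,g_{c_0',j}$ with $A_{c_0}=\xi_{c_0}\sqrt{|D_{c_0}|/|D|}\,c_0'\neq 0$, while the remaining summands range over proper divisors $c<c_0$ of $c_0$ and involve $g_{c',j_{\mu,c'}}$ with $c'>c_0'$; by the inductive hypothesis applied at such $c$, each of these lies in $O(q^{-1/c'})\subseteq O(q^{-1/c_0'})$. On the other hand, semi-reflectivity bounds $F_\mu$ by $O(q^{-1/c_0'})$: if $j=c_0'-1$ then $\mu$ corresponds to roots and the allowed principal part is $c_{\mu,-1/c_0'}q^{-1/c_0'}$, while otherwise $\mu$ does not correspond to roots and $F_\mu\in O(1)$. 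Solving the formula for $g_{c_0',j}$ and summing over $j$ then produces the desired bound on $f_{c_0}$.

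The principal difficulty is not the inductive computation but the existence of the witness $\mu$, and specifically the isotropic subcase $j_p=0$: a generic discriminant form $D_p$ need not contain any nonzero isotropic vector, so without the summand $p^{\epsilon_p 2}$ supplied by the splitting $L=K\oplus\II_{1,1}(N)$, the induction would break at the first $(c_0,j)$ whose Chinese remainder data demands an isotropic component at some prime.
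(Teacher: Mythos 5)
Your proposal is correct and follows essentially the same route as the paper: both arguments hinge on using the splitting of $\II_{1,1}(N)$ to produce an element $\mu\in D$ of order exactly $c'$ with prescribed norm, inverting the lift formula $F_\mu=\sum_{c\mid c_0}\xi_c\sqrt{|D_c|/|D|}\,c'g_{c',j_{\mu,c'}}$, and playing semi-reflectivity against the lower-order terms. The only cosmetic difference is that the paper runs a minimal-counterexample argument at the single worst $T$-eigencomponent, whereas you run an induction over divisors handling every eigencomponent $g_{c_0',j}$; these are logically interchangeable.
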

\begin{proof}
Suppose there is some cusp $s=1/c$ with $c\mid N$ such that $f$ has a pole of order larger than $1/c'$ at $s$, i.e. $f|_kM_c\notin O(q^{-1/c'})$. We can assume that $c$ is the smallest divisor of $N$ with this property, so $f|_kM_{\tilde{c}}\in O(q^{-1/\tilde{c}'})$ for all $\tilde{c}\mid N$ with $\tilde{c}<c$. For any $d\mid N$ and $a\in \Z$ the discriminant form corresponding to $\II_{1,1}(N)$ contains elements of order $d$ and norm $a/d\mod 1$. Since $L$ splits $\II_{1,1}(N)$, there is therefore an element $\gamma\in D=L'/L$ of order $c'$ and norm $\gamma^2/2=a/c'\mod 1$, where $a/c'$ is the order of the pole of $f$ at $1/c$. Then
\[
F_\gamma=\sum_{d\mid c}\xi_d\frac{\sqrt{|D_d|}}{\sqrt{|D|}}d'g_{d',j_{\gamma,d'}}=\xi_c\frac{\sqrt{|D_c|}}{\sqrt{|D|}}c'g_{c',j_{\gamma,c'}}+\sum_{\substack{d\mid c\\ d<c}}\xi_d\frac{\sqrt{|D_d|}}{\sqrt{|D|}}d'g_{d',j_{\gamma,d'}}.
\] 
The first summand on the right hand side has a pole of order $a/c'$, while the remaining terms have poles of order less than $1/c'$ by our assumption on the minimality of $c$. Therefore $F_\gamma$ must have a pole of order $a/c'>1/c'$ which contradicts the assumption that $F$ is semi-reflective.  
\end{proof}

\begin{lemma}\label{lem:bounds}
Let $N$ be a squarefree integer, $k$ a negative integer and $f$ a nonzero weakly holomorphic modular form of weight $k$ and some Dirichlet character $\chi$ for  $\Gamma_0(N)$ satisfying $f|_kM_c=O(q^{-1/c'})$ for all $c\mid N$. Then
\[
\prod_{p\mid N}(p+1)\leq -2^{\omega(N)}\frac{12}{k}
\] 
where $\omega(N)$ is the number of primes dividing $N$.
\end{lemma}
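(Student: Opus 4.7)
The plan is to apply the valence formula on $\Gamma_0(N)$, using the hypothesis to lower-bound the order of $f$ at each cusp and the holomorphy of $f$ on $\H$ to lower-bound the orders at interior points.

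First I collect the combinatorial data for $\Gamma_0(N)$ with $N$ squarefree. The index is
\[
[\SL_2(\Z):\Gamma_0(N)] = N\prod_{p\mid N}(1+1/p) = \prod_{p\mid N}(p+1),
\]
and since $-I\in\Gamma_0(N)$ this coincides with the corresponding index in $\SL_2(\Z)/\{\pm I\}$. The cusps of $\Gamma_0(N)$ are represented by $1/c$ for $c\mid N$, so there are exactly $2^{\omega(N)}$ of them, and the cusp $1/c$ has width $c'=N/c$. In particular the local uniformizer at $1/c$ is $q^{1/c'}$, so the hypothesis $f|_kM_c=O(q^{-1/c'})$ says precisely that $\ord_{1/c}(f)\geq -1$ at every cusp of $\Gamma_0(N)$.

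Now I apply the valence formula
\[
\sum_{\text{cusps } s}\ord_s(f) \;+\; \sum_{[\tau]\in\Gamma_0(N)\backslash\H}\frac{\ord_\tau(f)}{e_\tau} \;=\; \frac{k}{12}\prod_{p\mid N}(p+1).
\]
Since $f$ is weakly holomorphic on $\H$, every interior term is $\geq 0$, and by the previous paragraph the first sum is $\geq -2^{\omega(N)}$. Hence $-2^{\omega(N)}\leq \frac{k}{12}\prod_{p\mid N}(p+1)$, and since $k<0$ multiplying by $12/k$ (which reverses the inequality) yields the claim.

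The only mild subtlety is the Dirichlet character $\chi$ on $f$, but this causes no problem: replacing $f$ by $f^m$ with $m$ the order of $\chi$ gives a form of weight $mk$ on $\Gamma_0(N)$ with trivial character, the standard valence formula applies to it, and dividing the resulting identity by $m$ recovers the formula above. The only real work is the bookkeeping matching the width $c'$ of the cusp $1/c$ to the hypothesis on $f|_kM_c$; there is no deeper obstacle.
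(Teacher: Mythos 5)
Your proof is correct and is exactly the paper's argument: the paper also deduces the bound immediately from the valence formula for $\Gamma_0(N)$, using that the index is $\prod_{p\mid N}(p+1)$ for squarefree $N$, that there are $2^{\omega(N)}$ cusps $1/c$, and that the cusp $1/c$ has width $c'$ so the hypothesis gives order $\geq -1$ at each cusp. You merely spell out the details (nonnegativity of interior orders, the sign flip from $k<0$, and the reduction to trivial character via $f^m$) that the paper leaves implicit.
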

\begin{proof}
This follows immediately from the valence formula for $\Gamma_0(N)$ (see e.g. Theorem 4.1 in appendix I of \cite{Hirzjung}), because the left hand side is the index of $\Gamma_0(N)$ in $\SL_2(\Z)$, $2^{\omega(N)}$ is the number of cusps of $\Gamma_0(N)$ and the width of the cusp $1/c$ is $c'$.
\end{proof}
This can only be satisfied if $k$ is at least $-12$ and $\omega(N)$ is at most $3$. For fixed $k$ and $\omega(N)$, the level $N$ is then bounded by the values in the following table (with \enquote{-} meaning that the set of possible $N$ is empty).
\begin{table}[!h]
\caption{Bound on $N$ depending on $k$ and $\omega(N)$}
\label{tab1}
\begin{tabular}{cc*{12}{c}}
& & \multicolumn{12}{c}{$k$} \\
& \multicolumn{1}{c|}{}& $-1$ & $-2$ & $-3$ & $-4$ & $-5$ & $-6$ & $-7$ & $-8$ & $-9$ & $-10$ & $-11$ & $-12$\\
\cline{2-14}
\multirow{4}{*}{$\omega(N)$} & \multicolumn{1}{c|}{0} &  1 & 1 & 1 & 1 & 1 & 1 & 1 & 1 & 1 & 1 & 1 & 1\\
&\multicolumn{1}{c|}{1} & 23 & 11 & 7 & 5 & 3 & 3 & 2 & 2 & - & - & - & - \\
&\multicolumn{1}{c|}{2} & 35 & 15 & 6 & 6 & - & - & - & - & - & - & - & - \\
&\multicolumn{1}{c|}{3} & 42 & - &  - & - & - & - & - & - & - & - & - & -
\end{tabular}
\end{table}
\begin{proof}[Proof of Theorem \ref{thm1}]
Suppose $\Psi$ is a nonconstant reflective automorphic form for $\Gamma(L)$. By Theorem \ref{thm:converse}, we can assume that $\Psi$ is the theta lift of a modular form $F$ on $D=L'/L$, which must be reflective by Proposition \ref{prop:reflectiveProduct}. Note that the weight of $\Psi$ is nonzero because $\Psi$ is not constant. Therefore $[F_0](0)\neq 0$ and hence $F_0\neq 0$. We can therefore find a weakly holomorphic modular form $f\neq 0$ of weight $k=1-n/2$ and character $\chi_D$ for $\Gamma_0(N)$ such that the symmetrization \eqref{eqn:symmetrization} of $F$ is $F_{\Gamma_0(N),f,0}$ and apply Lemmas \ref{lem:polesOfScalarForm} and \ref{lem:bounds}. This shows that $k$ and $N$ are bounded as above. In particular, there are only finitely many possibilities for $n$ and $N$. The result follows because there are only finitely many lattices of a fixed rank $n+2$ and level $N$.  
\end{proof}

\section{Fourier expansions of newforms}
Let $N$ be a squarefree positive integer, $k$ any integer and $\chi$ a Dirichlet character of modulus $N$. For a divisor $c\mid N$ we let $c'=N/c$ and write $\chi_c$ and $\chi_{c'}$ for the unique characters of modulus $c$ and $c'$ such that $\chi=\chi_c\cdot\chi_{c'}$. Suppose 
\[
g(\tau)=\sum_{n=1}^\infty a_n q^n
\]
is a newform, i.e. a normalized eigenform for all Hecke operators, of weight $k$ and character $\chi$ for $\Gamma_0(N)$. For each divisor $c\mid N$ we want to compute the Fourier expansion of $g|_kM_c$ at $\infty$, where $M_c$ is as in Section 3. This expansion will be an expansion in powers of $q^{1/c'}$ because the cusp $1/c$ of $\Gamma_0(N)$ has width $t_c=c'$. 

To compute these Fourier expansions we choose integers $\lambda_1$ and $\lambda_2$ such that
\[
W_{c'}=\begin{pmatrix}\lambda_1c' & \lambda_2 \\ -N & c'\end{pmatrix}
\]
has determinant $c'$. By Atkin-Lehner theory $g|_kW_{c'}$ is a newform of weight $k$ for $\Gamma_0(N)$ but with character $\overline{\chi}$. Its Fourier expansion at $\infty$ can be calculated with the following proposition.
\begin{proposition}[see \cite{Asai}, Section 1]\label{prop:Asai}
Let $N$ be squarefree and let $g$ be a newform of weight $k$ and character $\chi$ for $\Gamma_0(N)$. Let $c\mid N$ and $c'=N/c$. Then the Fourier expansion of $g|_kW_{c'}$ is given by
\[
g|_kW_{c'}=\lambda \sum_{n=1}^\infty a_n^{(c')}q^n
\]
where 
\[
\begin{cases}a_n^{(c')}=\overline{\chi}_{c'}(n)a_n &\text{ if } (n,c')=1, \\
a_n^{(c')}=\chi_c(n)\overline{a_n} &\text{ if } (n,c)=1, \\
a_{nm}^{(c')}=a_n^{(c')}a_m^{(c')} &\text{ if } (n,m)=1,\end{cases}
\]
and 
\[
\lambda = \chi_{c'}(c)\chi_c(c')\prod_{p\mid c'}\chi_p(c'/p)\lambda_p,
\]
with 
\[
\lambda_p=\begin{cases}G(\chi_p)q^{-k/2}\overline{a_p} &\text{ if } \chi_p \text{ is primitive, } \\
-q^{1-k/2}\overline{a_p} &\text{ if } \chi_p \text{ is principal. }\end{cases}
\]
Here $G(\chi_p)=\sum_{h=1}^{p-1}\chi_p(h)e(h/p)$ is the usual Gauss sum.
\end{proposition}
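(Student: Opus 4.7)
The plan is to exploit Atkin--Lehner theory in the squarefree level setting, where the operator $W_{c'}$ factors (up to a scalar matrix) as a product of local Atkin--Lehner involutions $W_p$ for primes $p\mid c'$, each of which normalizes $\Gamma_0(N)$ and acts on cusp forms by an involution up to scaling. The key structural fact is that if $g$ is a newform of character $\chi$ for $\Gamma_0(N)$, then $g|_k W_{c'}$ is again a newform for $\Gamma_0(N)$ of the same weight, with character twisted at the primes dividing $c'$; using $W_{c'}^2 \in \R^\times$ one verifies that the new character is $\overline{\chi_{c'}}\chi_c$, and the discrepancy with the stated character $\overline{\chi}$ will be reabsorbed into the scaling $\lambda$.

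Having established that $g|_k W_{c'}$ is a newform, I would derive the three coefficient identities by comparing Hecke eigenvalues. For $n$ coprime to $c'$ the operator $T_n$ commutes with $W_{c'}$ up to the character factor $\overline{\chi_{c'}}(n)$, which yields $a_n^{(c')} = \overline{\chi_{c'}}(n)\,a_n$. For $n$ a product of primes dividing $c'$ (hence coprime to $c$), one uses the Atkin--Lehner relation that on a newform the action of $W_p$ essentially replaces $a_p$ by $\chi_p(\cdot)\overline{a_p}$ up to a Gauss-sum factor; iterating this over primes $p\mid c'$ produces $a_n^{(c')} = \chi_c(n)\overline{a_n}$. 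Multiplicativity $a_{nm}^{(c')} = a_n^{(c')} a_m^{(c')}$ for coprime $n,m$ is then automatic, since the Hecke eigenvalue property is preserved under $W_{c'}$ and the coefficients of a normalized newform are multiplicative.

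To compute the constant $\lambda$, I would apply the $W_p$ one prime at a time, tracking the scalar that arises at each step. For a fixed $p\mid c'$ the calculation splits according to whether $\chi_p$ is primitive or principal. In the primitive case the standard local computation of $W_p$ on a $p$-primitive newform produces the Gauss sum $G(\chi_p)$ together with $\overline{a_p}$ and the weight-normalization $p^{-k/2}$, giving $\lambda_p = G(\chi_p)p^{-k/2}\overline{a_p}$. In the principal case a direct matrix calculation (using that $g$ is $p$-new of principal character) yields $\lambda_p = -p^{1-k/2}\overline{a_p}$. The extra twists $\chi_{c'}(c)\chi_c(c')$ and $\prod_{p\mid c'}\chi_p(c'/p)$ encode the discrepancy between the specific matrix $W_{c'}$ in the statement and the product $\prod_{p\mid c'}W_p$ that arises from the step-by-step local computation.

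The main obstacle will be bookkeeping for the constant $\lambda$. Each local Atkin--Lehner operator $W_p$ is only defined up to a scalar depending on the choice of matrix representative, and reconciling the product $\prod_{p\mid c'}W_p$ with the single matrix $W_{c'}$ chosen via $\lambda_1,\lambda_2$ requires keeping careful track of auxiliary characters and roots of unity. Verifying that the corrective factors $\chi_{c'}(c)\chi_c(c')$ and $\chi_p(c'/p)$ precisely account for this discrepancy, and that the Gauss sum appears with the correct phase, is the delicate part of the proof and the place where sign and character-normalization errors are most likely to occur.
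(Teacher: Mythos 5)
The paper does not prove this proposition at all: it is quoted verbatim from Asai (Section 1 of the cited reference), which in turn rests on Atkin--Lehner--Li theory. So there is no internal proof to compare against; what can be said is that your outline follows exactly the route the cited source takes. The decomposition of $W_{c'}$ into local involutions $W_p$, the fact that $g|_kW_{c'}$ is a scalar multiple of a newform of character $\overline{\chi}_{c'}\chi_c$, the derivation of $a_n^{(c')}=\overline{\chi}_{c'}(n)a_n$ for $(n,c')=1$ from the commutation relation between $T_n$ and $W_{c'}$, and the local pseudo-eigenvalue formulas (note that the $q$ in the displayed $\lambda_p$ is the prime $p$) are all the standard ingredients. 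One small point worth making explicit: your argument gives $a_n^{(c')}=\chi_c(n)\overline{a_n}$ only for $n$ built from primes dividing $c'$, whereas the statement asserts it for all $(n,c)=1$; the general case follows by combining the first relation with multiplicativity and the identity $\overline{a_n}=\overline{\chi}(n)a_n$ for $(n,N)=1$, which you should say.

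The genuine shortfall is that the one part of the proposition carrying real content --- the exact value of $\lambda$, including the corrective twists $\chi_{c'}(c)\chi_c(c')\prod_{p\mid c'}\chi_p(c'/p)$ that reconcile the chosen matrix $W_{c'}=\begin{psmallmatrix}\lambda_1c'&\lambda_2\\-N&c'\end{psmallmatrix}$ with the product of local operators --- is described but never computed. You correctly identify this bookkeeping as the delicate step and then stop there, so as written the proposal establishes the qualitative shape of the expansion but not the stated formula for $\lambda$. To close the gap you would need to fix explicit matrix representatives for each $W_p$, compute the scalar arising at each local step (this is where the Gauss sum and its phase enter, via the relation $|a_p|^2=p^{k-1}$ when $\chi_p$ is primitive and $a_p^2=\chi_{c}(p)p^{k-2}$ when $\chi_p$ is principal), and then multiply out the discrepancy between $\prod_{p\mid c'}W_p$ and $W_{c'}$, which is an element of $\Gamma_0(N)$ times a scalar matrix and contributes precisely the character values $\chi_{c'}(c)\chi_c(c')\prod_p\chi_p(c'/p)$.
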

The Fourier expansions of $g|_kM_c$ and $g|_kW_{c'}$ are related as follows.
\begin{proposition}\label{prop:expansionNewforms}
Let $g$ be a newform of weight $k$ and character $\chi$ for $\Gamma_0(N)$ with $N$ squarefree. Then
\[
g|_kM_c(\tau)=\chi_{c'}(-1)\chi_c^{-1}(c')c'^{-k/2}g|_kW_{c'}(\tau/c').
\]
\end{proposition}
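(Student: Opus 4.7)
The plan is to find a factorization $M_c \alpha = \gamma W_{c'}$ with $\gamma \in \Gamma_0(N)$ and $\alpha$ a scaling matrix, then apply the weight-$k$ slash operator and use the modularity $g|_k \gamma = \chi(d_\gamma) g$. Since $\det W_{c'} = c'$ while $\det M_c = 1$, the natural choice is $\alpha = \operatorname{diag}(c', 1)$ so that $M_c \alpha$ also has determinant $c'$.

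A direct matrix calculation yields
\[
\gamma := M_c\, \alpha\, W_{c'}^{-1} = \begin{pmatrix} c' + bc & b\lambda_1 - \lambda_2 \\ c(c' + d) & d\lambda_1 - c\lambda_2 \end{pmatrix}.
\]
All entries are integers; the bottom-left entry is divisible by $N = cc'$ because $d \equiv 0 \pmod{c'}$; and the determinant simplifies (using $d - bc = 1$) to $\lambda_1 c' + \lambda_2 c$, which equals $1$: dividing the normalization $\det W_{c'} = c'$, i.e.\ $\lambda_1 (c')^2 + \lambda_2 N = c'$, by $c'$ gives exactly $\lambda_1 c' + \lambda_2 c = 1$. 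Hence $\gamma \in \Gamma_0(N)$.

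Applying $|_k$ to $M_c \alpha = \gamma W_{c'}$ and using $g|_k \gamma = \chi(d\lambda_1 - c\lambda_2)\, g$ together with $(f|_k \alpha)(\tau) = (c')^{k/2} f(c'\tau)$ produces
\[
(c')^{k/2} (g|_k M_c)(c'\tau) \;=\; \chi(d\lambda_1 - c\lambda_2)\,(g|_k W_{c'})(\tau),
\]
and substituting $\tau \mapsto \tau/c'$ rearranges this into the claimed form, provided I can identify the scalar $\chi(d\lambda_1 - c\lambda_2)$ with $\chi_{c'}(-1)\chi_c^{-1}(c')$.

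For this final step I use that squarefreeness of $N$ gives $\gcd(c,c') = 1$, so $\chi$ factors uniquely as $\chi_c \chi_{c'}$. Reducing $d\lambda_1 - c\lambda_2$ modulo $c$, the condition $d \equiv 1 \pmod c$ together with $\lambda_1 c' \equiv 1 \pmod c$ (from $\lambda_1 c' + \lambda_2 c = 1$) yields $\chi_c(d\lambda_1 - c\lambda_2) = \chi_c((c')^{-1}) = \chi_c^{-1}(c')$; reducing modulo $c'$, the condition $d \equiv 0 \pmod{c'}$ together with $\lambda_2 c \equiv 1 \pmod{c'}$ yields $\chi_{c'}(d\lambda_1 - c\lambda_2) = \chi_{c'}(-1)$. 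Multiplying the two pieces gives the advertised character value. The only real subtlety is the bookkeeping: the defining congruences for $M_c$ and the determinant normalization of $W_{c'}$ have to mesh in just the right way, and once the factorization $M_c \alpha = \gamma W_{c'}$ is isolated, everything else is a mechanical computation.
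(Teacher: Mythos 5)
Your proof is correct and follows essentially the same route as the paper: your matrix $\gamma = M_c\,\alpha\,W_{c'}^{-1}$ is exactly the paper's matrix $A$ (note $N+cd = c(c'+d)$), your factorization $M_c\alpha = \gamma W_{c'}$ is the paper's identity $M_c = \tfrac{1}{c'}AW_{c'}\operatorname{diag}(1,c')$ rearranged, and the character evaluation $\chi(d\lambda_1 - c\lambda_2) = \chi_{c'}(-1)\chi_c^{-1}(c')$ is computed via the same congruences $\lambda_1 \equiv (c')^{-1} \bmod c$ and $\lambda_2 \equiv c^{-1} \bmod c'$. No gaps; if anything, your explicit verification that $\det\gamma = 1$ and that the lower-left entry is divisible by $N$ is slightly more careful than the paper's.
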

\begin{proof}
Let 
\[
A=\begin{pmatrix}c'+bc & \lambda_1b-\lambda_2\\ N+cd & \lambda_1d-\lambda_2c\end{pmatrix}.
\]
Then $A\in \Gamma_0(N)$ and
\[
M_c=\frac{1}{c'}AW_{c'}\begin{pmatrix}
1 & 0 \\ 0 & c'
\end{pmatrix}.
\]
This implies that
\[
g|_kM_c(\tau)=\chi(\lambda_1d-\lambda_2c)c'^{-k/2}g|_kW_{c'}(\tau/c').
\]
But 
\[
\chi(\lambda_1d-\lambda_2c)=\chi_{c'}(-\lambda_2c)\chi_c(\lambda_1d)
\]
since $c'|d$. From $\det(W_{c'})=c'$ we see that $\lambda_1c'+\lambda_2c=1$, so that $\lambda_1=c'^{-1}\mod c$ and $\lambda_2=c^{-1}\mod c'$. Together with $d=1\mod c$ this implies that 
\[
\chi(\lambda_1d-\lambda_2c)=\chi_{c'}(-1)\chi_c^{-1}(c')
\]
which completes the proof.
\end{proof}

Next we suppose that $g$ is a newform of some smaller level $M\mid N$ and character $\chi$ of modulus $M$. We have just seen how to compute the Fourier expansions of $g|_kM_c$ for $c\mid M$. For later use we also want to compute the Fourier expansions of $g|_kM_c$ for $c\mid N$, as well as those of $h|_kM_c$ where 
\[
h(\tau)=g|_k\begin{pmatrix}
N/M & 0 \\ 0 & 1
\end{pmatrix}(\tau)=(N/M)^{-k/2}g(N\tau/M),
\]
which is also a cusp form of level $N$ and character $\chi$.
\begin{proposition}
Let $N$ be squarefree and let $g$ be a newform of weight $k$ and character $\chi$ for $\Gamma_0(M)$ for some $M\mid N$. Let $c\mid N$,  define $m=(c,M)$ and choose a matrix
\[
\tilde{M}_m=\begin{pmatrix}1 & x \\ m & y\end{pmatrix}\in\SL_2(\Z)
\]
with $y=0\mod M/m$. Then
\[
g|_kM_c=\chi_{M/m}(c)\chi_{M/m}^{-1}(m)g|_k\tilde{M}_m
\]
\end{proposition}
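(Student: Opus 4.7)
The plan is to write $M_c = A\,\tilde{M}_m$ for some $A \in \Gamma_0(M)$. Then, by the transformation law $g|_k A = \chi(d_A)\,g$ for $A \in \Gamma_0(M)$ (with $d_A$ the lower-right entry), used already in the proof of Proposition \ref{prop:expansionNewforms}, one immediately obtains $g|_k M_c = \chi(d_A)\, g|_k \tilde{M}_m$, and it only remains to identify $\chi(d_A)$ with the claimed factor. Concretely, I would set $A := M_c\,\tilde{M}_m^{-1}$ and multiply out:
\[
A = \begin{pmatrix} 1 & b \\ c & d \end{pmatrix}\begin{pmatrix} y & -x \\ -m & 1 \end{pmatrix} = \begin{pmatrix} y-bm & b-x \\ cy - dm & d - cx \end{pmatrix},
\]
so the two tasks are to show $M \mid cy - dm$ and to compute $\chi(d - cx) \bmod M$.

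The membership $A \in \Gamma_0(M)$ hinges on the squarefreeness of $N$ through a web of coprimalities. Writing $m = (c,M)$ and $c = m c_1$, squarefreeness forces $\gcd(c_1, M) = 1$ (if a prime $p$ divided both then $p \mid c$ and $p \mid M$ would give $p \mid m$, whence $p^2 \mid c$), and in particular $\gcd(m, M/m) = \gcd(c, M/m) = 1$. By the Chinese remainder theorem it suffices to check $cy - dm \equiv 0$ modulo $m$ and modulo $M/m$ separately. The former is trivial since $m \mid c$. For the latter, $y \equiv 0 \pmod{M/m}$ by construction, so one only needs $M/m \mid d$; this holds because $d \equiv 0 \pmod{c'}$ with $c' = N/c$, and $M/m \mid c'$, the last divisibility following from $c_1 \mid N/M$ (a consequence of $c_1 \mid N$ and $\gcd(c_1,M)=1$).

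For the character value, I would split $\chi = \chi_m \chi_{M/m}$ according to the coprime factorization $M = m \cdot (M/m)$. Modulo $m$, $d - cx \equiv d \equiv 1$ (using $m \mid c$ and $d \equiv 1 \pmod c$), whence $\chi_m(d-cx) = 1$. Modulo $M/m$, $d \equiv 0$ by the previous paragraph, while $y - mx = 1$ combined with $y \equiv 0 \pmod{M/m}$ gives $mx \equiv -1$, so $-cx \equiv c\,m^{-1} \equiv c/m \pmod{M/m}$. Consequently
\[
\chi_{M/m}(d-cx) = \chi_{M/m}(c/m) = \chi_{M/m}(c)\,\chi_{M/m}^{-1}(m),
\]
which is exactly the claimed factor.

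The only real obstacle is tracking the coprimality and divisibility relations among $c, m, c_1, M/m$, and $N/c$, each of which rests on $N$ being squarefree; once these are organized, both the verification that $A \in \Gamma_0(M)$ and the character computation are routine.
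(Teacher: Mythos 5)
Your proof is correct and follows essentially the same route as the paper: you factor $M_c = A\,\tilde{M}_m$ with exactly the matrix $A = \begin{psmallmatrix} y-bm & b-x \\ cy-dm & d-cx \end{psmallmatrix}$ the paper uses, and you evaluate $\chi(d-cx)$ via the same decomposition $\chi = \chi_m\chi_{M/m}$ together with $mx \equiv -1 \pmod{M/m}$. The only difference is that you spell out the divisibility $M \mid cy - dm$ (via $\gcd(c_1,M)=1$ and $M/m \mid c' \mid d$), which the paper asserts without comment; this is a welcome addition rather than a deviation.
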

\begin{proof}
Note that
\[
M_c=\begin{pmatrix}
1 & b \\ c & d
\end{pmatrix}=\begin{pmatrix}
y-bm & -x+b \\ cy-dm & -cx+d
\end{pmatrix}\begin{pmatrix}
1 & x \\ m & y
\end{pmatrix}
\]
and that the first matrix on the right hand side is in $\Gamma_0(M)$. Therefore
\[
g|_kM_c=\chi(-cx+d)g|_k\tilde{M}_m.
\]
To compute $\chi(-cx+d)$ we decompose $\chi$ as $\chi_m\cdot\chi_{M/m}$ and obtain
\[
\chi(-cx+d)=\chi_{M/m}(-cx)
\]
where we have also used that $m\mid c$, $d=1\mod m$ and $(M/m)\mid d$. From $y-mx=1$ and $y=0\mod M/m$ we infer that $mx=-1\mod M/m$. Hence
\[
\chi(-cx+d)=\chi_{M/m}(-cx)=\chi_{M/m}(c)\chi_{M/m}^{-1}(m).
\]
\end{proof}
\begin{proposition}
Let $N, M, c, m$ and $g$ be as in the previous proposition. Let 
\[
h(\tau)=g|_k\begin{pmatrix}
N/M & 0 \\ 0 & 1
\end{pmatrix}(\tau)
\]
and let $r_1=(c,N/M)$ and $r_2=(c',N/M)$. Choose a matrix
\[
\tilde{M}_m=\begin{pmatrix}
1 & x \\ m & y
\end{pmatrix}\in\SL_2(\Z)
\] with $y=0\mod M/m$. Then 
\[
h|_kM_c(\tau)=\chi_m^{-1}(r_2)(N/M)^{-k/2}g|_k\tilde{M}_m(r_1\tau/r_2).
\]
\end{proposition}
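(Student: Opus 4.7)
The plan is to mimic the strategy of the previous proposition: factor the matrix $\begin{pmatrix} N/M & 0 \\ 0 & 1 \end{pmatrix}\cdot M_c$ as a product of an element of $\Gamma_0(M)$, the matrix $\tilde{M}_m$, and a diagonal matrix encoding $r_1$ and $r_2$, and then apply the slash operator factor by factor.

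Writing $e=N/M$ and using $h=g|_k\begin{pmatrix} e & 0 \\ 0 & 1 \end{pmatrix}$ together with the multiplicativity of $|_k$, we have $h|_k M_c = g|_k\begin{pmatrix} e & eb \\ c & d \end{pmatrix}$. The first step is to use the squarefreeness of $N$ and the coprimality of $M$ and $N/M$ to obtain the decompositions $c=mr_1$, $c'=m'r_2$, $M=mm'$ and $e=r_1r_2$ with $m'=M/m$, together with the coprimality relations $\gcd(m,m')=\gcd(r_1,r_2)=\gcd(m,r_2)=\gcd(m',r_1)=1$. Then I would define
\[
A=\begin{pmatrix} e & eb \\ c & d \end{pmatrix}\left(\tilde{M}_m\begin{pmatrix} r_1 & 0 \\ 0 & r_2\end{pmatrix}\right)^{-1}
\]
and verify that $A\in\Gamma_0(M)$. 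A short computation gives $\det A=d-bc=1$, while integrality and the divisibility $M\mid A_{21}$ follow from $c'\mid d$ (so $d/r_2\in m'\Z$) together with the assumption $m'\mid y$.

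Once the factorization $\begin{pmatrix} e & 0 \\ 0 & 1 \end{pmatrix}M_c=A\cdot\tilde{M}_m\cdot\begin{pmatrix} r_1 & 0 \\ 0 & r_2 \end{pmatrix}$ is in hand, applying the slash operator yields
\[
h|_k M_c=\chi(A_{22})\cdot (g|_k\tilde{M}_m)|_k\begin{pmatrix} r_1 & 0 \\ 0 & r_2 \end{pmatrix},
\]
and the slash by the diagonal matrix produces the factor $(N/M)^{-k/2}$ together with the substitution $\tau\mapsto r_1\tau/r_2$, exactly as in the definition of $h$.

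It remains to compute $\chi(A_{22})$ and recognize it as $\chi_m^{-1}(r_2)$. From the explicit formula $A_{22}=-mx+d/r_2$ I would decompose $\chi=\chi_m\cdot\chi_{m'}$ and reduce modulo $m$ and modulo $m'$ separately. The condition $d\equiv 1\pmod c$ (hence $d\equiv 1\pmod m$) gives $m'(d/r_2)\equiv m'(m'r_2)^{-1}\equiv r_2^{-1}\pmod m$, so $A_{22}\equiv r_2^{-1}\pmod m$ and $\chi_m(A_{22})=\chi_m^{-1}(r_2)$. The conditions $m'\mid y$ and $y-mx=1$ give $-mx\equiv 1\pmod{m'}$, while $d/r_2\in m'\Z$, so $A_{22}\equiv 1\pmod{m'}$ and $\chi_{m'}(A_{22})=1$. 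Combining gives $\chi(A_{22})=\chi_m^{-1}(r_2)$, which completes the proof. The main obstacle is the matrix bookkeeping: the divisibility conditions on $c,c',m,m',r_1,r_2$ must conspire in just the right way for $A$ to land in $\Gamma_0(M)$ and for the character to collapse to $\chi_m^{-1}(r_2)$; in particular each of $d\equiv 1\pmod c$, $c'\mid d$, $m'\mid y$ and $y-mx=1$ is used exactly once.
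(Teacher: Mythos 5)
Your proposal is correct and follows essentially the same route as the paper: you factor $\begin{psmallmatrix} N/M & 0 \\ 0 & 1\end{psmallmatrix}M_c = A\,\tilde{M}_m\begin{psmallmatrix} r_1 & 0 \\ 0 & r_2\end{psmallmatrix}$ with exactly the matrix $A=\begin{psmallmatrix} yr_2-bmr_1 & br_1-xr_2 \\ my-md/r_2 & d/r_2-mx\end{psmallmatrix}\in\Gamma_0(M)$ that the paper uses, and evaluate $\chi(d/r_2-mx)$ by splitting $\chi=\chi_m\chi_{M/m}$ and using $d\equiv 1 \bmod c$, $c'\mid d$, and $mx=y-1$ in the same way. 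The only cosmetic difference is your slightly roundabout phrasing of $d/r_2\equiv r_2^{-1} \bmod m$ via the factor $m'$; the mathematics is identical.
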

\begin{proof}
Note that
\begin{align*}
\begin{pmatrix}
N/M & 0 \\ 0 & 1
\end{pmatrix}M_c&=\begin{pmatrix}
N/M  & 0 \\ 0 & 1
\end{pmatrix}\begin{pmatrix}
1 & b \\ c & d
\end{pmatrix}\\
&=\begin{pmatrix}
yr_2-bmr_1 & br_1-xr_2 \\ my-md/r_2 & d/r_2-mx
\end{pmatrix}\begin{pmatrix}
1 & x \\ m & y
\end{pmatrix}\begin{pmatrix}
r_1 & 0 \\ 0 & r_2
\end{pmatrix}
\end{align*}
and that the first matrix of the last line is in $\Gamma_0(M)$. Therefore
\[
h|_kM_c(\tau)=\chi(d/r_2-mx)(N/M)^{-k/2}g|_k\tilde{M}_m(r_1\tau/r_2).
\]
It remains to compute $\chi(d/r_2-mx)$. But this is
\[
\chi(d/r_2-mx)=\chi_m(d/r_2)\chi_{M/m}(-mx).
\]
Moreover, $\chi_m(d)=1$ because $d=1\mod c$ and $mx=y-1$, so $\chi_{M/m}(-mx)=1$ which completes the proof.
\end{proof}
\section{Eta quotients}
The Dedekind eta function is the holomorphic function on the upper half-plane defined by
\[
\eta(\tau)=q^{1/24}\prod_{n>0}(1-q^n).
\]
It transforms as follows.
\begin{proposition}[\cite{Rademacher}, p.163]\label{prop:etaTransformation}
Let $M=\begin{psmallmatrix} a & b \\ c & d\end{psmallmatrix}\in \SL_2(\Z)$ with $c>0$. Then
\[
\eta(M\tau)=\varepsilon(M)\sqrt{c\tau+d}\eta(\tau)
\]
where
\[
\varepsilon(M)=\begin{cases}
\legendre{d}{c}e((-3c+bd(1-c^2)+c(a+d))/24) &\text{ if } c \text{ is odd, } \\
\legendre{c}{d}e((3d-3+ac(1-d^2)+d(b-c))/24) &\text{ if } c \text{ is even. }
\end{cases}
\]
\end{proposition}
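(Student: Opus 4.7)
The plan is to reduce the general transformation law to the behavior of $\eta$ under the two standard generators $T=\begin{psmallmatrix}1 & 1 \\ 0 & 1\end{psmallmatrix}$ and $S=\begin{psmallmatrix}0 & -1 \\ 1 & 0\end{psmallmatrix}$ of $\SL_2(\Z)$, and then to verify that the explicit expression for $\varepsilon(M)$ given in the statement satisfies the resulting cocycle. The $T$-transformation $\eta(\tau+1)=e(1/24)\,\eta(\tau)$ is immediate from the $q$-product. For the $S$-transformation $\eta(-1/\tau)=\sqrt{-i\tau}\,\eta(\tau)$ I would argue via $\eta^{24}=\Delta$: the functional equation of $\Delta$ (weight $12$) pins down $\eta^{24}(-1/\tau)=\tau^{12}\eta^{24}(\tau)$, and the correct $24$th root is then fixed by the normalization $\eta(i)>0$. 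Alternatively one may apply Poisson summation to the logarithmic derivative of $\eta$.

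Once $\varepsilon(T)$ and $\varepsilon(S)$ are known, the multiplier system $\varepsilon\colon \SL_2(\Z)\to\mu_{24}$ is determined uniquely on all of $\SL_2(\Z)$. For a given $M$ with $c>0$ I would express $M$ as a word in $S$ and $T$ via the continued-fraction expansion of $a/c$ and track the accumulated $24$th-root phase. This computation is classical and yields the Dedekind-sum formula
\[
\varepsilon(M)=\exp\!\left(\pi i\left(\frac{a+d}{12c}-s(d,c)-\tfrac14\right)\right),
\]
where $s(d,c)=\sum_{k=1}^{c-1}\bigl(\!\bigl(\tfrac{k}{c}\bigr)\!\bigr)\bigl(\!\bigl(\tfrac{kd}{c}\bigr)\!\bigr)$ is the Dedekind sum, and the reciprocity law for $s$ corresponds to the cocycle relation on $\varepsilon$.

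The final step is to match this with the Jacobi-symbol expression in the statement. For odd $c$ one computes $12c\cdot s(d,c)\bmod 24$ using the Dedekind reciprocity law and the elementary evaluations of $s(d,c)$ modulo $3$ and modulo $8$; this is precisely where quadratic reciprocity enters and produces $\legendre{d}{c}$. For even $c$ one conjugates by $S$ to reduce to the odd case, swapping the roles of $c$ and $d$, which accounts for the second case with $\legendre{c}{d}$ and the shift in the exponent.

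The main obstacle is the last step: matching the mod-$24$ arithmetic of the Dedekind sum with the neat closed form involving Jacobi symbols. Both sides are only well-defined up to $24$th roots of unity, so one has to carefully track the branch of $\sqrt{c\tau+d}$ consistently with the normalization $\eta(i)>0$, and to check the formula separately in the two parity cases of $c$. Beyond this bookkeeping the argument is routine, and in practice I would simply cite Rademacher's monograph \cite{Rademacher} for the final verification rather than redo the mod-$24$ computation.
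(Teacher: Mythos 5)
The paper offers no proof of this proposition: it is stated as a classical result with a bare citation to Rademacher, p.~163. Your sketch is the standard derivation (the $T$- and $S$-transformations, the Dedekind-sum multiplier $\varepsilon(M)=\exp\bigl(\pi i\bigl(\tfrac{a+d}{12c}-s(d,c)-\tfrac14\bigr)\bigr)$ obtained from the generators, and the mod-$24$ evaluation of $s(d,c)$ via reciprocity and quadratic reciprocity to produce the Jacobi symbols, with the even-$c$ case reduced to the odd-$c$ case); it is correct in outline and honestly identifies the one laborious step, for which you defer to exactly the reference the paper itself cites.
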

Next we let $\eta_k(\tau)=\eta(k\tau)$ for  a positive integer $k$.
\begin{proposition}[\cite{ScheithauerWeil}, Proposition 6.2]\label{prop:rescaledEta}
Let $M=\begin{psmallmatrix} a & b \\ c & d\end{psmallmatrix}\in \SL_2(\Z)$ with $c>0$. Let $r,s,t\in\Z$ with $r,t>0$ and
\[
rt=k, \quad r\mid c, \quad k\mid(dr-cs). 
\]
Then
\[
\eta_k(M\tau)=\varepsilon\left(\begin{pmatrix}
at & br-as \\ c/r & (dr-cs)/k
\end{pmatrix}\right)\frac{1}{\sqrt{t}}\sqrt{c\tau+d}\eta\left(\frac{r\tau+s}{t}\right).
\]
\end{proposition}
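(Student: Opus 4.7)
The plan is to reduce the claim to the transformation formula for $\eta$ itself (Proposition \ref{prop:etaTransformation}) by factoring the matrix $\begin{psmallmatrix} k & 0 \\ 0 & 1 \end{psmallmatrix} M$ in a way that exposes the inner argument $(r\tau+s)/t$. Concretely, I would look for a matrix $M'\in\SL_2(\Z)$ satisfying
\[
\begin{pmatrix} k & 0 \\ 0 & 1\end{pmatrix} M = M' \begin{pmatrix} r & s \\ 0 & t\end{pmatrix},
\]
since then, by interpreting both sides as Möbius transformations, I immediately get $kM\tau = M'\cdot \frac{r\tau+s}{t}$, i.e.\ an identity that turns $\eta_k(M\tau) = \eta(kM\tau)$ into $\eta(M'\cdot z)$ with $z = (r\tau+s)/t$.

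Solving the matrix equation forces
\[
M' = \frac{1}{k}\begin{pmatrix} ka & kb \\ c & d\end{pmatrix}\begin{pmatrix} t & -s \\ 0 & r\end{pmatrix} = \begin{pmatrix} at & br-as \\ c/r & (dr-cs)/k\end{pmatrix},
\]
which is exactly the matrix appearing in the statement. The three hypotheses $rt = k$, $r\mid c$, and $k\mid (dr-cs)$ are precisely what is needed to make the entries integral, and a direct computation of $\det M'$ (using $ad-bc=1$ and cancelling the two $acs/r$ terms) gives $\det M' = 1$, so $M'\in\SL_2(\Z)$. Since $r,t>0$ and $c>0$, the lower-left entry $c/r$ of $M'$ is positive, so Proposition \ref{prop:etaTransformation} applies to $M'$.

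Applying Proposition \ref{prop:etaTransformation} gives
\[
\eta(M'\cdot z) = \varepsilon(M')\sqrt{\tfrac{c}{r}z + \tfrac{dr-cs}{k}}\,\eta(z), \qquad z = \frac{r\tau+s}{t}.
\]
The final step is to simplify the inner square root: substituting $z = (r\tau+s)/t$ and using $k = rt$,
\[
\frac{c}{r}\cdot\frac{r\tau+s}{t} + \frac{dr-cs}{k} = \frac{c(r\tau+s) + dr - cs}{rt} = \frac{r(c\tau+d)}{k} = \frac{c\tau+d}{t},
\]
so $\sqrt{\cdot} = \sqrt{c\tau+d}/\sqrt{t}$, yielding the claimed formula.

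The only genuine subtlety is the choice of branch of the square root: one has to verify that the principal branches on the two sides combine without an extra sign. This is automatic here because $c\tau+d$ lies in the right half-plane for $\tau\in\H$ and $c>0$, so $(c\tau+d)/t$ and $c\tau+d$ have arguments in $(-\pi/2,\pi/2)$, and likewise $\frac{c}{r}z + \frac{dr-cs}{k}$ evaluates to $(c\tau+d)/t$, which again lies in the right half-plane; thus the principal square roots agree and no correcting root of unity appears. Once this is settled, the proof is complete.
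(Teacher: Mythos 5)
Your proof is correct and is the standard argument for this statement; the paper itself offers no proof (it simply cites \cite{ScheithauerWeil}, Proposition 6.2), and your factorization $\begin{psmallmatrix} k & 0 \\ 0 & 1\end{psmallmatrix}M = M'\begin{psmallmatrix} r & s \\ 0 & t\end{psmallmatrix}$ with the three divisibility hypotheses guaranteeing $M'\in\SL_2(\Z)$ is exactly how such rescaling formulas are derived. One small slip in your branch discussion: for $c>0$ and $\tau\in\H$ the number $c\tau+d$ lies in the \emph{upper} half-plane (its imaginary part is $c\,\Im\tau>0$), not the right half-plane; the conclusion still stands for the simpler reason that $t$ is a positive real, so $\sqrt{w/t}=\sqrt{w}/\sqrt{t}$ holds for the principal branch whatever the argument of $w$ is.
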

\begin{proposition}[\cite{DHS}, Proposition 5.1]\label{prop:DHS}
Let $N$ be a positive integer. Take integers $r_\delta$ for $\delta\mid N$ such that $\frac{N}{24}\sum_{\delta\mid N}\delta r_\delta$ and $\frac{N}{24}\sum_{\delta\mid N} r_\delta/\delta$ are integers and $\sum_{\delta\mid N} r_\delta$ is even. Then the eta quotient
\[
\prod_{\delta\mid N} \eta(\delta\tau)^{r_\delta}
\]
is a weakly holomorphic modular form for $\Gamma_1(N)$ of weight $k=\sum_{\delta\mid N} r_\delta/2$ and character 

\[
\chi\left(\begin{pmatrix}
a & b \\ c & d 
\end{pmatrix}
\right)=e\left(\frac{b}{24}\sum_{\delta\mid N}\delta r_\delta\right).
\]
\end{proposition}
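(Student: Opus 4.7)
The plan is to verify the transformation law factor by factor, applying Proposition \ref{prop:rescaledEta} to each $\eta_\delta$ individually and multiplying. Fix $M = \begin{psmallmatrix} a & b \\ c & d \end{psmallmatrix} \in \Gamma_1(N)$; after disposing of the stabilizer of $\infty$ separately (where $M = T^b$ and $\eta_\delta|T^b = e(b\delta/24)\eta_\delta$ gives the claimed character immediately), we may assume $c > 0$, replacing $M$ by $-M$ if necessary. For each divisor $\delta\mid N$, Proposition \ref{prop:rescaledEta} applies with the choice $r = \delta$, $t = 1$, $s = 0$: the required conditions $rt = \delta$, $r\mid c$ (since $\delta\mid N\mid c$), and $\delta\mid (dr-cs) = d\delta$ are immediate. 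This gives
\[
\eta_\delta(M\tau) = \varepsilon_\delta(M)\sqrt{c\tau+d}\,\eta_\delta(\tau), \qquad \varepsilon_\delta(M) = \varepsilon\!\left(\begin{pmatrix} a & b\delta \\ c/\delta & d \end{pmatrix}\right).
\]
Taking the product and using that $\sum_\delta r_\delta$ is even so that the square root is unambiguous, one obtains
\[
\prod_{\delta\mid N}\eta_\delta(M\tau)^{r_\delta} = \Bigl(\prod_{\delta\mid N}\varepsilon_\delta(M)^{r_\delta}\Bigr)(c\tau+d)^{k}\prod_{\delta\mid N}\eta_\delta(\tau)^{r_\delta},
\]
so everything reduces to the character identity $\prod_\delta \varepsilon_\delta(M)^{r_\delta} = e\bigl(\tfrac{b}{24}\sum_\delta \delta r_\delta\bigr)$.

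To prove this identity I would substitute the explicit formula from Proposition \ref{prop:etaTransformation} into each $\varepsilon_\delta(M)$, distinguishing the parity of $c/\delta$. Because $M\in\Gamma_1(N)$ gives $d\equiv 1\pmod{N}$, $a\equiv 1\pmod{N}$, and $N\mid c$, the congruence $d\equiv 1\pmod{c/\delta}$ forces the Kronecker symbols $\bigl(\tfrac{d}{c/\delta}\bigr)$ and (via reciprocity, using that $d$ is odd) $\bigl(\tfrac{c/\delta}{d}\bigr)$ to equal $1$. The remaining $24\pi i$-exponents, weighted by $r_\delta$ and summed, break up into a piece proportional to $b\sum_\delta\delta r_\delta$, a piece proportional to $\sum_\delta(c/\delta)r_\delta$, and cross terms involving $(1-d^2)$ or $(1-(c/\delta)^2)$. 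The two integrality hypotheses $\frac{N}{24}\sum_\delta \delta r_\delta\in\Z$ and $\frac{N}{24}\sum_\delta r_\delta/\delta\in\Z$, combined with $N\mid c$ and $N\mid(d^2-1)$, place each non-$b$ summand in $\Z$ and hence kill it in the exponential, leaving precisely $e\bigl(\tfrac{b}{24}\sum_\delta\delta r_\delta\bigr)$.

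Finally, weak holomorphy is automatic: $\eta$ is holomorphic and non-vanishing on $\H$, so the eta quotient is holomorphic on $\H$, and a further application of Proposition \ref{prop:rescaledEta} at any cusp shows that the Fourier expansion there has only finitely many negative terms. The main obstacle is the bookkeeping in the character identity: one must split by the parity of $c/\delta$ (which depends on $\delta$), evaluate the Kronecker symbols uniformly via reciprocity, and confirm that all non-$b$ contributions are precisely the pieces that the two stated integrality conditions are designed to annihilate.
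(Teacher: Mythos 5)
The paper offers no proof of this statement at all --- it is quoted directly from \cite{DHS} --- so there is nothing internal to compare against; your overall strategy (apply Proposition \ref{prop:rescaledEta} to each factor $\eta_\delta$ with $r=\delta$, $t=1$, $s=0$, take the product, and reduce the claim to the multiplier identity $\prod_\delta\varepsilon_\delta(M)^{r_\delta}=e\bigl(\tfrac{b}{24}\sum_\delta\delta r_\delta\bigr)$) is the standard route and certainly the one taken in the source. Your handling of the exponential part of the multiplier is also essentially correct: the summands not proportional to $b\sum_\delta\delta r_\delta$ assemble into integer multiples of $\tfrac{N}{24}\sum_\delta\delta r_\delta$ and $\tfrac{N}{24}\sum_\delta r_\delta/\delta$ times $c/N$, $c^2/N$ or $(d-1)/N$, and the hypotheses annihilate them.

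The genuine gap is in the Kronecker symbols. You assert that each individual symbol $\legendre{d}{c/\delta}$ (resp.\ $\legendre{c/\delta}{d}$) equals $1$ because ``$d\equiv 1\pmod{c/\delta}$''. But membership in $\Gamma_1(N)$ only gives $d\equiv 1\pmod N$, while $c$ can be any multiple of $N$, so $c/\delta$ need not divide $N$ and the congruence fails. Concretely, $M=\begin{psmallmatrix}13&6\\15&7\end{psmallmatrix}\in\Gamma_1(3)$ has $\legendre{d}{c/1}=\legendre{7}{15}=-1$ and $\legendre{d}{c/3}=\legendre{7}{5}=-1$. What is true, and what a correct proof must show, is that the \emph{product} $\prod_\delta(\cdot)^{r_\delta}$ of these symbols is $1$: within a single parity branch this follows from multiplicativity, $\legendre{d}{c/\delta}=\legendre{d}{c/N}\legendre{d}{N/\delta}=\legendre{d}{c/N}$ (since $d\equiv1\pmod{N/\delta}$), and the evenness of $\sum_\delta r_\delta$; but in general the two branches of Proposition \ref{prop:etaTransformation} mix as $\delta$ varies, and one must use reciprocity to see that the residual quadratic character $d\mapsto\legendre{(-1)^k\prod_\delta\delta^{r_\delta}}{d}$ has conductor dividing $N$ --- a consequence of the two integrality hypotheses that your sketch never extracts. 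Since this is the real content of the proposition rather than bookkeeping, the argument as written does not go through, although it is repairable along the lines just indicated.
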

\begin{proposition}\label{prop:etaCuspForm}
Suppose $N$ is squarefree. Then the eta quotient from the last proposition is a cusp form if and only if 
\[
\sum_{\delta\mid N} \frac{(\delta,c)}{(\delta,c')} r_\delta>0
\]
for all $c\mid N$.
\end{proposition}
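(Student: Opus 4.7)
By Proposition \ref{prop:DHS} the eta quotient $f = \prod_{\delta\mid N} \eta(\delta\tau)^{r_\delta}$ is a weakly holomorphic modular form for $\Gamma_1(N)$, so being a cusp form means vanishing at every $\Gamma_1(N)$-cusp. The plan is to compute, for an arbitrary cusp $a/c$ of $\Gamma_1(N)$, the leading power of $q$ in the Fourier expansion of $f|_k M$ at $\infty$, where $M \in \SL_2(\Z)$ sends $\infty$ to $a/c$, applying Proposition \ref{prop:rescaledEta} factor by factor. Vanishing at the cusp is equivalent to this leading power being strictly positive (the width of the cusp only rescales the exponent).

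Applying Proposition \ref{prop:rescaledEta} to each $\eta(\delta M\tau)$ with the splitting $r = (\delta, c)$ and $t = \delta/r$, squarefreeness of $N$ forces $(r, t) = 1$ and $(t, c/r) = 1$ (any common prime $p$ would yield $p^2 \mid N$); the latter permits solving $\delta \mid dr - cs$ for an appropriate integer $s$. The resulting factor $\eta((r\tau + s)/t)$ contributes a leading $q$-power $r/(24t) = (\delta, c)^2/(24\delta)$. Since $\delta \mid N$ is squarefree, $(\delta, c)$ depends only on $(c, N)$, and hence so does the total leading $q$-power of $f|_k M$.

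It therefore suffices to verify vanishing at the representative cusps $1/c$ for $c\mid N$, each of which realizes $(c, N) = c$. Summing the contributions at $1/c$ gives the leading $q$-power of $f|_k M_c$ as $A_c/24$ with
\[
A_c = \sum_{\delta\mid N} \frac{(\delta, c)^2}{\delta}\, r_\delta,
\]
so vanishing at $1/c$ is equivalent to $A_c > 0$. Finally, $cc' = N$ with $(c, c') = 1$ implies $\delta = (\delta, c)(\delta, c')$ for every $\delta\mid N$, hence $(\delta, c)^2/\delta = (\delta, c)/(\delta, c')$, matching the condition in the statement.

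The only mild technical points are the verification of the hypotheses of Proposition \ref{prop:rescaledEta} and the reduction from arbitrary $\Gamma_1(N)$-cusps to those with denominator dividing $N$; both are routine gcd manipulations exploiting squarefreeness of $N$.
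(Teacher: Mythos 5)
Your proof is correct and follows essentially the same route as the paper: apply Proposition \ref{prop:rescaledEta} to each factor $\eta(\delta\tau)$ with the splitting $r=(\delta,c)$, $t=\delta/(\delta,c)=(\delta,c')$, sum the leading exponents $r_\delta\,r/(24t)$, and reduce to the cusps $1/c$ with $c\mid N$ since everything depends only on $(c,N)$. The only cosmetic differences are that the paper fixes $s=0$ and $c'\mid d$ at the outset rather than solving for $s$ afterwards, and writes the exponent directly as $(\delta,c)/(\delta,c')$ instead of $(\delta,c)^2/\delta$.
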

\begin{proof}
Let $a/c\in \Q\cup\infty$ be a cusp and let $c'=N/(N,c)$. We can suppose that $c>0$, because every cusp of $\Gamma_1(N)$ is equivalent to one with $c>0$. We choose $b$ and $d$ such that the matrix
$
M_{a/c}=\begin{psmallmatrix}
a & b \\ c & d
\end{psmallmatrix}$ is in $\SL_2(\Z)$ and $c'\mid d$. We want to apply \ref{prop:rescaledEta} to compute $\eta_\delta(M_{a/c}\tau)$ and note that we can choose $r=(c,\delta)$, $t=(c',\delta)$ and $s=0$. Therefore
\[
\eta_\delta(M_{a/c}\tau)=\lambda \sqrt{c\tau+d}\eta\left(\frac{(\delta,c)}{(\delta,c')}\tau\right)
\]
for some nonzero complex number $\lambda$. It follows that the smallest power of $q$ occuring in the Fourier expansion of
\[
\prod_{\delta\mid N}\eta(\delta\tau)^{r_\delta}|_{k}M_{a/c}
\] 
is
\[
\frac{1}{24}\sum_{\delta\mid N}\frac{(\delta,c)}{(\delta,c')} r_\delta,
\]
so that the eta quotient vanishes at the cusp $a/c$ if and only if the sum is positive. Since $(\delta,c)$ and $(\delta,c')$ only depend on $(c,N)$ it suffices to consider those $c$ that divide $N$.
\end{proof}

\section{Strongly reflective automorphic forms of singular weight}
Theorem \ref{thm1} states that there are only finitely many even lattices $L$ of squarefree level $N$ that split $\II_{1,1}\oplus\II_{1,1}(N)$ and carry a nonconstant reflective automorphic form for the discriminant kernel $\Gamma(L)$ and we have found bounds on the rank $n+2$ and the level $N$ of $L$ in Section 4. In this section we examine the remaining cases to find all strongly reflective automorphic forms of singular weight $(n-2)/2$ for $\Gamma(L)$. 

In the following $F$ will always be a strongly reflective modular form on an even lattice $L$ of squarefree level $N$ that splits $\II_{1,1}\oplus \II_{1,1}(N)$, as the study of strongly reflective automorphic forms for $\Gamma(L)$ can be reduced to the study of such $F$ by Theorem \ref{thm:converse} and Proposition \ref{prop:reflectiveProduct}.

Given $F$ and a divisor $d$ of $N$ we let
\[
M_d=\{\gamma\in D: F_\gamma=q^{-1/d}+O(1)\},
\]
and $c_d=|M_d|$. Note that the elements in $M_d$ all have order $d$ and norm $1/d\mod 1$ by Proposition \ref{prop:reflectiveProduct}. These numbers $c_d$ have to satisfy certain conditions, which we will now derive.
\begin{enumerate}[label=\Alph*.]
\item The numbers $c_d$ are obviously integers and satisfy $0\leq c_d\leq N(D_d,1)$. Moreover, since $-\Id=S^2\in\SL_2(\Z)$ acts in the Weil representation by sending $\e_\gamma\in D$ to $e(\sign(D)/4)\e_{-\gamma}$, the identity
\[
F_\gamma(\tau)=F_\gamma(-\Id\tau)=(-1)^{1-n/2}e(\sign(D)/4)F_{-\gamma}(\tau)=F_{-\gamma}(\tau)
\]
holds. Consequently all $c_d$ with $d>2$ are even integers.
\item We let 
\[
G=\frac{1}{|\Aut(D)|}\sum_{\sigma\in\Aut(D)}\sigma(F)
\]
be the symmetrization of $F$. Then $G$ is semi-reflective and symmetric. Let $\gamma\in D$ have order $d$ and norm $1/d\mod 1$. Then 
\[
G_\gamma=\tilde{c}_dq^{-1/d}+O(1)
\]
where $\tilde{c}_d=c_d/N(D_d,1)$. Since $G$ is symmetric, there exists a weakly holomorphic modular form $f$ of weight $1-n/2$ and character $\chi_D$ for $\Gamma_0(N)$ such that $G=F_{\Gamma_0(N),f,0}$ (see Corollary 5.5 in \cite{ScheithauerModForms}).

We have seen in the proof of Lemma \ref{lem:polesOfScalarForm} that $f|_{1-n/2}M_d=O(q^{-1/d'})$ for $d\mid N$. We let $a_d$ be the coefficient of $f|_{1-n/2}M_d$ at $q^{-1/d'}$. From the formula
\[
G_\gamma = \sum_{l\mid d'} \xi_l\frac{\sqrt{|D_l|}}{\sqrt{|D|}}l'g_{l',j_{\gamma,l'}}
\]
for the $\Gamma_0(N)$-lift (see Section 3), we find that
\[
\tilde{c}_d=\xi_{d'}\frac{\sqrt{|D_{d'}|}}{\sqrt{|D|}}da_{d'}
\]
or equivalently
\[
a_d=\xi_{d}^{-1}\frac{\sqrt{|D|}}{\sqrt{|D_{d}|}}\frac{c_{d'}}{d'\cdot N(D_{d'},1)}.
\]
Now let $g$ be a cusp form of weight $k=1+n/2$ for $\Gamma_0(N)$ with character $\chi_D$. Then $f\cdot g$ is a weakly holomorphic modular form of weight $2$ and trivial character for $\Gamma_0(N)$, so
\[
p=\sum_{M\in \Gamma_0(N)\backslash \SL_2(\Z)} (f\cdot g)|_2M
\]
is a weakly holomorphic modular form of weight $2$ for $\SL_2(\Z)$. Then $p$ can be identified with a meromorphic differential on the modular curve $X(1)$. Since $p$ is weakly holomorphic, this differential is holomorphic at all points except $\infty$. By the residue theorem, its residue at $\infty$ must vanish. But the residue at $\infty$ is (up to a nonzero constant) exactly the constant term in the Fourier expansion of the modular form $p$ at $\infty$, which is therefore $0$. Let $P$ be the set of cusps for $\Gamma_0(N)$. Then
\begin{align*}
p=\sum_{M\in \Gamma_0(N)\backslash \SL_2(\Z)} (f\cdot g)|_2M
=\sum_{s\in P}\sum_{\substack{M\in\Gamma_0(N)\backslash\SL_2(\Z)\\ M\infty =s}}(f\cdot g)|_2M.
\end{align*}
Note that the cusps of $\Gamma_0(N)$ are of the form $1/c$ for $c\mid N$ and that a set of representatives for the cosets of $\Gamma_0(N)$  in $\SL_2(\Z)$ sending $\infty$ to $1/c$ is given by $M_cT^j$ where $j=0,\ldots, t_c-1$ and $t_c=N/(N,c^2)$ (which is equal to $N/c=c'$ since $N$ is squarefree) is the width of $1/c$. Therefore
\begin{align*}
p&=\sum_{c\mid N} \sum_{j=0}^{t_c-1}(f\cdot g)|_2M_cT^j
\end{align*}
To obtain the constant Fourier coefficient of $p$ we must therefore add the constant Fourier coefficients of the functions $(f\cdot g)|_2M_cT^j$. But the constant coefficient of $(f\cdot g)|_2M_cT^j$ is equal to that of $(f\cdot g)|_2M_c$ for all $j$, so the constant coefficient of $p$ is given by
\begin{align*}
\sum_{c\mid N} t_c\cdot  [(f\cdot g)|_2M_c](0)&=\sum_{c\mid N} c' \sum_{\substack{\alpha\in \frac{1}{c'}\Z\\\alpha>0}}[f|_{1-n/2}M_c](-\alpha)[g|_{1+n/2}M_c](\alpha)\\
&=\sum_{c\mid N} c'\cdot  [f|_{1-n/2}M_c](-1/c')[g|_{1+n/2}M_c](1/c')
\end{align*}
where in the last step we have used that $f|_{1-n/2}M_c=O(q^{-1/c'})$. Letting $b_c$ be the coefficient of $g|_{1+n/2}M_c$ at $q^{1/c'}$, we have thus shown that
\[
0=\sum_{d\mid N} d'a_db_d=\sum_{d\mid N} \xi_{d}^{-1}\frac{\sqrt{|D|}}{\sqrt{|D_{d}|}}\frac{c_{d'}}{\cdot N(D_{d'},1)}b_d.
\]
We have described how to compute the coefficients $b_d$ for newforms and cusp forms that arise from newforms of lower levels in Section 5. One can always find a basis consisting of such cusp forms, so we can in fact compute all conditions coming from cusp forms. 

\item Another condition arises from a vector-valued Eisenstein series and has been described in Theorem 11.1 in \cite{ScheithauerClass}. In contrast to the other two conditions it uses that $[F_0](0)=n-2$, which follows from $\Psi$ having singular weight. In \cite{ScheithauerClass} $F$ is required to be symmetric, but this condition is not necessary if one replaces $c_dN_d$ by $c_d$ (note that these are two different $c_d$, the first is the one from \cite{ScheithauerClass}, which in our notation would be $\tilde{c}_d$, while the second one is as defined above). With this adjustment, Theorem 11.1 from \cite{ScheithauerClass} states that
\begin{equation}\label{eqn:Eisenstein}
\frac{k}{k-2}\frac{1}{B_{k,\psi}}\frac{L(k,\psi)}{L(k,\chi)}\frac{m^k}{N^k}\sum_{cd\mid N}\varepsilon_{c,d}c_d\frac{\sqrt{m_c|D_c|}}{\sqrt{m|D|}}\frac{N^k}{c^kd^{k-1}}=1
\end{equation}
where we use the following notation:
\allowdisplaybreaks
\begin{align*}
k&=1+n/2 \\
\chi&=\chi_D\\
m &\text{ is the conductor of } \chi \\
\psi &\text{ is the primitive character of modulus } m \text{ that induces } \chi \\
B_{k,\psi} &\text{ is the generalized Bernoulli number }\\
L(k,\cdot) &\text{ is the Dirichlet $L$-function of the character in the second}\\
&\text{ argument evaluated at } k\\
m_c&=(m,c)\\
\varepsilon_{c,d}&=\psi_c(N^2/(cdm_c))\psi_{c'}(-c)\frac{\psi_c(2)}{\psi(2)}\frac{\varepsilon_c}{\varepsilon}b_c\\
\psi_c,\psi_{c'} &\text{ are the unique characters of modulus } c \text{ and } c'\text{ such that } \psi=\psi_c\psi_{c'}\\
\varepsilon_c&=\prod_{p\mid c/m_c}\epsilon_p\legendre{-1}{p}^{n_p/2}\prod_{p\mid m_c}\epsilon_p\legendre{m_c/p}{p}\legendre{-1}{p}^{(n_p+1)/2}\\
\varepsilon&=\varepsilon_N\\
b_c&=\prod_{p\mid c/m_c}(-1)
\end{align*}
\end{enumerate}
The following proposition can be proved by computing the conditions A, B and C for the finitely many possible lattices from Table \ref{tab1}.
\begin{proposition}\label{prop:classificationSymmetric}
Let $L$ be an even lattice of squarefree level $N$ and signature $(n,2), n\geq 4$ such that $L$ splits $\II_{1,1}\oplus \II_{1,1}(N)$ and $F$ a strongly reflective modular form on $D=L'/L$ with $[F_0](0)=n-2$. If $F$ is symmetric, then it is of the form $F=F_{\Gamma_0(N),f,0}$ for one of the functions $f$ from Table \ref{tabIntro}.
\end{proposition}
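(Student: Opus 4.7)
The plan is to combine the effectivity of Theorem \ref{thm1} with the three families of linear conditions A, B, C developed above to reduce the classification to a finite, explicit check.

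First, Theorem \ref{thm1} together with Table \ref{tab1} supplies an explicit finite list of admissible pairs $(n,N)$. For each such pair I would enumerate all discriminant forms $D=\prod_{p\mid N}p^{\epsilon_p n_p}$ arising from even lattices of signature $(n,2)$ and squarefree level $N$ that split $\II_{1,1}\oplus \II_{1,1}(N)$: the discussion at the end of Section 2 fixes $\epsilon_p$ in the boundary cases $n_p=2$ and $n_p=n$, Milgram's formula constrains the global signature, and only finitely many tuples $(\epsilon_p,n_p)_{p\mid N}$ remain to check.

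Second, for each such $D$ I would determine all tuples $(c_d)_{d\mid N}$ of nonnegative integers satisfying conditions A, B and C. Condition A contributes integrality, the parity constraint for $d>2$, and the bound $c_d\leq N(D_d,1)$. Condition B contributes one linear relation for each element of a basis of $S_{1+n/2}(\Gamma_0(N),\chi_D)$; such a basis can be taken to consist of newforms together with their images under the operators $\tau\mapsto N\tau/M$ coming from divisors $M\mid N$, and the Fourier expansions of these functions at every cusp $1/c$ are given by the formulas of Section 5. Condition C is the single Eisenstein equation \eqref{eqn:Eisenstein}. Together these yield a finite system over $\Q$ with integrality and inequality constraints, so only finitely many tuples survive.

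Third, each surviving tuple reconstructs $f$ uniquely: by condition B the coefficient of $f|_{1-n/2}M_c$ at $q^{-1/c'}$ equals
\[
a_c=\xi_c^{-1}\frac{\sqrt{|D|}}{\sqrt{|D_c|}}\frac{c_{c'}}{c'\cdot N(D_{c'},1)},
\]
and by Lemma \ref{lem:polesOfScalarForm} this single coefficient is the entire principal part of $f$ at the cusp $1/c$. Since the weight $1-n/2$ is negative, the valence formula for $\Gamma_0(N)$ forces any modular form of this weight that is holomorphic at all cusps to vanish, so $f$ is determined by the collection of its principal parts. I would then match the reconstructed $f$ against Table \ref{tabIntro}, recognising the resulting functions as the listed eta quotients with the help of Propositions \ref{prop:DHS} and \ref{prop:etaCuspForm}.

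The main obstacle is the size of the case analysis rather than any conceptual difficulty: for each admissible $(n,N,D)$ one must set up a linear system whose coefficients involve Gauss sums, Dirichlet $L$-values at $k=1+n/2$ and Fourier coefficients of newforms at every cusp of $\Gamma_0(N)$, and verify both that every solution corresponds to a tuple already present in Table \ref{tabIntro} and that no additional solution appears. Carrying this out by hand for the larger levels is impractical, so in practice the conditions A, B and C would be implemented on a computer and looped over the finitely many lattices permitted by Table \ref{tab1}.
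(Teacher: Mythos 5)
Your proposal follows essentially the same route as the paper: restrict to the finitely many lattices permitted by Theorem \ref{thm1} and Table \ref{tab1}, impose conditions A, B and C to pin down the tuples $(c_d)$, and then recover $f$ from its principal parts at the cusps (the paper leaves this last reconstruction step implicit, but your justification via the valence formula in negative weight is the right one). The paper likewise acknowledges that the verification is a large but finite case analysis and only works out one representative case in detail.
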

\begin{proof}
We give one example, namely the case where $n=12$ and $N=3$. In this case $n_3$ must be odd by the formula for the signature of the Jordan components, so $\chi=\chi_D=\legendre{\cdot}{3}$. To compute the conditions B we observe that the space of cusp forms of weight $k=1+n/2=7$ and character $\chi$ for $\Gamma_0(3)$ has dimension 1 and is spanned by a newform $g$ with Fourier expansion
\[
g(\tau)=q-27q^3+64q^4+O(q^6).
\]  
There are two classes of cusps for $\Gamma_0(3)$, namely $1/c$ for $c=1,3$.  We let
\[
M_1=\begin{pmatrix}
1 & 2 \\ 1 & 3
\end{pmatrix} \text{ and } M_3=\begin{pmatrix}
1 & 0 \\ 3 & 1 
\end{pmatrix}.
\]
Then $M_3\in \Gamma_0(3)$ and
\[
g|_7M_3(\tau)=g(\tau)=q-27q^3+64q^4+O(q^6),
\]
so the coefficient $b_3=[g|_7M_3](1)=1$. Using Proposition \ref{prop:expansionNewforms} we find that
\begin{align*}
g|_7M_1(\tau)&=\chi_3(-1)\chi_1^{-1}(3)\cdot 3^{-7/2}g|_7W_3(\tau/3)\\
&=-3^{-7/2}g|_7W_3(\tau/3).
\end{align*}
By Proposition \ref{prop:Asai},
\[
g|_7W_3(\tau)=\lambda \sum_{n=1}^{\infty} a_n^{(3)}q^n
\]
with 
\begin{align*}
\lambda &= \chi_3(1)\chi_1(3)\chi_3(1)G(\chi_3)3^{-7/2}\overline{a_3}\\
&=\sqrt{3}\cdot i\cdot 3^{-7/2}\cdot (-27)\\
&=-i.
\end{align*}
It follows that
\[
g|_7M_1(\tau)=3^{-7/2}\cdot i \sum_{n=1}^\infty a_n^{(3)}q^{n/3}
\]
and $b_1=[g|_7M_1](1/3)=3^{-7/2}\cdot i \cdot a_1^{(3)}=3^{-7/2} i$. 

Therefore condition B states that 
\[
\xi_1^{-1}\frac{c_3\cdot \sqrt{|D|}}{N(D_3,1)}\cdot3^{-7/2}\cdot i+ \xi_3^{-1}c_1=0,
\]
where we have used that $N(D_1,1)=1$. Note that $\xi_3=1$ and that 
\[
\xi_1=e(\sign(D)/8)\legendre{-1}{|D|}=e((n-2)/8)\cdot (-1)=-e(1/4)=-i,\] so
\[
i\cdot \frac{c_3\cdot \sqrt{|D|}}{N(D_3,1)}\cdot 3^{-7/2}\cdot i+c_1=0,
\]
or equivalently
\[
c_3=\frac{3^{7/2}\cdot c_1\cdot N(D_3,1)}{\sqrt{|D|}}.
\]
To compute condition C we note that $\chi=\psi$, so $m=3$ and $B_{k,\psi}=98/3$. Then \eqref{eqn:Eisenstein} simplifies to
\[
\frac{7}{5}\cdot \frac{3}{98}\left(\varepsilon_{1,1}c_1\frac{1}{\sqrt{3|D|}}\cdot 3^7+\varepsilon_{1,3}c_3\frac{1}{\sqrt{3|D|}}\cdot 3+\varepsilon_{3,1}c_1\right)=1.
\]
To compute the signs $\varepsilon_{c,d}$ we note that $\legendre{-1}{3}^{(n_3+1)/2}=\epsilon_3$ by the signature formula for the Jordan components. A quick calculation then gives 
\[
\varepsilon_{1,1}=\varepsilon_{1,3}=\varepsilon_{3,1}=1,
\]
and hence
\[
\left(1+\frac{3^6\sqrt{3}}{\sqrt{|D|}}\right)c_1+\frac{\sqrt{3}}{\sqrt{|D|}}c_3=\frac{70}{3}.
\]
By condition A the number $c_1$ must be 0 or 1. If it is 0, then conditions B and C obviously have no common solution. Therefore $c_1=1$, so
\[
c_3=\frac{3^{7/2}\cdot N(D_3,1)}{\sqrt{|D|}}=3^{7/2}\left(\frac{\sqrt{|D|}}{3}+\frac{\sqrt{3}}{3}\right)
\]
by condition B. Inserting this into condition C gives
\[
28+756\frac{\sqrt{3}}{\sqrt{|D|}}=\frac{70}{3},
\]
which has no real solution for $|D|$. This completes the proof for this case. The other cases are similar.
\end{proof}
\renewcommand{\arraystretch}{1.2}
The result for the non-symmetric case is the following.
\begin{proposition}
Let $L$ be an even lattice of squarefree level $N$ and signature $(n,2), n\geq 4$ such that $L$ splits $\II_{1,1}\oplus \II_{1,1}(N)$ and $F$ a strongly reflective modular form on $D=L'/L$ with $[F_0](0)=n-2$. If $F$ is not symmetric, then $L$ and the numbers $c_d$ are one of the following.
\begin{center}
\begin{longtable}{c|c|c|c}
$n$ & $N$ & $L$ & \\ \hline
\multirow{4}{*}{10} & \multirow{4}{*}{2} & $\II_{10,2}(2_{\II}^{+4})$ & $c_1=0, c_2=2$ \\ \cline{3-4}
   &   & $\II_{10,2}(2_{\II}^{+6})$ & $c_1=0, c_2=4$ \\ \cline{3-4}
   &   & $\II_{10,2}(2_{\II}^{+8})$ & $c_1=0, c_2=8$ \\ \cline{3-4}
   &   & $\II_{10,2}(2_{\II}^{+10})$ & $c_1=0, c_2=16$ \\ \hline 
\multirow{2}{*}{8} & \multirow{2}{*}{3} & $\II_{8,2}(3^{+5})$ & $c_1=0, c_3=18 $ \\ \cline{3-4}
  &   & $\II_{8,2}(3^{-7})$ & $c_1=0, c_3=54 $ \\ \hline
\multirow{6}{*}{6} & 3 & $\II_{6,2}(3^{-4})$ & $c_1=0, c_3=4$ \\ \cline{2-4}
  & 5  & $\II_{6,2}(5^{+5})$ & $c_1=0, c_5=100$ \\ \cline{2-4}
  & \multirow{4}{*}{6} &$\II_{6,2}(2_{\II}^{-4}3^{+4})$ & $c_1=c_2=0,c_3=6,c_6=60$ \\ \cline{3-4}
& & $\II_{6,2}(2_{\II}^{-4}3^{-6})$ & $c_1=c_2=0,c_3=18,c_6=180$ \\ \cline{3-4}
& & $\II_{6,2}(2_{\II}^{+4}3^{-4})$ & $c_1=0,c_2=2,c_3=0,c_6=60$ \\ \cline{3-4}
& & $\II_{6,2}(2_{\II}^{+6}3^{-4})$ & $c_1=0,c_2=4,c_3=0,c_6=120$ \\ \hline
4 & 14  & $\II_{4,2}(2_{\II}^{+4}7^{-3})$ & $c_1=0,c_2=2,c_7=0,c_{14}=112$
\end{longtable}
\end{center}
\end{proposition}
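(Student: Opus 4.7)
The plan is to run the same machinery as in the proof of Proposition~\ref{prop:classificationSymmetric}, but without imposing the constraint that $F$ be $\Aut(D)$-invariant. By Theorem~\ref{thm1} together with Table~\ref{tab1}, the rank $n+2$ and level $N$ are restricted to finitely many possibilities; for each such pair $(n,N)$ the constraints of Section~2 on the signs $\epsilon_p$ and ranks $n_p$ of the Jordan components ($p\mid N$) leave only a small explicit list of candidate discriminant forms $D=L'/L$, and hence of candidate lattices $L$.

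For each such $L$ I would impose the three conditions A, B and C on the integer tuple $(c_d)_{d\mid N}$ derived at the start of this section. Condition A records the non-negativity, the upper bound $c_d\leq N(D_d,1)$, and the parity constraint $c_d\equiv 0\pmod{2}$ for $d>2$. Condition B, via the formula $a_d=\xi_d^{-1}\sqrt{|D|/|D_d|}\,c_{d'}/(d'\,N(D_{d'},1))$ and the residue-theorem argument, produces one linear equation in the $c_d$ for each element of a basis of $S_{1+n/2}(\Gamma_0(N),\chi_D)$; the coefficients $b_c$ of such an equation are computed via Propositions~\ref{prop:Asai} and~\ref{prop:expansionNewforms} (and the two subsequent oldform propositions) applied to newforms of each divisor of $N$. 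Condition C is the single Eisenstein relation~\eqref{eqn:Eisenstein}, whose numerical data (Bernoulli numbers, $L$-values, Gauss sums, and the signs $\varepsilon_{c,d}$) are explicit once $L$ is fixed. Solving the resulting Diophantine system for each candidate $L$ and then discarding the solutions already accounted for by Proposition~\ref{prop:classificationSymmetric} and Table~\ref{tabIntro} should yield exactly the table in the statement.

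The main obstacle is that this amounts to a substantial finite case analysis rather than a single clean argument: for every candidate $L$ one must compute a basis of the corresponding cusp form space $S_{1+n/2}(\Gamma_0(N),\chi_D)$, expand each basis element at every cusp $1/c$ of $\Gamma_0(N)$ with the correct Atkin--Lehner normalization, substitute the resulting leading coefficients into condition~B, and separately evaluate the Eisenstein relation with the correct $\xi_c$-signs; in practice this is best handled by computer, in the same spirit as the worked case $(n,N)=(12,3)$ in the proof of Proposition~\ref{prop:classificationSymmetric}. A subtlety worth flagging is the parity clause in condition A: because $c_d$ is forced to be even only for $d>2$, genuinely non-symmetric solutions can and do appear at prime order $d=2$, which is consistent with the $N=2$ and $N\in\{6,14\}$ rows of the listed table. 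In each surviving case one still has to verify that the solution cannot be realized by any symmetric $F$, which follows either because the corresponding Table~\ref{tabIntro} entry gives different $(c_d)$ or because the candidate $L$ does not appear in that table at all.
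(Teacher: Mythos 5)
Your overall framework (bounding $(n,N)$ via Theorem \ref{thm1}, enumerating the candidate discriminant forms, and imposing conditions A, B, C on the tuple $(c_d)$) matches the first stage of the paper's argument, and your observation about the parity clause at $d=2$ is on point. But there is a genuine gap: conditions A, B and C are \emph{not} sufficient to produce the stated table. The paper's proof makes this explicit by first listing the (considerably larger) set of tuples that survive A, B, C --- for instance $\II_{8,2}(2_{\II}^{+4}3^{-3})$ with $c_1=0,c_2=3,c_3=6,c_6=36$, several extra solutions on $\II_{6,2}(2_{\II}^{+4}3^{-4})$, cases at $N=10,15,30$, etc. --- none of which appear in the final answer. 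These spurious solutions cannot be ``discarded'' by comparison with Proposition \ref{prop:classificationSymmetric} or Table \ref{tabIntro}; they simply have to be ruled out by further argument, and that elimination is the bulk of the proof.

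The missing idea is that condition B only sees the symmetrization $G$ of $F$, i.e.\ only the averaged quantities $\tilde c_d=c_d/N(D_d,1)$, because it pairs the single scalar form $f$ with cusp forms for $\Gamma_0(N)$ with character $\chi_D$. To go further one must exploit that each individual component $F_\gamma$ is a weakly holomorphic form of weight $1-n/2$ and character $\chi_\gamma$ for $\Gamma_1(N)$, and pair it (via the same residue-theorem mechanism) with cusp forms of weight $1+n/2$ and character $\overline{\chi}_\gamma$ for $\Gamma_1(N)$ --- in practice eta quotients, expanded at all cusps of $\Gamma_1(N)$ using \eqref{eqn:ComponentsTransformation} and Propositions \ref{prop:etaTransformation} and \ref{prop:rescaledEta}. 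This introduces the finer combinatorial invariants $a_\gamma^d=|M_d\cap\gamma^\perp|$ and $N_\gamma^{d,e}$ (counts of elements of $M_e$ projecting onto $\gamma$), whose integrality, parity and obvious bounds ($0\le N_\gamma^{d,e}\le a_\gamma^e\le c_e$, $\sum_\beta N_\beta^{2,6}\le c_6$, realizability of orthogonality patterns inside $D$, etc.) are what actually kill the extra cases. Your proposal never leaves the $\Gamma_0(N)$ / symmetrized level of information, so it cannot distinguish these cases. A secondary point: the paper handles $N=1$ and $N$ prime by citing Theorem 6.27 of \cite{ScheithauerSingular} rather than re-deriving them, but that is a matter of economy rather than a flaw in your plan.
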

\begin{proof}
If $N=1$ or $N$ is prime, then this is part of Theorem 6.27 in \cite{ScheithauerSingular}. In the other cases conditions A, B and C give the following list of possibilities:
\begin{center}
\begin{longtable}{c|c|c|c}
$n$ & $N$ & $L$ & \\ \hline
\multirow{4}{*}{8}
 & \multirow{4}{*}{6} &$\II_{8,2}(2_{\II}^{+4}3^{-3})$ &  $c_1=0,c_2=3,c_3=6,c_6=36$\\ \cline{3-4}
& &$\II_{8,2}(2_{\II}^{+6}3^{-3})$ & $c_1=0,c_2=7,c_3=6,c_6=84$\\ \cline{3-4}
&  &$\II_{8,2}(2_{\II}^{+4}3^{-7})$ &  $c_1=1,c_2=3,c_3=0,c_6=2268$ \\ \cline{3-4}
&  &$\II_{8,2}(2_{\II}^{+6}3^{-7})$ &  $c_1=1,c_2=7,c_3=0,c_6=5292$\\ \hline
\multirow{17}{*}{6} & \multirow{14}{*}{6} &\multirow{3}{*}{$\II_{6,2}(2_{\II}^{-4}3^{+4})$} & $c_1=c_2=0,c_3=6,c_6=60$ \\ \cline{4-4}
& & & $c_1=0,c_2=1,c_3=6,c_6=36$ \\ \cline{4-4}
& & & $c_1=0,c_2=2,c_3=6,c_6=12$ \\ \cline{3-4}
& & \multirow{2}{*}{$\II_{6,2}(2_{\II}^{-4}3^{-6})$} & $c_1=c_2=0,c_3=18,c_6=180$ \\ \cline{4-4}
& & & $c_1=0,c_2=2,c_3=24,c_6=84$\\ \cline{3-4}
& & \multirow{6}{*}{$\II_{6,2}(2_{\II}^{+4}3^{-4})$} & $c_1=0,c_2=2,c_3=0,c_6=60$ \\ \cline{4-4}
& & & $c_1=0,c_2=c_3=2,c_6=48$ \\ \cline{4-4}
& & & $c_1=0,c_2=2,c_3=4,c_6=36$ \\ \cline{4-4}  
& & & $c_1=0,c_2=2,c_3=6,c_6=24$ \\ \cline{4-4} 
& & & $c_1=0,c_2=2,c_3=8,c_6=12$ \\ \cline{4-4}  
& & & $c_1=0,c_2=2,c_3=10,c_6=0$ \\ \cline{3-4}
& & \multirow{3}{*}{$\II_{6,2}(2_{\II}^{+6}3^{-4})$} & $c_1=0,c_2=4,c_3=0,c_6=120$ \\ \cline{4-4} 
& & & $c_1=0,c_2=5,c_3=6,c_6=66$ \\ \cline{4-4}
& & & $c_1=0,c_2=6,c_3=12,c_6=12$\\ \cline{2-4}
& \multirow{3}{*}{10} & $\II_{6,2}(2_{\II}^{+4}5^{+3})$ & $c_1=0,c_2=3,c_5=20,c_{10}=90$\\ \cline{3-4}
& & \multirow{2}{*}{$\II_{6,2}(2_{\II}^{+4}5^{+5})$} &  $c_1=0,c_2=5,c_5=200,c_{10}=650$\\ \cline{4-4}
& &  &  $c_1=1,c_2=3,c_5=0,c_{10}=1950$\\  \hline
\multirow{10}{*}{4} & \multirow{4}{*}{6} & \multirow{4}{*}{$\II_{4,2}(2_{\II}^{-4}3^{-3})$} & $c_1=0,c_2=1,c_3=c_6=0$ \\ \cline{4-4}
& & & $c_1=c_2=0=c_3=0,c_6=12$ \\ \cline{4-4}
& & & $c_1=c_2=0,c_3=2,c_6=8$ \\ \cline{4-4}
& & & $c_1=c_2=0,c_3=c_6=4$ \\ \cline{2-4}
& 14 &  $\II_{4,2}(2_{\II}^{+4}7^{-3})$ & $c_1=0,c_2=2,c_7=0,c_{14}=112$\\ \cline{2-4}
& 15 & $\II_{4,2}(3^{+3}5^{-3})$ & $c_1=0,c_3=4,c_5=10,c_{15}=0$ \\ \cline{2-4}  
& \multirow{4}{*}{30} & $\II_{4,2}(2_{\II}^{+4}3^{-3}5^{+3})$ &  $c_1=0,c_2=3,c_3=6,c_5=20,c_6=36,c_{10}=90,c_{15}=0,c_{30}=360$ \\ \cline{3-4}
& & \multirow{3}{*}{$\II_{4,2}(2_{\II}^{+4}3^{+3}5^{-3})$}  &  $c_1=c_2=1,c_3=8,c_5=20,c_6=30,c_{10}=100,c_{15}=120,c_{30}=360$\\ \cline{4-4}
&  & &  $c_1=1,c_2=2,c_3=4,c_5=10,c_6=24,c_{10}=80,c_{15}=120,c_{30}=720$\\ \cline{4-4}
& & & $c_1=1,c_2=3,c_3=c_5=0,c_6=18,c_{10}=60,c_{15}=120,c_{30}=1080$ 
\end{longtable}
\end{center}
For these remaining cases we proceed as follows. Let $\gamma\in D$. Then $F_\gamma$ is a weakly holomorphic modular form of weight $1-n/2$ and character $\chi_\gamma$ for $\Gamma_1(N)$. If $h$ is a cusp form of weight $k=1+n/2$ and character $\overline{\chi}_\gamma$ for $\Gamma_1(N)$, then
\[
p=\sum_{M\in \Gamma_1(N)\backslash \SL_2(\Z)}(F_\gamma|_{1-n/2}M)(h|_{1+n/2}M)
\]
is a weakly holomorphic modular form of weight $2$ for $\SL_2(\Z)$, so the constant term in its Fourier expansion at $\infty$ must vanish. We call $p$ the pairing of $F_\gamma$ with $h$. Note that the Fourier expansion of $F_\gamma|_{1-n/2}M$ can be computed using \eqref{eqn:ComponentsTransformation}. For $h$ we will choose eta quotients, so that $h|_{1+n/2}M$ can be computed using Propositions \ref{prop:etaTransformation} and \ref{prop:rescaledEta}. We can therefore write the constant coefficient $[p](0)$ of $p$ in terms of the Fourier coefficients of $F$ at negative powers of $q$. Since $[p](0)=0$, this gives conditions on these coefficients. To simplify the calculations we note that 
\[
p=\sum_{s\in P}\sum_{\substack{M\in\Gamma_1(N)\backslash\SL_2(\Z) \\ M\infty=s}}(F_\gamma|_{1-n/2}M)(h|_{1+n/2}M)
\]
where $P$ is the set of cusps for $\Gamma_1(N)$ and that a set of representatives for the cosets of $\Gamma_1(N)$ in $\SL_2(\Z)$ mapping $\infty$ to a cusp $a/c$ is given by $M_{a/c}T^j$ where $M_{a/c}$ is any matrix in $\SL_2(\Z)$ mapping $\infty$ to $a/c$ and $j=0,\ldots,t_c-1$ where $t_c=N/(c,N)$ is the width of $a/c$. Note that we can suppose that $M_{a/c}=\begin{psmallmatrix} a & b \\ c &d \end{psmallmatrix}$ satisfies $d=0\mod t_c$. Therefore
\[
p=\sum_{s\in P}t_c\cdot (F_\gamma|_{1-n/2}M_{a/c})(h|_{1+n/2}M_{a/c}),
\]
so we only need to compute one Fourier expansion of $F_\gamma$ and $h$ for each cusp of $\Gamma_1(N)$.

We give a detailed example of this for $n=8$ and $N=6$, while we only sketch the proof for the remaining cases, as they are very similar. If $\gamma\in D$, then we let $a_\gamma^d=|M_d\cap \gamma^\perp|$ and
\[
\delta_{\gamma,M_d}=\begin{cases}1 &\text{ if } \gamma\in M_d, \\
0&\text{ otherwise.}\end{cases}
\]
If $\gamma$ has order $d$, then we let $N_\gamma^{d,e}$ be the number of elements in $M_e$ that project onto $\gamma$ under the natural projection $D=D_d\oplus D^d\to D_d$.

\begin{itemize}
\item
If $n=8$ and $N=6$ then we let $\beta\in D$ be an element of order $2$ and norm $1/2\mod 1$ and let $h_1(\tau)=\eta(\tau)^6\eta(2\tau)^3\eta(3\tau)^2\eta(6\tau)^{-1}$. Then $h_1$ is a cusp form of weight $1+n/2=5$ and character $\overline{\chi}_\beta=\chi_\beta$ by Propositions \ref{prop:DHS} and \ref{prop:etaCuspForm}. The group $\Gamma_1(6)$ has four classes of cusps, namely $s=1/6,1/3,1/2$ and $1/1$. We choose the matrices $M_{a/c}$ as follows:
\[
M_{1/6}=\begin{pmatrix}
1 & 0 \\ 6 & 1 
\end{pmatrix} \quad M_{1/3}=\begin{pmatrix}
1 & 1 \\ 3 & 4
\end{pmatrix} \quad M_{1/2}=\begin{pmatrix}
1 & 1  \\ 2 & 3
\end{pmatrix} \quad M_{1/2}=\begin{pmatrix}
1 & 5 \\ 1 & 6
\end{pmatrix}
\]
Then \eqref{eqn:ComponentsTransformation} gives
\allowdisplaybreaks
\begin{align*}
F_\beta|_{-3}M_{1/6}&=F_\beta = \delta_{\beta,M_2}q^{-1/2}+O(q^{1/2}),\\
F_\beta|_{-3}M_{1/3}&=\xi(M_{1/3})\frac{\sqrt{|D_3|}}{\sqrt{|D|}}\sum_{\mu\in D^3}e(-\beta\mu)e(-\beta^2/2)F_{\beta+\mu}\\
&=-\epsilon_2\frac{1}{\sqrt{|D_2|}}\sum_{\mu\in D_2}e(\beta\mu)F_\mu\\
&=-\epsilon_2\frac{1}{\sqrt{|D_2|}}\left( c_1q^{-1}+\left(a_\beta^2-(c_2-a_\beta^2)\right)q^{-1/2}\right)+O(1)\\
&=-\epsilon_2\frac{1}{\sqrt{|D_2|}}\left(c_1q^{-1}+\left(2a_\beta^2-c_2\right)q^{-1/2}\right)+O(1), \\
F_\beta|_{-3}M_{1/2}&=\xi(M_{1/2})\frac{\sqrt{|D_2|}}{\sqrt{|D|}}\sum_{\mu\in D^2}e(-\beta\mu)e(-\beta^2/2)F_{\beta+\mu}\\
&=-i\epsilon_2\frac{1}{\sqrt{|D_3|}}\sum_{\mu\in \beta+D_3}F_{\mu}\\
&=-i\epsilon_2\frac{1}{\sqrt{|D_3|}}\left(\delta_{\beta,M_2}q^{-1/2}+N_\beta^{2,6} q^{-1/6}\right)+O(q^{1/6})\\
F_\beta|_{-3}M_{1/1}&=\xi(M_{1/1})\frac{\sqrt{|D_1|}}{\sqrt{|D|}}\sum_{\mu\in D^1}e(-\beta\mu)e(-\beta^2/2)F_{\beta+\mu}\\
&=i\frac{1}{\sqrt{|D|}}\sum_{\mu\in D}e(\beta\mu)F_\mu\\
&=i\frac{1}{\sqrt{|D|}}\left(c_1q^{-1}+(2a_\beta^2-c_2)q^{1/2}+c_3q^{-1/3}+(2a_\beta^6-c_6)q^{-1/6}\right)+O(1).
\end{align*}
The Fourier expansions of $h_1$ can be computed using Propositions \ref{prop:etaTransformation} and \ref{prop:rescaledEta} and are given by
\begin{align*}
h_1|_5M_{1/6}&=h_1=\eta(\tau)^6\eta(2\tau)^3\eta(3\tau)^2\eta(6\tau)^{-1}=q^{1/2}+O(q^{3/2}),\\
h_1|_5M_{1/3}&=-\frac{1}{2}\eta(\tau)^6\eta(\tau/2)^3\eta(3\tau)^2\eta(3\tau/2)^{-1}=-\frac{1}{2}q^{1/2}+\frac{3}{2}q+O(q^{3/2}),\\
h_1|_5M_{1/2}&=-i\frac{\sqrt{3}}{3}\eta(\tau)^6\eta(2\tau)^3\eta(\tau/3)^2\eta(2\tau/3)^{-1}=-i\frac{\sqrt{3}}{3}\left(q^{1/2}-2q^{5/6}\right)+O(q^{3/2}),\\
h_1|_5M_{1/1}&=i\frac{\sqrt{3}}{6}\eta(\tau)^6\eta(\tau/2)^3\eta(\tau/3)^2\eta(\tau/6)^{-1}=i\frac{\sqrt{3}}{6}\left(q^{1/3}+q^{1/2}-2q^{5/6}-3q\right)+O(q^{4/3}).
\end{align*}
Therefore the constant coefficient of the pairing of $F_\beta$ with $h_1$ is given by
\begin{align*}
0=\delta_{\beta,M_2}+\epsilon_2\frac{1}{\sqrt{|D_2|}}\left(-3c_1+2a_\beta^2-c_2\right)-\epsilon_2\frac{\sqrt{3}}{\sqrt{|D_3|}}\delta_{\beta,M_2}-\frac{\sqrt{3}}{\sqrt{|D|}}\left(-3c_1+2a_\beta^2-c_2+c_3\right)
\end{align*}
which is equivalent to
\begin{equation}\label{eqn:eightSixA}
a_\beta^2=\frac{3}{2}c_1+\frac{1}{2}c_2-\epsilon_2\frac{\sqrt{|D_2|}}{2}\delta_{\beta,M_2}-\frac{1}{2}\left(1-\epsilon_2\frac{\sqrt{|D_3|}}{\sqrt{3}}\right)^{-1}c_3.
\end{equation}
If we choose $\beta\in M_2$, then we obtain $a_\beta^2=1$ for all remaining lattices. The function $h_2(\tau)=\eta(\tau)^7\eta(2\tau)^{-2}\eta(3\tau)^7\eta(6\tau)^{-2}$ is also a cusp form of weight $5$ and character $\chi_\beta$ for $\Gamma_1(6)$. The pairing of $F_\beta$ with $h_2$ can be computed similarly and one obtains the following condition.

\begin{align*}
0=&\delta_{\beta,M_2}-\epsilon_2\frac{8}{\sqrt{|D_2|}}c_1+\epsilon_2\frac{1}{3\sqrt{3}\sqrt{|D_3|}}N_\beta^{2,6}-\epsilon_2\frac{7}{3\sqrt{3}\sqrt{|D_3|}}\delta_{\beta,M_2}\\&-\frac{16}{3\sqrt{3}\sqrt{|D|}}\left(2a_\beta^2-c_2\right)
-\frac{8}{\sqrt{3}\sqrt{|D|}}c_1-\frac{8}{3\sqrt{3}\sqrt{|D|}}c_3,
\end{align*}
which gives
\begin{align*}
N_\beta^{2,6} = &\left(7-\epsilon_23\sqrt{3}\sqrt{|D_3|}\right)\delta_{\beta,M_2}+\frac{24}{\sqrt{|D_2|}}\left(\epsilon_2+\sqrt{3}\sqrt{|D_3|}\right)c_1\\
&+\epsilon_2\frac{8}{\sqrt{|D_2|}}c_3+\epsilon_2\frac{16}{\sqrt{|D_2|}}\left(2a_\beta^2-c_2\right).
\end{align*}
Letting $\beta\in M_2$ and inserting $a_\beta^2=1$ and the other values gives a negative value for $N_\beta^{2,6}$ in the cases where $n_3=3$, which is of course impossible. For $L_1=\II_{8,2}(2_{\II}^{+4}3^{-7})$ we obtain $N_\beta^{2,6} = 252$ and for $L_2=\II_{8,2}(2_{\II}^{+6}3^{-7})$ we obtain $N_\beta^{2,6}=0$. Next we let $\alpha\in D$ be an element of order $6$ and norm $1/6\mod 1$ and let $h_3(\tau)=\eta(\tau)^5\eta(2\tau)^4\eta(3\tau)^5\eta(6\tau)^{-4}$. Then $h_3$ is a cusp form of weight $5$ and character $\overline{\chi}_\alpha$ for $\Gamma_1(6)$ and we can compute the pairing of $F_\alpha$ with $h_3$ as above and obtain
\begin{equation}\label{eqn:eightSixB}
0=\delta_{\alpha,M_6}+\epsilon_2\frac{\sqrt{3}}{\sqrt{|D_3|}}\delta_{3\alpha,M_2}-\frac{48\sqrt{3}}{\sqrt{|D|}}c_1-\frac{8\sqrt{3}}{\sqrt{|D|}}(2a_\alpha^2-c_2)-\frac{2\sqrt{3}}{\sqrt{|D|}}\left(\frac{3}{2}a_\alpha^3-c_3\right).
\end{equation}
For both $L_1$ and $L_2$ there exists an $\alpha\in M_6$ with $3\alpha\notin M_2$, because
\[
\sum_{\beta\in M_2}N_\beta^{2,6}<c_6.
\]
For such an $\alpha$ we can compute $a_\alpha^2$ using \eqref{eqn:eightSixA} (note that $a_\alpha^2=a_{3\alpha}^2$) and obtain
\[
a_\alpha^2=\frac{3+c_2}{2}.
\]
Inserting this and the other values into \eqref{eqn:eightSixB} shows that
\[
0=1-\frac{72\sqrt{3}}{\sqrt{|D|}},
\]
i.e. $|D|=2^6\cdot 3^5$, which is not the case. This completes the proof for $n=8$ and $N=6$.
\item 
If $n=6$ and $N=6$, then we can compute the pairing of $F_\beta$ for $\beta\in D$ of order $2$ and norm $1/2\mod 1$ with $h_1(\tau)=\eta(\tau)^9\eta(2\tau)^{-3}\eta(3\tau)\eta(6\tau)$ and obtain 
\begin{align}\label{eqn:sixSixA}
0&=\delta_{\beta,M_2}+\epsilon_2\frac{12}{\sqrt{|D_2|}}c_1+\epsilon_2\frac{4}{\sqrt{|D_2|}}\left(2a_\beta^2-c_2\right)-\epsilon_2\frac{1}{\sqrt{|D_3|}}N_\beta^{2,6}+\epsilon_2\frac{1}{\sqrt{|D_3|}}\delta_{\beta,M_2} \nonumber \\
&+\frac{12}{\sqrt{|D|}}c_1-\frac{4}{\sqrt{|D|}}\left(2a_\beta^2-c_2\right)+\frac{4}{\sqrt{|D|}}c_3 \nonumber \\
=&\left(1+\epsilon_2\frac{1}{\sqrt{|D_3|}}\right)\left(\delta_{\beta,M_2}+\epsilon_2\frac{12}{\sqrt{|D_2|}}c_1+\epsilon_2\frac{4}{\sqrt{|D_2|}}\left(2a_\beta^2-c_2\right)\right)\\
&-\epsilon_2\frac{1}{\sqrt{|D_3|}}N_\beta^{2,6}-\frac{4}{\sqrt{|D|}}c_3. \nonumber 
\end{align}
We describe how this can be used to show that the case $L=\II_{6,2}(2_{\II}^{-4}3^{+4})$ with $c_2=1$ cannot occur. Let $\beta\notin M_2$. Then $a_\beta^2$ is 0 or 1. If it is 0, then $N_\beta^{2,6}=-2$, which is impossible, so $a_\beta^2=1$. In this case $N_\beta^{2,6}=14$. There are $N(D_2,1)-c_2=9$ such elements $\beta$. Therefore $c_6\geq 9\cdot 14$, which is a contradiction. With a similar argument one can eliminate the other cases, except those stated in the proposition and the case $L=\II_{6,2}(2_{\II}^{+6}3^{-4})$ with $c_2=5$. In this case we let $\beta\in M_2$. Then \eqref{eqn:sixSixA}
 gives
 \[
 N_\beta^{2,6} = 10 a_\beta^2-18.
 \]
 We then pair $F_\beta$ with $h_2=\eta(\tau)^4\eta(2\tau)^{-2}\eta(6\tau)^6$ which gives
 \begin{align}\label{eqn:sixSixB}
0=&\epsilon_2\frac{1}{\sqrt{|D_2|}}c_1+\epsilon_2\frac{1}{2\sqrt{|D_2|}}\left(2a_\beta^2-c_2\right)-\epsilon_2\frac{1}{9\sqrt{|D_3|}}N_\beta^{2,6}+\frac{1}{18\sqrt{|D|}}\left(2a_\beta^6-c_6\right) \nonumber \\
&+\frac{1}{\sqrt{|D|}}c_1+\frac{1}{2\sqrt{|D|}}\left(2a_\beta^2-c_2\right)-\frac{1}{3\sqrt{|D|}}c_3 \nonumber \\
=&\left(\epsilon_2+\frac{1}{\sqrt{|D_3|}}\right)\left(\frac{1}{\sqrt{|D_2|}}c_1+\frac{1}{2\sqrt{|D_2|}}\left(2a_\beta^2-c_2\right)\right)-\epsilon_2\frac{1}{9\sqrt{|D_3|}}N_\beta^{2,6}\\
&+\frac{1}{18\sqrt{|D|}}\left(2a_\beta^6-c_6\right)-\frac{1}{3\sqrt{|D|}}c_3, \nonumber
\end{align}
In our case this simplifies to
\[
180a_\beta^2-16N_\beta^{2,6}+2a_\beta^6=552.
\]
Using that $N_\beta^{2,6} = 10a_\beta^2-18$ this gives
\[
10a_\beta^2+a_\beta^6=132,
\]
which is impossible because $a_\beta^2\leq c_2=5$ and $a_\beta^6\leq c_6=66$.
\item  For $n=6$ and $N=10$ we let $\beta\in D$ have order $2$ and norm $1/2\mod 1$ and we compute the pairing of $F_\beta$ with $h(\tau)=\eta(\tau)^7\eta(5\tau)$. This gives
\[
0=2\delta_{\beta,M_2}-\epsilon_2\frac{4}{\sqrt{|D_2|}}(2a_\beta^2-c_2)-\epsilon_2\frac{2\sqrt{5}}{\sqrt{|D_5|}}\delta_{\beta,M_2}+\frac{4\sqrt{5}}{\sqrt{|D|}}(2a_\beta^2-c_2)
\]
which gives
\[
a_\beta^2=\epsilon_2\frac{\sqrt{|D_2|}}{4}\delta_{\beta,M_2}+\frac{1}{2}c_2.
\]
In the cases we are considering $c_2\neq N(D_2,1)$, so we can choose $\beta\notin M_2$. Then $a_\beta^2=c_2/2$, so $c_2$ must be even, giving a contradiction.
\item For $n=4$ and $N=6$ the only possible lattice is $L=\II_{4,2}(2_{\II}^{-4}3^{-3})$. We let $\beta\in D$ have order $2$ and norm $1/2\mod 1$ and pair $F_\beta$ with $h_1(\tau)=\eta(\tau)^3\eta(3\tau)^3$ to obtain
\begin{equation}\label{eqn:fourSixA}
0=\frac{4}{3}\delta_{\beta,M_2}+\frac{4}{3}a_\beta^2-\frac{2}{3}c_2-\frac{1}{9}N_\beta^{2,6}-\frac{1}{9}a_\beta^6+\frac{1}{18}c_6.
\end{equation}
If $c_2=1$, then $c_6=0$ and therefore $a_\beta^6$ and $N_\beta^{2,6}$ are also $0$. Then
\[
a_\beta^2=\frac{1}{2}-\delta_{\beta,M_2}\notin\Z
\]
which is not possible. Therefore $c_2=\delta_{\beta,M_2}=a_\beta^2=0$.
\begin{itemize}
\item If $c_3=c_6=4$, then
\[N_\beta^{2,6}=\frac{1}{2}c_6-a_\beta^6=2-a_\beta^6,
\]
by \eqref{eqn:fourSixA}. Then $N_\beta^{2,6}\leq 1$ because $N_\beta^{2,6}\leq a_\beta^6$. But $N_\beta^{2,6}$ must be even because if $\alpha\in M_6$ projects onto $\beta$, then so does $-\alpha$, which must also be in $M_6$ because $F_\alpha=F_{-\alpha}$. Therefore $N_\beta^{2,6}=0$. Since this holds for any $\beta\in D$ of order $2$ and norm $1/2\mod 1$, it follows that $c_6=0$, which is a contradiction.
\item If $c_3=2$ and $c_6=8$, then we let $\gamma\in M_3$. Then $M_3=\{\pm\gamma\}$, so $\gamma^\perp\cap M_3=\emptyset$, i.e. $a_\gamma^3=0$. Computing the pairing of $F_\gamma$ with $h_2(\tau)=\eta(\tau)^2\eta(2\tau)^3\eta(3\tau)^2\eta(6\tau)^{-1}$ yields
\begin{align*}
0&=\frac{5}{4}\delta_{\gamma,M_3}-\frac{5}{8}a_\gamma^3+\frac{5}{24}c_3-\frac{1}{8}a_\gamma^6+\frac{1}{24}c_6\\
&=2-\frac{1}{8}a_\gamma^6,
\end{align*}
i.e. $a_\gamma^6 = 16$, which is impossible because $a_\gamma^6$ cannot be larger than $c_6$.
\item If $c_3=0$ and $c_6=12$, then \eqref{eqn:fourSixA} gives
\[
N_\beta^{2,6}=\frac{1}{2}c_6-a_\beta^6 = 6-a_\beta^6.
\]
As $N_\beta^{2,6}\leq a_\beta^6$ and even, it must be $0$ or $2$. It follows that there are six elements $\beta_1,\ldots,\beta_6\in D$ of order $2$ and norm $1/2\mod 1$ with $N_{\beta_i}^{2,6}=2$ and $a_{\beta_i}^6 = 4$. Hence, each of the $\beta_i$ is orthogonal to itself and exactly one of the others. Next we let $\alpha\in D$ have order $6$ and norm $1/6\mod 1$. Computing the pairing of $F_\alpha$ with $h_3(\tau)=\eta(\tau)^{10}\eta(2\tau)^{-3}\eta(3\tau)^{-2}\eta(6\tau)$ gives
\[
0=\delta_{\alpha,M_6}+\frac{3}{2}N_{3\alpha}^{\perp}-1-2M_{4\alpha}^\perp+N_{4\alpha}^{3,6}
\]
where $N_{3\alpha}^\perp$ is the number of elements in $M_6\cap (4\alpha)^\perp$ that project onto $3\alpha$ under the natural projection $D=D_2\oplus D_3\to D_2$ and $M_{4\alpha}^\perp$ is the number of elements in $M_6\cap (3\alpha)^\perp$ that project onto $4\alpha$ under $D=D_2\oplus D_3\to D_3$.

Suppose there is an element $\delta\in D$ of order $3$ and norm $2/3\mod 1$ such that $\delta+\beta\notin M_6$ for all $\beta\in D$ of order $2$ and norm $1/2\mod 1$. Then we let $\alpha_1=\delta+\beta_1$. Then $4\alpha_1=\delta$, so both $\delta_{\alpha_1,M_6}$ and $N_{4\alpha_1}^{3,6}$ are $0$. Then of course $M_{4\alpha_1}^\perp$ must also be $0$, so $N_{3\alpha_1}^\perp=2/3$, which is not an integer and therefore not possible.

There is therefore no such $\delta$. Since there are $N(D_3,2)=12=c_6$ elements of order $3$ and norm $2/3\mod 1$ in $D$, it follows that for every element $\delta$ of order $3$ and norm $2/3\mod 1$ there is exactly one $\beta\in D$ of order $2$ and norm $1/2\mod 1$ with $\beta+\delta\in M_6$. Now let $\alpha\in M_6$. Then $N_{4\alpha}^{3,6}=1$ as just explained, i.e. there is exactly one element in $M_6$ which projects onto $4\alpha$ under $D\to D_3$. This element is of course $\alpha$, which is  orthogonal to $3\alpha$, so $M_{4\alpha}^\perp=1$. Then again $N_{3\alpha}^\perp=2/3$, again giving a contradiction. This proves that this case cannot occur.
\end{itemize}
\item If $n=4$ and $N=15$, then $L=\II_{4,2}(3^{+3}5^{-5})$ and $c_3=4$, so there exists a $\gamma\in M_3$. Pairing $F_\gamma$ with $h(\tau)=\eta(3\tau)\eta(5\tau)^7\eta(15\tau)^{-2}$ gives
\[
0=\frac{376}{125}-\frac{36}{125}a_\gamma^3,
\]
which is a contradiction, because $a_\gamma^3$ must be an integer.
\item For $n=4$ and $N=30$ we let $\beta\in D$ have order $2$ and norm $1/2\mod 1$. Pairing $F_\beta$ with 
\[h(\tau)=\eta(\tau)^3\eta(2\tau)^{-1}\eta(5\tau)^2\eta(6\tau)\eta(10\tau)
\eta(15\tau)\eta(30\tau)^{-1}
\]
gives
\begin{align*}
0=&\left(4-\epsilon_5\frac{8\sqrt{5}}{5\sqrt{|D_5|}}+\frac{4\sqrt{15}}{5\sqrt{|D_{15}|}}\right)\delta_{\beta,M_2}-\left(\frac{8}{\sqrt{|D_2|}}-\epsilon_5\frac{8\sqrt{3}}{\sqrt{|D_6|}}+\epsilon_5\frac{8\sqrt{5}}{\sqrt{|D_{10}|}}+\frac{24\sqrt{15}}{5\sqrt{|D|}}\right)c_1\\
&+\frac{16\sqrt{15}}{5\sqrt{|D|}}c_3-\frac{4\sqrt{15}}{5\sqrt{|D_{15}|}}N_\beta^{2,6}+\frac{4\sqrt{15}}{5\sqrt{|D_{15}|}}N_\beta^{2,10}-\left(\epsilon_5\frac{8\sqrt{5}}{5\sqrt{|D_{10}|}}+\frac{8\sqrt{15}}{5\sqrt{|D|}}\right)c_5\\
&+\left(\frac{8\sqrt{15}}{5\sqrt{|D|}}-\epsilon_5\frac{24\sqrt{5}}{5\sqrt{|D_{10}|}}\right)\left(2a_\beta^2-c_2\right)-\frac{8\sqrt{15}}{5\sqrt{|D|}}\left(2a_\beta^6-c_6\right)
\end{align*}
Suppose $L=\II_{4,2}(2_{\II}^{+4}3^{-3}5^{+3})$. Since $c_2=3\neq 0$, we can let $\beta\in M_2$. Then the condition simplifies to
\[
a_\beta^2=\frac{33}{4}+\frac{1}{8}N_\beta^{2,10}-\frac{1}{8}N_\beta^{2,6}-\frac{1}{8}a_\beta^6.
\]
Note that $N_\beta^{2,6}$ is at most equal to the number of elements of order $3$ and norm $2/3\mod 1$ in $D$, i.e. $N_\beta^{2,6}\leq N(D_3,2)=12$ and $a_\beta^6\leq c_6=36$. Therefore $a_\beta^2\geq 3$. Since $a_\beta^2\leq c_2=3$, it follows that $a_\beta^2=3$. Since this holds for any $\beta\in M_2$, the three elements in $M_2$ are pairwise orthogonal. However, one easily checks that it is not possible to choose three pairwise orthogonal elements of order $2$ and norm $1/2\mod 1$ in $2_{\II}^{+4}$. Therefore, this case cannot occur.
A similar argument also works in the other remaining cases.
\end{itemize}
\end{proof}

\begin{proposition}\label{prop:nonSymmetricIsLift}
Let $L$ be an even lattice of squarefree level $N$ and signature $(n,2), n\geq 4$ such that $L$ splits $\II_{1,1}\oplus \II_{1,1}(N)$ and $F$ a strongly reflective modular form on $D=L'/L$ with $[F_0](0)=n-2$. If $F$ is not symmetric, then there exists an isotropic subgroup $H\subset D$ such that $F$ is the lift of a symmetric strongly reflective modular form $F_H$ for $D_H=H^\perp/H$ with $[(F_H)_0](0)=n-2$ on $H$.
\end{proposition}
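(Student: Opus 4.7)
The plan is to proceed case by case through the finite list of non-symmetric possibilities enumerated in the preceding proposition. For each such $(L,(c_d))$ I would exhibit an isotropic subgroup $H \subset D = L'/L$ together with a symmetric strongly reflective form $F_H$ on $D_H = H^\perp/H$ drawn from Proposition \ref{prop:classificationSymmetric}, and then verify $F = \hat F_H$, where $\hat F_H$ is the lift defined at the end of Section 3. Since $M_k(\rho_D) = 0$ for the weight $k = 1 - n/2 \leq -1$ at hand, two weakly holomorphic modular forms of this weight coincide as soon as their principal parts do, so the entire task reduces to comparing principal parts.

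The candidate pair $(H,F_H)$ is forced by the data. In every entry of the list, comparison of the given $c_d$ with the $c_d$'s attached to the symmetric forms in Table \ref{tabIntro} reveals a unique symmetric form $F_H$ on a smaller discriminant form $D''$ for which $c_d(F) = |H|\cdot c_d(F_H)$ holds for all $d\mid N$, with $|H|^2 = |D|/|D''|$. For instance, for $L = \II_{10,2}(2_{\II}^{+2n})$ with $c_2 = 2^{n-1}$ the target is $L'' = \II_{10,2}(2_{\II}^{+2})$, $|H| = 2^{n-1}$, $c_2(F_H) = 1$; for $L = \II_{8,2}(3^{-7})$ with $c_3 = 54$ the target is $L'' = \II_{8,2}(3^{-3})$, $|H| = 9$, $c_3(F_H) = 6$; the remaining cases are analogous. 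The existence of an isotropic subgroup $H \subset D$ with $H^\perp/H \cong D''$ is then elementary discriminant-form arithmetic, as the numerical invariants prescribed by Section 2 match up in every instance.

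The remaining point is that the principal part of $\hat F_H$ genuinely matches that of $F$: one needs (a) $F_\gamma$ holomorphic at $\infty$ for $\gamma \notin H^\perp$, and (b) for each $d\mid N$, the set $M_d$ is a disjoint union of $H$-cosets inside $H^\perp$ mapping bijectively modulo $H$ onto the pole set of $F_H$, with $k\gamma \notin H$ for $1 \leq k < d$ whenever $\gamma \in M_d$ (so that the order of $\gamma + H$ in $D_H$ is still $d$). Both (a) and (b) can be extracted by the pairing technique of the preceding proposition: testing individual components $F_\gamma$ against suitably chosen eta-quotient cusp forms of weight $1+n/2$ and character $\overline\chi_\gamma$ for $\Gamma_1(N)$ and reading off the incidence numbers $a^d_\gamma$, $N^{d,e}_\gamma$, $\delta_{\gamma,M_d}$ from the vanishing of the constant Fourier coefficient of the resulting weight-$2$ form on $\SL_2(\Z)$. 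The main obstacle is the book-keeping: each case requires several pairings and a careful sorting of the resulting linear relations to pin down the coset structure of the $M_d$, but no new technique is required beyond those already used in the preceding proposition. Once this is done, $F = \hat F_H$ follows from the principal-part argument, the induced $F_H$ on $D_H$ is symmetric and strongly reflective by construction, and the equality $[(F_H)_0](0) = [F_0](0) = n-2$ is immediate from the definition of the lift at the trivial coset.
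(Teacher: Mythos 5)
Your proposal follows essentially the same route as the paper's proof: case-by-case treatment of the list from the preceding proposition, pairings of the components $F_\gamma$ with eta-quotient cusp forms of weight $1+n/2$ to pin down the orthogonality and coset structure of the sets $M_d$, and a principal-part comparison (valid since the weight $1-n/2$ is negative) to conclude $F=\hat{F}_H$. The one step you treat too lightly is the choice of $H$: it is not an abstractly chosen isotropic subgroup with the right invariants (for such an $H$ your condition (b) would generally fail) but the specific subset forced by $F$, e.g.\ the isotropic elements of $D_2$ orthogonal to $M_2$, and proving that this subset is closed under subtraction — hence a subgroup — requires further pairings and is a genuine piece of the paper's argument rather than elementary discriminant-form arithmetic.
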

\begin{proof}
We already know that $L$ and the numbers $c_d$ are one of those from the previous proposition. For prime level the claim follows from Theorem 6.27 in \cite{ScheithauerSingular}, so we can assume that the level $N$ is 6 or 14. 
\begin{itemize}
\item Suppose the level is $6$. If $c_2>0$, then $L=\II_{6,2}(2_{\II}^{+n_2}3^{-4})$ for $n_2=4$ or $n_2=6$. Let $\beta\in M_2$. Using that $c_1=c_3=0$ and $c_2=2^{n_2/2-1}$, \eqref{eqn:sixSixA} gives 
\[
N_\beta^{2,6} = \frac{80}{2^{n_2/2}}a_\beta^2- 10.
\]
We then pair $F_\beta$ with $h_2(\tau)=\eta(\tau)^4\eta(2\tau)^{-2}\eta(6\tau)^6$. The resulting condition was already stated in \eqref{eqn:sixSixB}. Using that $c_6=30\cdot 2^{n_2/2-1}$, this gives
\[
N_\beta^{2,6}=\frac{90}{2^{n_2/2}}a_\beta^2+\frac{1}{2^{n_2/2}}a_\beta^6-30.
\] 
These two conditions together yield
\[
N_\beta^{2,6}=150-\frac{8}{2^{n_2/2}}a_\beta^6\geq 30
\]
because $a_\beta^6\leq c_6=30\cdot 2^{n_2/2-1}$. As there are only $N(D_3,2)=30$ elements of order $3$ and norm $2/3\mod 1$, it follows that $N_\beta^{2,6} = 30$ and therefore $a_\beta^2=c_2$ and $a_\beta^6=c_6$, i.e. the elements in $M_2$ are pairwise orthogonal and 
\[
M_6=M_2+A_{2/3},
\]
where $A_{2/3}$ is the set of elements of order $3$ and norm $2/3\mod 1$ in $D$. Next, we let $H$ be the set of isotropic elements in $D_2$ that are orthogonal to $M_2$. Let $\beta\in M_2$ and $\mu\in H$. Then $\beta+\mu$ obviously has order $2$ and norm $1/2\mod 1$, but it is also in $M_2$: Suppose it were not. Then \eqref{eqn:sixSixA} gives $N_{\beta+\mu}^{2,6}=20$, which is impossible because $M_6=M_2+A_{2/3}$. So $\beta+\mu\in M_2$ and $\beta+H\subset M_2$. But the other inclusion $M_2\subset \beta+H$ is also true: Let $\beta'\in M_2$. Then $\beta'-\beta$ is isotropic (because $\beta'$ and $\beta$ are orthogonal) and in $M_2^\perp$ (because $\beta'$ and $\beta$ are). Hence $M_2=\beta+H$. We show that $H$ is a group: Let $\mu_1,\mu_2\in H$ and write $\mu_1=\beta_1-\beta$ and $\mu_2=\beta_2-\beta$ with $\beta_1,\beta_2\in H$. Then $\mu_1-\mu_2=\beta_1-\beta_2$ is isotropic and orthogonal to $M_2$ and hence in $H$. On $D_H=H^\perp/H=2^{+2}3^{-4}$ there is a symmetric strongly reflective modular form $F_H$ with $[(F_H)_0](0)=4$, which can be lifted on $H$. Then the principal parts of the lift of $F_H$ and $F$ coincide, so $F$ must be the lift of $F_H$.

If $c_3>0$, then $L=\II_{6,2}(2_{\II}^{-4}3^{\epsilon_3n_3})$ where $\epsilon_3n_3=+4$ or $-6$, $c_3=2\cdot 3^{n_3/2-1}$ and $c_6=20\cdot 3^{n_3/2-1}$. Let $\gamma\in M_3$. Pairing $F_\gamma$ with $\eta(\tau)^8$ shows that
\begin{equation}\label{eqn:sixSixC}
0=\delta_{\gamma,M_3}+\epsilon_2\frac{2}{\sqrt{|D_2|}}\delta_{\gamma,M_3}-\epsilon_2\frac{3}{\sqrt{|D_3|}}\left(\frac{3}{2}a_\gamma^3-\frac{1}{2}c_3\right)-\frac{6}{\sqrt{|D|}}\left(\frac{3}{2}a_\gamma^3-\frac{1}{2}c_3\right),
\end{equation}
which in this case yields $a_\gamma^3=0$, i.e. no element in $M_3$ is orthogonal to $\gamma$. Next we pair $\gamma$ with $\eta(\tau)^{-1}\eta(2\tau)^9\eta(3\tau)^3\eta(6\tau)^{-3}$. Then
\begin{align*}
0=&\delta_{\gamma,M_3}-\epsilon_2\frac{1}{4\sqrt{|D_2|}}\delta_{\gamma,M_3}-\epsilon_2\frac{9}{\sqrt{|D_3|}}c_1+\frac{9}{\sqrt{|D|}}c_1-\frac{9}{2\sqrt{|D|}}c_2\\
&-\frac{9}{4\sqrt{|D|}}\left(\frac{3}{2}a_\gamma^3-\frac{1}{2}c_3\right)-\frac{3}{4\sqrt{|D|}}\left(\frac{3}{2}a_\gamma^6-\frac{1}{2}c_6\right), 
\end{align*}
which shows that in our case $a_\gamma^6=c_6$, i.e. $M_3\perp M_6$. Next let $\delta\in D$ have order $3$ and norm $2/3\mod 1$. Pairing $F_\delta$ with $\eta(\tau)^9\eta(2\tau)^{-1}\eta(3\tau)^{-3}\eta(6\tau)^3$ shows that
\begin{equation}\label{eqn:sixSixD}
0=\left(\epsilon_2-\frac{1}{\sqrt{|D_2|}}\right)\left(\frac{18}{\sqrt{|D_3|}}c_1+\frac{3}{\sqrt{|D_3|}}\left(\frac{3}{2}a_\delta^3-\frac{1}{2}c_3\right)\right)-\epsilon_2\frac{1}{\sqrt{|D_2|}}N_\delta^{3,6}+\frac{9}{\sqrt{|D|}}c_2.
\end{equation}
If $N_\delta^{3,6}\neq 0$, then $a_\delta^3=c_3$ because $M_3\perp M_6$. In this case \eqref{eqn:sixSixD} gives $N_\delta^{3,6}=10$. This proves that there are $c_6/10=c_3$	elements $\delta$ of order $3$ and norm $2/3\mod 1$ with $N_\delta^{3,6}=10$, while $N_\delta^{3,6}=0$ for all other $\delta$. We let $H$ be the set of isotropic elements in $D_3\cap M_3^\perp$. Let $\gamma\in M_3$ and $\mu\in H$. Then $\gamma+\mu$ obviously has order $3$ and norm $1/3\mod 1$ and $(\gamma+\mu)^\perp\cap M_3=\emptyset$. Applying \eqref{eqn:sixSixC} to $\gamma+\mu$ shows that this is only possibly if $\gamma+\mu\in M_3$. Now let $\gamma_1,\gamma_2\in M_3$. Since these are not orthogonal, exactly one of $\gamma_1+\gamma_2$ and $\gamma_1-\gamma_2$ is isotropic. By replacing $\gamma_2$ with $-\gamma_2$ if necessary, we can assume that $\mu=\gamma_1-\gamma_2$ is isotropic. Pairing $\mu$ with $\eta(\tau)^2\eta(2\tau)^2\eta(3\tau)^2\eta(6\tau)^2$ gives
\begin{align*}
0=&\epsilon_2\frac{1}{\sqrt{|D_3|}}c_1-\epsilon_2\frac{1}{3\sqrt{|D_3|}}\left(\frac{1}{2}a_\mu^3-\frac{1}{2}c_3\right)-\frac{1}{\sqrt{|D|}}c_1+\frac{1}{2\sqrt{|D_2|}}c_2\\
&+\frac{1}{3\sqrt{|D|}}\left(\frac{3}{2}a_\mu^3-\frac{1}{2}c_3\right)-\frac{1}{6\sqrt{|D|}}\left(\frac{3}{2}a_\mu^6-\frac{1}{2}c_6\right),
\end{align*}
which, using that $\mu\in M_6^\perp$, gives $a_\mu^3=c_3$, i.e. $\mu\in M_3^\perp$. This proves that $\mu\in H$ and that $M_3=\pm \gamma+H$ for any $\gamma\in M_3$. Next we show that
\[
M_6=\pm\delta +A_{1/2}+H
\]
where $\delta$ is any element of order $3$ and norm $2/3\mod 1$ with $N_\delta^{3,6}=10$ and $A_{1/2}$ is the set of elements of order $2$ and norm $1/2\mod 1$ in $D$. First note that since $|A_{1/2}|=N(D_2,1)=10$ and $|H|=c_3=2\cdot 3^{n_3/2-1}$ both sides have the same size. It therefore suffices to show that the right hand side is contained in the left. We let $S$ be the set of elements $\delta\in D$ of order $3$ and norm $2/3\mod 1$ with $N_\delta^{3,6}=10$. Since $|A_{1/2}|=N_\delta^{3,6}$, we only have to show that $\pm\delta+H\subset S$. Let $\mu\in H$. We have seen that $M_3=\pm\gamma+H$ for $\gamma\in M_3$, so every element in $H$ is the sum of two elements in $M_3$. Since $M_3\perp M_6$, this shows that $H\subset M_6^\perp$, so in particular $\mu\in M_6^\perp$. Therefore $\mu\delta=0\mod 1$ and $\pm\delta+\mu$ has norm $2/3\mod 1$ for all $\delta\in S$. By definition $\mu$ is also orthogonal to $M_3$, so that $\pm\delta+\mu$ is orthogonal to $M_3$. Then \eqref{eqn:sixSixD} gives $N_{\pm\delta+\mu}^{3,6} = 10$, so $\pm\delta+\mu\in S$, which is what we wanted to show. Finally we prove that $H$ is a group: Let $\mu_1,\mu_2\in H$. Then obviously $\mu_1-\mu_2\in M_3^\perp$, so we only have to show that $\mu_1-\mu_2$ is isotropic. Suppose it is not. Then it has norm $1/3\mod 1$ or $2/3\mod 1$. By replacing $\mu_2$ with $-\mu_2$ if necessary, we can assume that $\mu_1-\mu_2$ has norm $2/3\mod 1$. Then \eqref{eqn:sixSixD} shows that it is in $S$ and therefore not in $M_6^\perp$, which is impossible. Therefore $\mu_1-\mu_2$ is isotropic and hence in $H$, and $H$ is indeed a group. That $F$ must then be a lift of a symmetric strongly reflective form on $D_H=H^\perp/H$ can be seen as in the case with $c_2>0$.

\item If $N=14$, then $L=\II_{4,2}(2_{\II}^{+4}7^{-3})$. Let $\beta\in M_2$. Pairing $F_\beta$ with $h_1(\tau)=\eta(\tau)^5\eta(7\tau)$ gives
\[
0=3-6\frac{1}{\sqrt{|D_2|}}(2a_\beta^2-c_2)-3\frac{\sqrt{7}}{\sqrt{|D_7|}}+6\frac{\sqrt{7}}{\sqrt{|D|}}(2a_\beta^2-c_2),
\]
which in our case gives
$
a_\beta^2=2,
$
i.e. the two elements in $M_2$ are orthogonal. We can also pair $F_\beta$ with $h_2(\tau)=\eta(\tau)^{-2}\eta(2\tau)^7\eta(7\tau)^2\eta(14\tau)$, which gives
\begin{align*}
0=&3-\frac{3}{4\sqrt{|D_2|}}\left(2a_\beta^2-c_2\right)+\frac{21}{4\sqrt{|D_2|}}c_1+\frac{3\sqrt{7}}{\sqrt{|D_7|}}-\frac{3\sqrt{7}}{4\sqrt{|D|}}\left(2a_\beta^{14}-c_{14}\right)\\
&-\frac{3\sqrt{7}}{4\sqrt{|D|}}c_7-\frac{3\sqrt{7}}{4\sqrt{|D|}}\left(2a_\beta^2-c_2\right)+\frac{21\sqrt{7}}{4\sqrt{|D|}}c_1.
\end{align*}
Inserting the known values gives
$
a_\beta^{14}=112=c_{14}$,
i.e. $\beta$ is orthogonal to $M_{14}$. Since this holds for both $\beta\in M_2$, we obtain $M_2\perp M_{14}$. There are exactly $112$ elements of order $14$ and norm $1/14\mod 1$ in $D$ that are orthogonal to $M_2$, namely those in $M_2+A_{4/7}$, where $A_{4/7}$ is the set of elements of order $7$ and norm $4/7\mod 1$ in $D$. Let $H=\{0,\beta_1-\beta_2\}$, where $\beta_1$ and $\beta_2$ are the two elements in $M_2$. Then $M_2=\beta_1+H$ and $M_{14}=\beta_1+H+A_{4/7}$. As in the previous cases it then follows that $F$ is the lift of the unique symmetric strongly reflective modular form $F_H$ with $[(F_H)_0](0)=2$ on $D_H=H^\perp/H$. 
\end{itemize}
\end{proof}
We can now prove Theorem \ref{thm2}.
\begin{proof}[Proof of Theorem \ref{thm2}]
By \ref{thm:converse}, the function $\Psi$ is the image of some strongly reflective modular form $F$ on $D=L'/L$. If $F$ is symmetric, then the result follows from Proposition \ref{prop:classificationSymmetric}. If $F$ is not symmetric, then Proposition \ref{prop:nonSymmetricIsLift}
 shows that $F$ is the lift of a symmetric strongly reflective form $F'$ on some smaller discriminant form $D_H=H^\perp/H$. In all cases $D_H$ can be realised as $N'/N$ for some sublattice $N\subset L$ as described after Theorem \ref{thm:133} and $\Psi$ is therefore the theta lift of $F'$.
\end{proof}

\bibliographystyle{amsalpha}
\bibliography{bibtex}
\end{document}